\author{Guillaume Blanc\thanks{École Polytechnique Fédérale de Lausanne, \href{mailto:guillaume.blanc@epfl.ch}{guillaume.blanc@epfl.ch}; \url{https://sites.google.com/view/guillaume-blanc-math}} \and Alice Contat\thanks{Université Sorbonne Paris Nord, \href{mailto:alice.contat@math.cnrs.fr}{alice.contat@math.cnrs.fr}; \url{https://sites.google.com/view/acontat/}}}
\title{Random burning of the Euclidean lattice}
\date{}
\begin{document}

\theoremstyle{plain}
\newtheorem{thm}{Theorem}
\newtheorem*{thm*}{Theorem}
\newtheorem{prop}{Proposition}
\newtheorem{lem}{Lemma}
\newtheorem{cor}{Corollary}
\newtheorem{claim}{Claim}
\newtheorem{conj}{Conjecture}

\theoremstyle{remark}
\newtheorem{rem}{Remark}

\maketitle

\begin{abstract}
The burning number of a graph is the minimal number of steps that are needed to burn all of its vertices, with the following burning procedure: at each step, one can choose a point to set on fire, and the fire propagates constantly at unit speed along the edges of the graph.
In this paper, we consider two natural \emph{random} burning procedures in the discrete Euclidean torus $\mathbb{T}_n^d$, in which the points that we set on fire at each step are random variables.
Our main result deals with the case where at each step, the law of the new point that we set on fire conditionally on the past is the uniform distribution on the complement of the set of vertices burned by the previous points.
In this case, we prove that as $n\to\infty$, the corresponding random burning number (i.e, the first step at which the whole torus is burned) is asymptotic to $T\cdot n^{d/(d+1)}$ in probability, where $T=T(d)\in(0,\infty)$ is the explosion time of a so-called generalised Blasius equation.
\end{abstract}

\section*{Introduction and main results}
The \emph{burning number} $b(G)$ of a finite graph $G$ is the minimal number of steps that are needed to burn all of its vertices, with the following burning procedure: at each step $i\in\mathbb{N}^*$, one can choose a vertex $x_i\in G$ to set on fire, and the fire propagates constantly at unit speed along the edges of the graph.
Formally, we have\footnote{Throughout the paper, we use the notation $\mathbb{N}=\{0,1,\ldots\}$ and $\mathbb{N}^*=\{1,2,\ldots\}$.}
\[b(G):=\inf\left\{k\in\mathbb{N}^*:\text{there exists $x_1,\ldots,x_k\in G$ such that $\bigcup_{i=1}^k\overline{B}(x_i,k-i)=G$}\right\},\]
where $ \overline{B}(x,r)$ denotes the closed ball of radius $r$ around the vertex $x$ in the graph $G$.
This graph parameter first appeared in a paper by Alon \cite{alon}, who presented it as a communication problem originating from Intel, and was reintroduced more recently under that name by Bonato, Janssen and Roshanbin \cite{bonatojanssenroshanbin}, with motivation arising from social network analysis.
It has then received a lot of attention from the graph-theoretical point of view, in particular due to the resistance of the so-called burning number conjecture \cite{bessybonatojanssenrautenbachroshanbin,landlu,bastidebonamybonatocharbitkamalipierronrabie,norinturcotte}.
On the probabilistic side, the burning number was first considered by Mitsche, Pra\l{}at and Roshanbin \cite{mitschepralatroshanbin}, who in particular estimated the burning number of some large random graphs; in the same vein, let us also mention the recent work of Devroye, Eide and Pra\l{}at \cite{devroyeeidepralat}, who estimated the burning number of large Bienaymé trees.
In addition, Mitsche, Pra\l{}at and Roshanbin \cite{mitschepralatroshanbin} studied the asymptotic behaviour of some natural random burning procedures in the Euclidean lattice: \emph{this is the focus of our paper}.
These random burning procedures can be thought of as greedy constructions of nearly-optimal burning sequences.
Such greedy procedures are often mathematically interesting to analyse (see, e.g, \cite{greedylocal} for the case of the independence number), and provide a bound on the corresponding graph parameter (in our case, an upper bound on the burning number) which often turns out to be of the same order of magnitude.
Our main result (Theorem \ref{thm:rejectionsampling} below) treats the case of the burning number in the Euclidean lattice, thereby complementing \cite[Theorem 1.4.(ii)]{mitschepralatroshanbin}.

\paragraph{Burning number of the Euclidean lattice.}
Fix $d\in\mathbb{N}^*$.
For each $n\in\mathbb{N}^*$, we denote by $\mathbb{T}_n^d$ the discrete Euclidean torus $\left.\mathbb{Z}^d\middle/n\mathbb{Z}^d\right.$ with side length $n$, equipped with its natural graph structure.
In dimension $d=1$, the burning number of the discrete torus was first calculated by Bonato, Janssen and Roshanbin \cite[Corollary 2.10]{bonatojanssenroshanbin}, who showed that $b\left(\mathbb{T}_n^1\right)=\left\lceil\sqrt{n}\right\rceil$.
In dimension $d=2$, the burning number of the discrete torus was estimated by Mitsche, Pra\l{}at and Roshanbin
\cite[Corollary 4.1]{mitschepralatroshanbin}, who proved that $b\left(\mathbb{T}_n^2\right)\sim\left(3/2\right)^{1/3}\cdot n^{2/3}$ as $n\to\infty$.
In general dimension $d\in\mathbb{N}^*$, it is not difficult to show that as $n\to\infty$, the burning number of $\mathbb{T}_n^d$ is of order $n^{d/(d+1)}$:

\begin{prop}\label{prop:detbounds}
As $n\to\infty$, we have
\[\left(\alpha_d+o(1)\right)\cdot n^{d/(d+1)}\leq b\left(\mathbb{T}_n^d\right)\leq\left(\gamma_d+o(1)\right)\cdot n^{d/(d+1)},\]
where
\[\alpha_d=\left(\frac{(d+1)!}{2^d}\right)^{1/(d+1)}\quad\text{and}\quad\gamma_d=\left(\left(1+\frac{1}{d}\right)^d\cdot(d+1)!\right)^{1/(d+1)}.\]
\end{prop}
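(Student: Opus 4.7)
I would prove the two bounds separately: the lower one by a standard volume argument and the upper one by an explicit construction, the latter being the less routine half.

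\medskip

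\noindent\textbf{Lower bound.} The closed $\ell^1$-ball of radius $r$ in $\mathbb{Z}^d$ has cardinality $\sum_{j=0}^d 2^j\binom{d}{j}\binom{r}{j}$, which is bounded above by $(2r+1)^d/d!$ for every $r\geq 0$ (an easy induction on $d$) and is asymptotic to $2^d r^d/d!$ as $r\to\infty$. For any burning sequence $x_1,\ldots,x_k\in\mathbb{T}_n^d$, the covering condition $\mathbb{T}_n^d=\bigcup_{i=1}^{k}\overline B(x_i,k-i)$ together with summing these cardinalities yields
\[n^d\leq\sum_{i=1}^{k}|\overline B(x_i,k-i)|\leq\frac{1}{d!}\sum_{j=0}^{k-1}(2j+1)^d\sim\frac{2^d k^{d+1}}{(d+1)!}\quad\text{as $k\to\infty$,}\]
which rearranges into $k\geq(\alpha_d+o(1))n^{d/(d+1)}$.

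\medskip

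\noindent\textbf{Upper bound.} I would build an explicit burning sequence of length at most $(\gamma_d+o(1))n^{d/(d+1)}$. The plan is to fix a parameter $L$ of order $n^{d/(d+1)}$, partition $\mathbb{T}_n^d$ into roughly $(n/L)^d$ axis-aligned cubes of side $L$ (the boundary strip of width less than $L$ contributes only $o(n^d)$ leftover vertices, which can be absorbed into $o(n^{d/(d+1)})$ extra fires), and place one fire per cube at a chosen vertex whose $\ell^1$-ball at time $k$ covers the cube. The fires are ordered so that the one placed last has radius at least the required covering distance. A simple centre-of-cube placement (radius at least the Chebyshev radius $dL/2$) already gives a bound of the correct order $n^{d/(d+1)}$ upon optimising $L$. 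To reach the specific coefficient $\gamma_d$, one uses a sharper partition: for instance, splitting each cube into its $d!$ standard Kuhn simplices and assigning one fire per simplex at a well-chosen corner whose $\ell^1$-ball of the relevant radius contains that simplex. Optimising the side length $L$ in $n^{d/(d+1)}$ then produces the claimed constant.

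\medskip

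\noindent\textbf{Main obstacle.} The main technical challenge is obtaining the exact constant $\gamma_d=((1+1/d)^d(d+1)!)^{1/(d+1)}$ in the upper bound, rather than just some larger coefficient of the same order. Different natural partitions (cube centres, cube corners, Kuhn simplices, sublattices of the grid, etc.) yield slightly different constants, and matching $\gamma_d$ across all dimensions requires a construction tailored to the geometry of $\ell^1$-balls in $\mathbb{Z}^d$. The integrality of $L$ and the resulting boundary effects are routine to handle and contribute only $o(n^{d/(d+1)})$. The lower bound is otherwise essentially automatic once the cardinality of $\ell^1$-balls is pinned down.
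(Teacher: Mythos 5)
Your lower bound is correct in spirit and matches the paper's volume argument, but the stated pointwise inequality $|\overline B(0,r)|\leq(2r+1)^d/d!$ is false for $d\geq 2$: in $\mathbb{Z}^2$ the $\ell^1$-ball of radius $r$ has $2r^2+2r+1$ points, strictly exceeding $(2r+1)^2/2=2r^2+2r+\tfrac12$. A correct pointwise bound is $(2r+d)^d/d!$ (covering the lattice points by unit cubes and computing the Lebesgue volume of the enlarged $\ell^1$-ball), but since only the asymptotic $|\overline B(0,r)|\sim 2^dr^d/d!$ is needed — exactly as the paper uses it — this is a cosmetic slip, not a structural flaw.

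The upper bound is where there is a genuine gap. You correctly sense that the na\"ive cube partition does not hit $\gamma_d$, but you leave it at that, and your proposed remedy (Kuhn simplices with corner placements) is a sketch of an idea, not an argument; you would still need to compute the covering radius of each simplex in $\ell^1$ and show the optimisation lands exactly on $\gamma_d$, which is far from clear. It is also worth noting that the simple cube-centre construction you describe gives the constant $(d+1)\cdot 2^{-d/(d+1)}$ after optimising $L$, and one can check this is actually \emph{worse} than $\gamma_d$ for $d\geq 6$ (the comparison reduces to $2^d\,d!$ versus $d^d$), so the partition really does matter. The paper sidesteps all of this by working with the native geometry of the burning process: take a maximal set $A_n$ of centres in $\mathbb{T}_n^d$ pairwise more than $r:=k_n(c)$ apart, so that the radius-$r$ balls cover the torus while the radius-$\lfloor r/2\rfloor$ balls are disjoint, giving $\#A_n\leq n^d/\#\overline B(0,\lfloor r/2\rfloor)$ by packing. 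Burning the $\#A_n$ centres and then waiting $r$ more steps gives $b(\mathbb{T}_n^d)\leq \#A_n+r$, and optimising $c$ produces $\gamma_d$ directly. This $\ell^1$-ball covering, not a cube or simplicial tiling, is the missing idea; with it the upper bound is a one-paragraph computation rather than a construction tailored to each dimension.
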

The proof of this proposition, which we present in Section \ref{sec:detbounds}, is based on volume and covering arguments.
We leave open the more difficult question of obtaining a precise asymptotic.
Note that in dimensions $d=1,2$, the lower bound in Proposition \ref{prop:detbounds} is asymptotically sharp, i.e, we have $b\left(\mathbb{T}_n^d\right)\sim\alpha_d\cdot n^{d/(d+1)}$ as $n\to\infty$.

\paragraph{Random burning of the Euclidean lattice.}
Given a sequence $(x_1,x_2,\ldots)$ in $\mathbb{T}_n^d$, we define the burning process associated with $(x_1,x_2,\ldots)$ as the sequence of subsets of $\mathbb{T}_n^d$ that we denote by $(\mathsf{Burned}_k(x_1,\ldots,x_k)\,;\,k\in\mathbb{N})$, given by
\[\mathsf{Burned}_k(x_1,\ldots,x_k):=\bigcup_{i=1}^k\overline{B}(x_i,k-i)\quad\text{for all $k\in\mathbb{N}$.}\]
In this paper, following Mitsche, Pra\l{}at and Roshanbin \cite{mitschepralatroshanbin}, we consider the burning process associated with some sequence $(X_1,X_2,\ldots)$ of random variables with values in $\mathbb{T}_n^d$, for two natural choices of distribution for that sequence.
In each case, we are interested in the asymptotic behaviour (as $n\to\infty$) of the corresponding random burning number:
\[\tau_n=\inf\left\{k\in\mathbb{N}:\mathsf{Burned}_k(X_1,\ldots,X_k)=\mathbb{T}_n^d\right\}.\]
Our results are stated in Theorem \ref{thm:couponcollector} and Theorem \ref{thm:rejectionsampling} below.

\subparagraph{Coupon collector.}
Perhaps the most natural choice of distribution for the sequence of random variables that we set on fire is the following one:
Let $(X_1,X_2,\ldots)$ be independent and have uniform distribution on $\mathbb{T}_n^d$.
We denote by $\mathcal{B}_n^\mathsf{cc}(\cdot)$ the burning process associated with $(X_1,X_2,\ldots)$, and by $\tau_n^\mathsf{cc}$ the corresponding random burning number: we have
\[\mathcal{B}_n^\mathsf{cc}(k)=\mathsf{Burned}_k(X_1,\ldots,X_k)=\bigcup_{i=1}^k\overline{B}(X_i,k-i)\quad\text{for all $k\in\mathbb{N}$,}\]
and
\[\tau_n^\mathsf{cc}=\inf\left\{k\in\mathbb{N}:\mathcal{B}_n^\mathsf{cc}(k)=\mathbb{T}_n^d\right\}.\]
We will refer to $\mathcal{B}_n^\mathsf{cc}(\cdot)$ as the ``coupon collector'' burning process, which is what the supercript $\mathsf{cc}$ stands for.
Indeed, determining the asymptotic behaviour of $\tau_n^\mathsf{cc}$ as $n\to\infty$ has the flavour of a geometric coupon collector problem.
See \cite{aldous,camposteixeira} for related literature.
In dimension $d=1$, it was shown by Mitsche, Pra\l{}at and Roshanbin \cite[Theorem 1.4.(i)]{mitschepralatroshanbin} that as $n\to\infty$, we have
\[\left(\frac{n\ln n}{2}\right)^{-1/2}\cdot\tau_n^\mathsf{cc}\longrightarrow1\quad\text{in probability.}\]
We obtain the analogous result in general dimension $d\in\mathbb{N}^*$.

\begin{thm}\label{thm:couponcollector}
As $n\to\infty$, we have
\[\left(\frac{d!}{2^d}\cdot n^d\cdot\ln\left(n^d\right)\right)^{-1/(d+1)}\cdot\tau_n^\mathsf{cc}\longrightarrow1\quad\text{in probability.}\]
\end{thm}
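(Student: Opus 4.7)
The plan is to analyze $U_k$, the number of vertices of $\mathbb{T}_n^d$ unburned at step $k$, through first- and second-moment methods, carefully accounting for the spatial clustering of unburned vertices. By translation invariance of the torus and independence of the $X_i$'s,
\[E[U_k] = n^d p_k,\qquad p_k := \prod_{j=0}^{k-1}\!\left(1 - \frac{V_j}{n^d}\right),\qquad V_j := \bigl|\overline{B}(0,j)\bigr|.\]
Using the classical expansion $V_j = (2^d/d!)\, j^d + O(j^{d-1})$ and a Taylor expansion of $\ln(1-x)$, one obtains $\ln E[U_k] = d \ln n - (1+o(1)) \cdot \frac{2^d}{(d+1)!} \cdot \frac{k^{d+1}}{n^d}$ in the regime $k = \Theta(n^{d/(d+1)}(\ln n)^{1/(d+1)})$.

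A striking feature of the problem is that the naive Markov threshold $E[U_k] = 1$ corresponds to $k^{d+1} = \frac{(d+1)!}{2^d} n^d \ln(n^d)$, which exceeds the theorem's $k_n^\star := \bigl(\frac{d!}{2^d} n^d \ln(n^d)\bigr)^{1/(d+1)}$ by a factor of $(d+1)^{1/(d+1)}$. This discrepancy reflects the spatial clustering of unburned vertices: writing $q_k(z) := P(0\text{ and }z\text{ both unburned at step }k)$, a direct estimate gives $q_k(z) \approx p_k \exp(-C|z|_1 k^d/n^d)$ for $|z|_1 \ll k$, which identifies a correlation length $r_0 \asymp n^d/k^d$ for the unburned events. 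Whenever the unburned region is non-empty, it is expected to contain a whole cluster of typical volume $\sim r_0^d$, and the effective Poisson parameter controlling $P(U_k \geq 1)$ is $\lambda_k \sim E[U_k]/r_0^d$; one checks that the transition window for $\lambda_k$ is concentrated at $k = (1+o(1)) k_n^\star$, recovering the sharp threshold.

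For the lower bound $\tau_n^{\mathsf{cc}} > (1-\epsilon) k_n^\star$ whp, the second-moment method applied to $U_{k_-}$ with $k_- := (1-\epsilon) k_n^\star$ suffices: $E[U_{k_-}] \gg r_0^d \to \infty$, and Chebyshev yields the result provided $\mathrm{Var}(U_{k_-}) = o(E[U_{k_-}]^2)$. Writing $E[U_k^2] = n^d \sum_z q_k(z)$, I would split the sum over $z$ into (i) "far" $z$ with $|z|_\infty > 2(k-1)$, where the balls $\overline{B}(0,j)$ and $\overline{B}(z,j)$ are disjoint for every $j < k$ and $q_k(z) \leq p_k^2$; (ii) "close" $z$, where the key input is the sharp geometric estimate $V_j - |\overline{B}(0,j) \cap \overline{B}(z,j)| \lesssim |z|_1 \cdot j^{d-1}$ for $|z|_1 \ll j$, yielding $q_k(z) \lesssim p_k^2 \exp(C|z|_1 k^d/n^d)$. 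Summing over shells $\{|z|_1 = r\}$ gives a variance bound of the required form.

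The upper bound $\tau_n^{\mathsf{cc}} \leq (1+\epsilon) k_n^\star$ whp is the main obstacle: naive Markov on $U_{k_+}$ is insufficient because $E[U_{k_+}]$ remains polynomial in $n$. The plan is to apply Markov to a refined count capturing the clustering --- for instance, the number of unburned points on a sub-lattice of spacing $L \sim r_0$, whose expectation is $(n/L)^d p_{k_+} \sim \lambda_{k_+} \to 0$ --- and then argue geometrically that "no unburned sub-lattice point" forces "no unburned point at all" with high probability, using that unburned components of the process are constrained to have linear diameter $\gtrsim L$. Making this geometric step rigorous is the technical crux and requires a sharp control of the random ball geometry; an alternative Chen--Stein Poisson approximation at the cluster level would bypass the geometry at the cost of equally sharp correlation estimates at all scales.
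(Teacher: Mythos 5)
Your lower bound is essentially the paper's argument: a second--moment method on $U_k$ with the sum over $z$ split into far ($|z|$ large, balls disjoint) and close cases. The paper is actually a bit cruder than you — it simply bounds the intersection by $\#\overline{B}(0,j)$ uniformly and beats the resulting $\exp\bigl(\sum_j \#\overline{B}(0,j)/n^d\bigr) = n^{(2^d c/d!)\cdot d/(d+1)}$ by the prefactor $\#\overline{B}(0,2k_n(c))/n^d \asymp n^{-d/(d+1)}$, which suffices precisely when $2^d c/d! < 1$. Your shell-by-shell intersection estimate is sharper than needed, but would work.

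The genuine gap is in the upper bound, and it is exactly the step you flag as ``the technical crux.'' You propose a Markov bound on unburned sub-lattice points (correct idea, and the expectation calculation is the right one), and then to argue ``geometrically that `no unburned sub-lattice point' forces `no unburned point at all' with high probability, using that unburned components of the process are constrained to have linear diameter $\gtrsim L$.'' That last implication is false as stated: the unburned set at a fixed time $k$ is the complement of a union of $\ell^1$-balls, and nothing prevents it from containing an isolated unburned vertex wedged between two balls; there is no reason its connected components should have diameter $\gtrsim L$. Proving a ``with high probability'' version of this would require precisely the sharp geometric control you worry about, and the Chen--Stein alternative is heavy machinery for the same reason.

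The idea that makes the paper's proof clean — and that your proposal misses — is that you should not try to infer anything at time $k$ from the sub-lattice at time $k$; instead use deterministic fire propagation. Let $A_n$ be the centres of a covering of $\mathbb{T}_n^d$ by balls of radius $r_n := \lfloor n^{d/(d+1)}\rfloor$ (so $\#A_n \lesssim n^{d/(d+1)}$), and observe the deterministic inclusion
\[
\bigl(\tau_n^{\mathsf{cc}} > k_n(c) + r_n\bigr) \subset \bigcup_{x\in A_n}\bigl(x\notin\mathcal{B}_n^{\mathsf{cc}}(k_n(c))\bigr):
\]
if every $x\in A_n$ is burned by time $k_n(c)$, then after $r_n$ further steps the fire from $x$ covers the entire ball $\overline{B}(x,r_n)$, and these balls cover $\mathbb{T}_n^d$. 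A union bound then gives $\mathbb{P}(\tau_n^{\mathsf{cc}} > k_n(c)+r_n) \leq C n^{d/(d+1)} p_{k_n(c)}$, which vanishes as soon as $c > d!/2^d$, and $r_n = o(k_n(c))$ so the shift is harmless. No probabilistic geometry of unburned components is needed. Without this propagation trick, the upper bound as you describe it is not a proof; this is the missing idea.
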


The proof, which we present in Section \ref{sec:couponcollector}, is based on a first and second moment argument.

\subparagraph{Rejection sampling.}
Another natural choice of distribution for the sequence of random variables that we set on fire, say $(Y_1,Y_2,\ldots)$ in this case, is one such that the following condition is satisfied: for each ${k\in\mathbb{N}}$, conditionally on $Y_1,\ldots,Y_k$, the random variable $Y_{k+1}$ has uniform distribution on the complement of $\mathsf{Burned}_k(Y_1,\ldots,Y_k)$, provided that $\mathsf{Burned}_k(Y_1,\ldots,Y_k)\varsubsetneq\mathbb{T}_n^d$.
We denote by $\mathcal{B}_n^\mathsf{rs}(\cdot)$ the burning process associated with $(Y_1,Y_2,\ldots)$, and by $\tau_n^\mathsf{rs}$ the corresponding random burning number: we have
\[\mathcal{B}_n^\mathsf{rs}(k)=\mathsf{Burned}_k(Y_1,\ldots,Y_k)=\bigcup_{i=1}^k\overline{B}(Y_i,k-i)\quad\text{for all $k\in\mathbb{N}$,}\]
and
\[\tau_n^\mathsf{rs}=\inf\left\{k\in\mathbb{N}:\mathcal{B}_n^\mathsf{rs}(k)=\mathbb{T}_n^d\right\}.\]
See Figure \ref{fig:burn} for an illustration.
\begin{figure}[!h]
\begin{center}
\vspace{-2cm}
\includegraphics[width=5.3cm,angle=45]{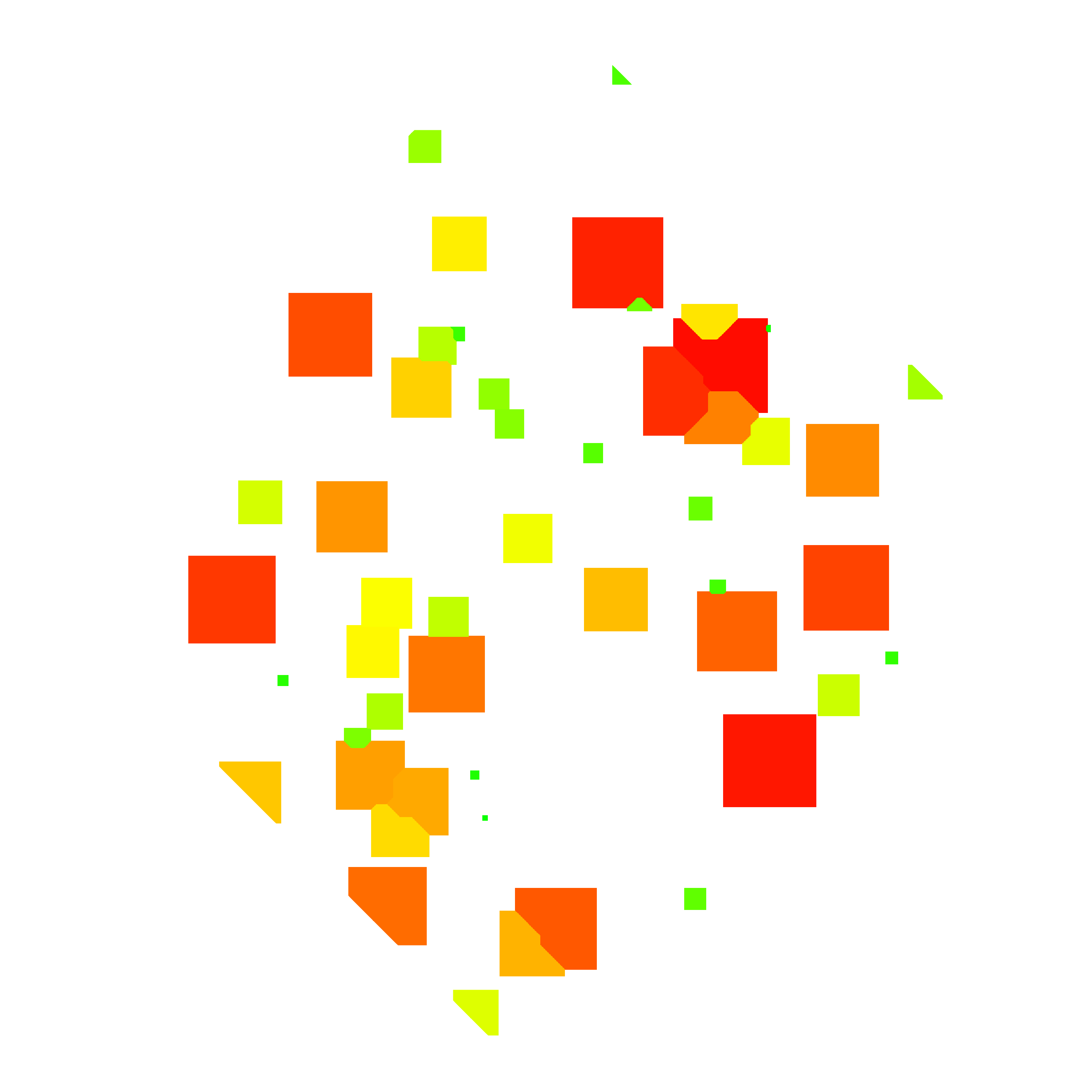} \hspace{-3.5cm}\includegraphics[width=5.3cm,angle=45]{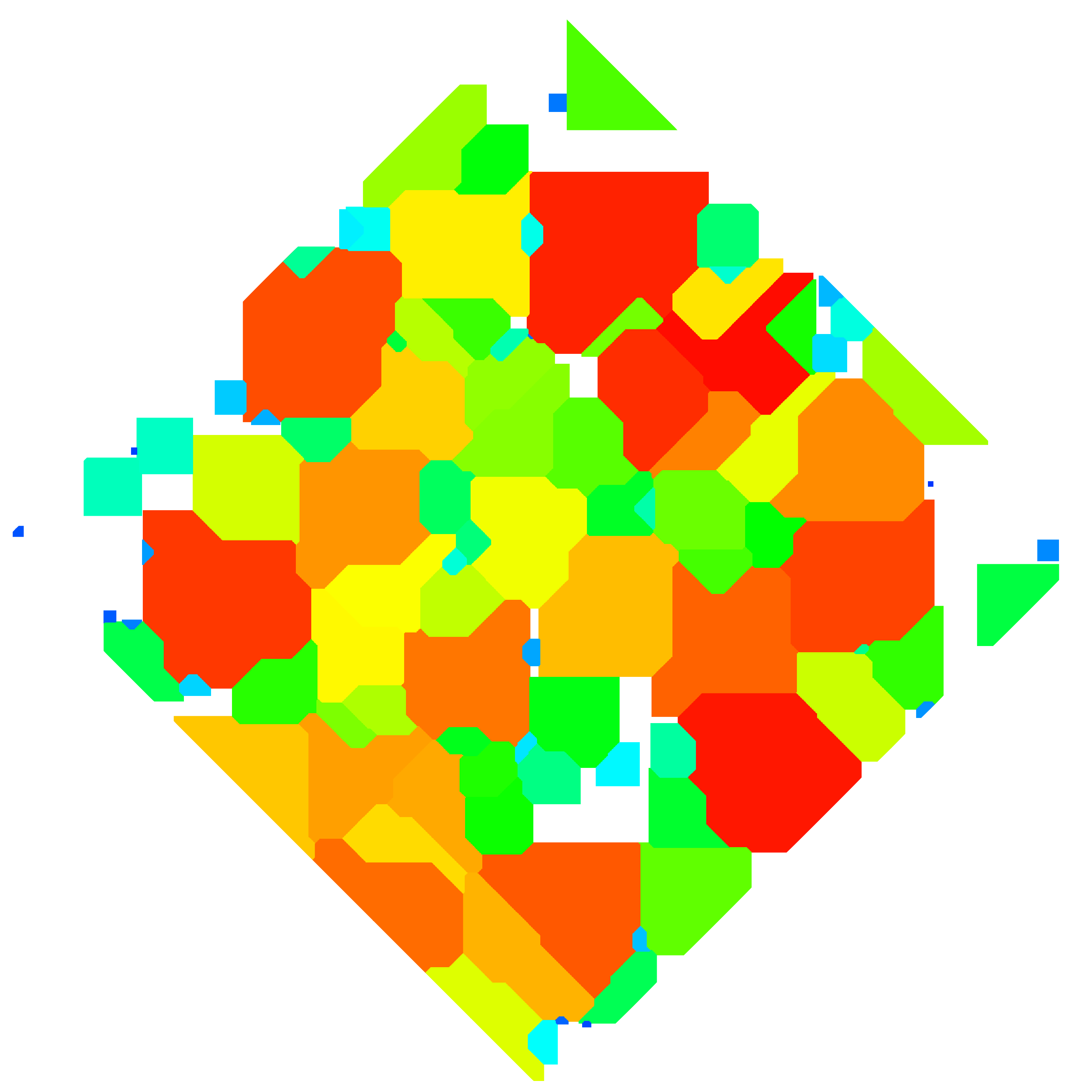} \hspace{-3.5cm}\includegraphics[width=5.3cm,angle=45]{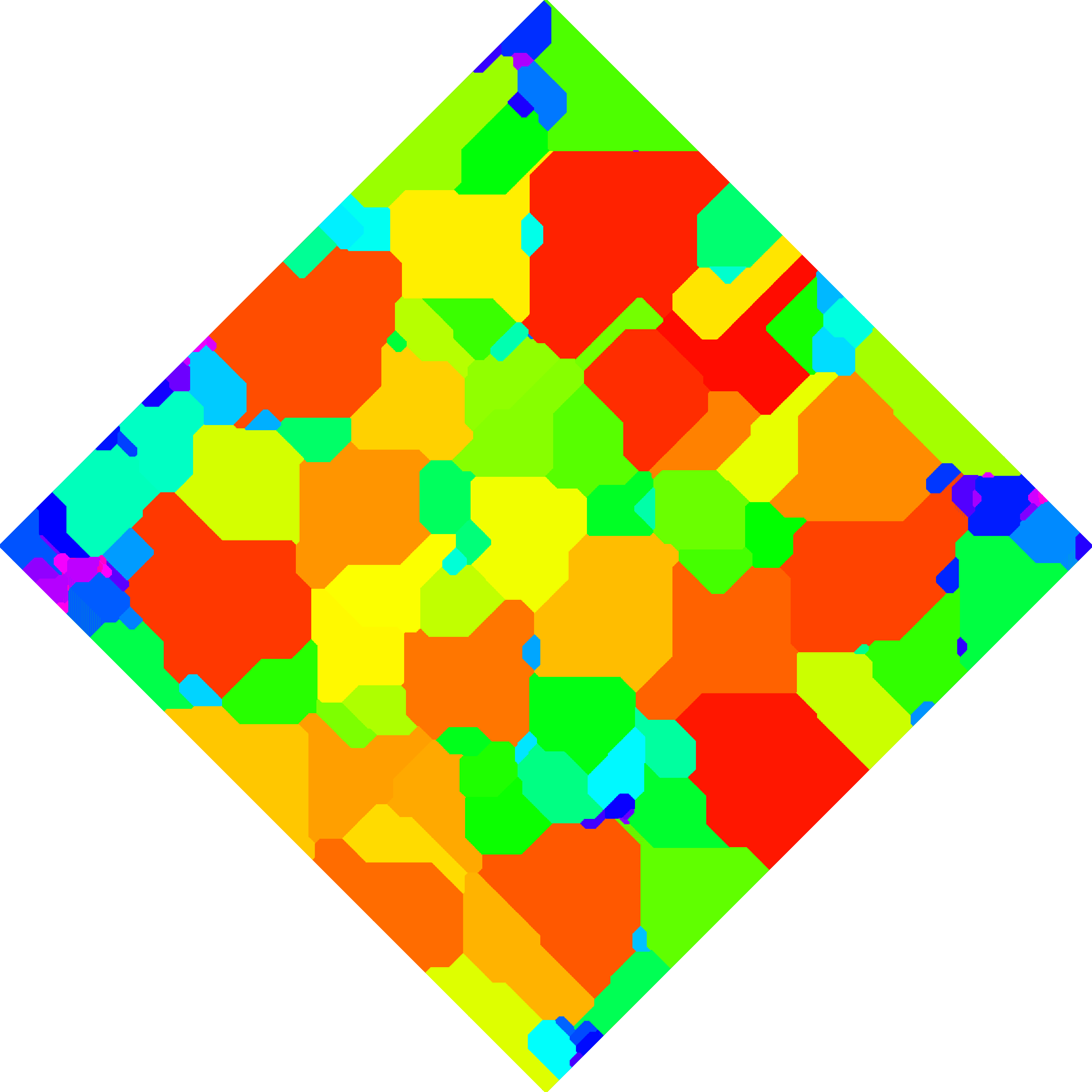}\vspace{-2cm}
\caption{\label{fig:burn}Simulation of the rejection sampling burning process on the $600\times600$ grid, at steps $50$ (left), $100$ (middle) and $147$ (right); where in the last case, the whole grid is burned. The colors of the vertices represent the arrival time of ``the'' point that burned them, from red to purple.}
\end{center}
\end{figure}
Note that the distribution of the random burning process $\mathcal{B}_n^\mathsf{rs}(\cdot)$ and that of the random burning number $\tau_n^\mathsf{rs}$ are not affected by the degrees of freedom that the previous condition leaves on the distribution of $(Y_1,Y_2,\ldots)$, for as soon as ${\mathcal{B}_n^\mathsf{rs}(k)=\mathbb{T}_n^d}$, we have ${\mathcal{B}_n^\mathsf{rs}(k+1)=\mathbb{T}_n^d}$.
Moreover, a natural way to construct such a sequence $(Y_1,Y_2,\ldots)$ is to define it out of a collection of independent random variables with uniform distribution on $\mathbb{T}_n^d$, by using rejection sampling (see Subsection \ref{subsec:coupling}).
For this reason, we refer to $\mathcal{B}_n^\mathsf{rs}(\cdot)$ as the ``rejection sampling'' burning process, which is what the supercript $\mathsf{rs}$ stands for.
In dimension $d=1$, it was shown by Mitsche, Pra\l{}at and Roshanbin \cite{mitschepralatroshanbin} that there exists a constant $\gamma\in(1,\infty)$ such that\footnote{To be more precise, the lower bound in \eqref{eq:costdrunk} comes from the deterministic bound $b\left(\mathbb{T}_n^1\right)\geq\sqrt{n}$, and the upper bound follows from \cite[Theorem 1.4.(ii)]{mitschepralatroshanbin} by a straightforward coupling argument.}
\begin{equation}\label{eq:costdrunk}
\mathbb{P}\left(\sqrt{n}\leq\tau_n^\mathsf{rs}\leq\gamma\cdot\sqrt{n}\right)\longrightarrow1\quad\text{as $n\to\infty$.}
\end{equation}

Our main result identifies the precise asymptotic of the rejection sampling random burning number in general dimension $d\in\mathbb{N}^*$, together with the scaling limit of the process $\#\mathcal{B}_n^\mathsf{rs}(\cdot)$ (which turns out to be more convenient to state for the complement).
It involves the maximal solution $y:{[0,T[}\rightarrow\mathbb{R}$ of the Cauchy problem
\begin{equation}\label{eq:blasius}
\begin{cases}
y^{(d+1)}(t)=2^d\cdot y(t)\cdot y^{(d)}(t),\quad t\in\mathbb{R}_+,\\
\text{$y(0),\ldots,y^{(d-1)}(0)=0$ and $y^{(d)}(0)=1$.}
\end{cases}
\end{equation}
The above equation is closely related to the so-called generalised Blasius equation studied in \cite{blasius}, where the multiplicative constant $2^d$ is not present.
More precisely, let $x:{[0,S[}\rightarrow\mathbb{R}$ be the maximal solution of the Cauchy problem
\[\begin{cases}
x^{(d+1)}(t)=x(t)\cdot x^{(d)}(t),\quad t\in\mathbb{R}_+,\\
\text{$x(0),\ldots,x^{(d-1)}(0)=0$ and $x^{(d)}(0)=1$.}
\end{cases}\]
We have $T=\beta\cdot S$, and $y(t)=\beta^d\cdot x\left(\beta^{-1}\cdot t\right)$ for all $t\in[0,T[$, where $\beta=2^{-d/(d+1)}$.
In particular, it follows from \cite[Theorem 1]{blasius} that the maximal solution $y:{[0,T[}\rightarrow\mathbb{R}$ of \eqref{eq:blasius} blows up in finite time, i.e, we have $T\in(0,\infty)$.

\begin{thm}\label{thm:rejectionsampling}
As $n\to\infty$, we have
\begin{equation}\label{eq:asymptoticrandomburning}
\frac{\tau_n^\mathsf{rs}}{n^{d/(d+1)}}\longrightarrow T\quad\text{in probability,}
\end{equation}
where $T=T(d)\in(0,\infty)$ is the explosion time of the maximal solution $y:{[0,T[}\rightarrow\mathbb{R}$ to the Cauchy problem \eqref{eq:blasius}.
Moreover, we have
\begin{equation}\label{eq:functionalconvergence}
\sup_{t\in[0,T[}\left|\frac{\#\left(\mathbb{T}_n^d\middle\backslash\mathcal{B}_n^\mathsf{rs}\left(\left\lfloor t\cdot n^{d/(d+1)}\right\rfloor\right)\right)}{n^d}-\frac{1}{y^{(d)}(t)}\right|\overset{\mathbb{P}}{\longrightarrow}0\quad\text{as $n\to\infty$.}
\end{equation}
\end{thm}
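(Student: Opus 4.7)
The plan is to establish the functional convergence~\eqref{eq:functionalconvergence} and deduce~\eqref{eq:asymptoticrandomburning} from it. Writing $u_n(t) := \#(\mathbb{T}_n^d \setminus \mathcal{B}_n^{\mathsf{rs}}(\lfloor t n^{d/(d+1)}\rfloor))/n^d$, the candidate limit $u(t) = 1/y^{(d)}(t)$ satisfies, via $\phi := -\ln u$ (which reduces~\eqref{eq:blasius} to $\phi^{(d+1)} = 2^d e^{\phi}$ with $\phi(0)=\cdots=\phi^{(d)}(0)=0$), the integral equation
\[
u(t) = \exp\!\left(-\int_0^t \frac{2^d\,(t-s)^d}{d!\,u(s)}\, ds\right),
\]
equivalently the ODE $u'(t) = -\frac{2^d u(t)}{(d-1)!} \int_0^t (t-s)^{d-1}/u(s)\,ds$. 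This is the ``fixed point'' predicted by a mean-field Poisson heuristic: rescaling space by $n^{d/(d+1)}$ (so the rescaled torus tends to $\mathbb{R}^d$ and an $\ell^1$-ball of rescaled radius $r$ has rescaled volume $2^d r^d/d!$), the centres $(Y_i,i)$ viewed on rescaled spacetime form a point process of intensity $\mathbb{1}_{U(s)}(y)/u(s)$ (one uniform centre per original time step placed in an unburned region of rescaled volume $u(s)\,n^{d/(d+1)}$). The point~$0$ is unburned at rescaled time~$t$ iff no centre lies in its backward $\ell^1$-cone; under this conditioning $\mathbb{1}_{U(s)}(y) = 1$ throughout the cone (any ball reaching $(s,y)$ in the cone would itself have been centred at a point in the cone, contradicting the absence of centres there), so the conditional intensity within the cone is exactly $1/u(s)$, giving the displayed probability.

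The main step is to convert this heuristic into convergence of the first moment $\mathbb{E}[u_n(t)] \to u(t)$. By translation invariance, $\mathbb{E}[u_n(t)] = \rho_n(k) := \mathbb{P}(0 \notin \mathcal{B}_n^{\mathsf{rs}}(k))$ with $k = \lfloor t n^{d/(d+1)} \rfloor$. A direct computation using that $Y_{k+1}$ is uniform on the unburned region at step~$k$ yields
\[
\rho_n(k+1) - \rho_n(k) = -\sigma_n(k) + O(1/n^d),\qquad \sigma_n(k) := \mathbb{P}\!\left(0 \notin \mathcal{B}_n^{\mathsf{rs}}(k),\; 0 \in {\textstyle\bigcup_{i=1}^k} \overline{B}(Y_i,k-i+1)\right),
\]
so the task reduces to showing that $n^{d/(d+1)}\sigma_n(k) \to (2^d u(t)/(d-1)!) \int_0^t (t-s)^{d-1}/u(s)\,ds$. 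I would establish this using the rejection-sampling coupling of Subsection~\ref{subsec:coupling} (expressing the $Y_i$ via an i.i.d.~uniform sequence) and carefully controlling the dependence between past centres and the event ``$0$ lies on the outer vertex boundary''. Uniqueness of the limit $u$ on $[0,T)$ then follows from local Lipschitz continuity of the ODE's right-hand side on $\{u > 0\}$.

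For concentration I would use a second-moment bound: writing $\mathrm{Var}(u_n(t)) = n^{-2d} \sum_{x,x'} \mathrm{Cov}(\mathbb{1}\{x\notin \mathcal{B}_n^{\mathsf{rs}}(k)\}, \mathbb{1}\{x' \notin \mathcal{B}_n^{\mathsf{rs}}(k)\})$, the covariance is non-negligible only when the backward cones of $x$ and $x'$ intersect, i.e.\ when $\|x-x'\|_1 \lesssim 2k \sim n^{d/(d+1)}$; only $O(n^d k^d) = O(n^{d+d^2/(d+1)}) = o(n^{2d})$ pairs contribute, hence $\mathrm{Var}(u_n(t)) = o(1)$ at each fixed~$t$. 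Combined with the first-moment convergence and the monotonicity of $t \mapsto u_n(t)$, this yields the uniform convergence~\eqref{eq:functionalconvergence} on each $[0,T-\varepsilon]$; extension to $[0,T)$ uses $u(T^-)=0$. The conclusion~\eqref{eq:asymptoticrandomburning} then follows: the lower bound is immediate from $u(T-\varepsilon) > 0$, while for the upper bound a short geometric argument burns the $\leq \delta n^d$ remaining vertices in $o(n^{d/(d+1)})$ further steps once $u_n(t_0) < \delta$ at some $t_0 < T$. The hardest step is the first-moment identification: the rs process is strongly non-Markovian and the self-consistent Poisson intensity $1/u(s)$ depends on the limit $u$ itself, so matching the discrete recursion for $\sigma_n$ to the ODE prediction requires delicate geometric estimates on overlapping $\ell^1$-balls together with the rejection-sampling coupling.
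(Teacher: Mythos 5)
Your mean-field Poisson heuristic is exactly the right intuition, and your high-level decomposition — functional LLN first, then a separate geometric upper-bound argument — matches the paper's. But the proposal has two genuine gaps that are precisely the places where the paper's actual work is concentrated.

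First and foremost, you explicitly flag the first-moment identification as ``the hardest step'' but do not resolve it, and your proposed route does not obviously close. Your recursion $\rho_n(k+1)-\rho_n(k) = -\sigma_n(k) + O(1/n^d)$ is only the beginning: to evaluate $\sigma_n(k)$ you need the conditional law of the past centres $(Y_i)_{i\le k}$ inside the backward cone of $0$, given $0$ is still unburned, and that conditional intensity is $1/u(s)$ — i.e.\ it depends on the very limit you are trying to identify. The paper breaks this self-reference by introducing the auxiliary processes $\mathcal{B}_n^p(\cdot)$ (Subsection \ref{subsec:coupling}): at level $p+1$ the rejection is performed against $\mathcal{B}_n^p$, which is a previously constructed process, so the rejection set is no longer self-referential. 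One then proves a ``propagation'' lemma (Lemma \ref{lem:key}) that lifts a functional LLN for the rejection set $\mathcal{A}_n$ to the process built from it, and applies it inductively, yielding $\#(\mathbb{T}_n^d\setminus\mathcal{B}_n^p)/n^d \to 1/f_p$ with $f_p$ a Picard iterate that converges to $y^{(d)}$ (Claim \ref{claim:picard}). Controlling $\mathbb{E}[\#(\mathcal{B}_n^*\setminus\mathcal{B}_n^p)]/n^d$ via a Gr\"onwall argument then closes the bootstrap. Nothing of this kind appears in your proposal; without it, the self-consistency problem you correctly identify remains open. Relatedly, your variance estimate treats $\mathrm{Cov}(\mathbf{1}\{x\notin\mathcal{B}_n^{\mathsf{rs}}(k)\},\mathbf{1}\{x'\notin\mathcal{B}_n^{\mathsf{rs}}(k)\})$ as negligible for $\|x-x'\|_1\gg k$, but for the genuine $\mathsf{rs}$ process the rejection step introduces global dependence (the law of $Y_{i+1}$ depends on the entire burned set), so locality of correlations is not automatic; the paper again sidesteps this by doing the second-moment argument for the auxiliary $\mathcal{B}_n$ of Lemma \ref{lem:key}, where the rejection set is an (inductively concentrated) $\mathcal{A}_n$.

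Second, the deduction of \eqref{eq:asymptoticrandomburning} from \eqref{eq:functionalconvergence} is much harder than your ``short geometric argument'' suggests, and your claim of $o(n^{d/(d+1)})$ extra steps is quantitatively wrong for fixed $\delta$: burning a set of density $\delta$ costs at least order $\delta^{1/(d+1)}\, n^{d/(d+1)}$ steps, so the extra cost is $\Theta_\delta(n^{d/(d+1)})$ and only vanishes as $\delta\to 0$. The real difficulty is that the remaining unburned vertices could be fragmented across the torus, and the rejection-sampling centres land in the unburned region but need to cover it. The paper's Subsection \ref{subsec:asymptoticrandomburning} handles this with a multi-scale construction: a nested partition of $\mathbb{T}_n^d$ into boxes of geometrically decreasing side $r_k=\varepsilon n^{d/(d+1)} 2^{-k}$, Poisson-distributed auxiliary ``stopping times'' $\theta_{k+1}$, and a Chernoff-bound induction showing the number of unhit boxes at each scale stays below $\beta r_k$. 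This is a substantial argument in its own right, not a corollary of \eqref{eq:functionalconvergence}.
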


This is the core of the paper; we present it in Section \ref{sec:rejectionsampling}.
To conclude this introduction, let us note that in dimension $d=1$, the Cauchy problem \eqref{eq:blasius} can be solved explicitly: we have $T=\pi/2$, and $y(t)=\tan(t)$ for all $t\in{[0,\pi/2[}$.
In particular, we get that
\[\frac{\tau_n^\mathsf{rs}}{\sqrt{n}}\underset{n\to\infty}{\overset{\mathbb{P}}{\longrightarrow}}\frac{\pi}{2}\quad\text{and}\quad\sup_{t\in[0,\pi/2[}\left|\frac{\#\left(\mathbb{T}_n^1\middle\backslash\mathcal{B}_n^\mathsf{rs}\left(t\cdot\sqrt{n}\right)\right)}{n}-\cos(t)^2\right|\underset{n\to\infty}{\overset{\mathbb{P}}{\longrightarrow}}0.\]

\paragraph{Acknowledgements.}
We warmly thank Nicolas Curien for encouraging this collaboration and for stimulating discussions at the early stages of this project.
We acknowledge support from ``SuPerGRandMa'', the ERC Consolidator grant no.~101087572, and benefitted from a PEPS JCJC grant.
G.B.~is grateful to the members of LAGA at Université Sorbonne Paris Nord for their hospitality, and acknowledges support from SNSF Eccellenza grant 194648 and NCCR SwissMAP.

\section{Deterministic estimates}\label{sec:detbounds}

As a warmup, we start with the proof of Proposition \ref{prop:detbounds}.
Throughout the paper, we will use implicitly the following facts:
\begin{itemize}
\item For all $r\in\mathbb{N}$ such that $r<n/2$, we have
\[\#\overline{B}(0,r)=\#\left\{x\in\mathbb{Z}^d:\|x\|_1\leq r\right\},\]
where $\overline{B}(x,r)$ denotes the closed ball with radius $r$ around $x$ in the graph $\mathbb{T}_n^d$.
Note that with a slight abuse of notation, we omit the dependence in $n$: this will not cause any problem, since typically we will look at balls in $\mathbb{T}_n^d$ with radius $r<n/2$, so that there are no boundary effects and one can think about the same ball in $\mathbb{Z}^d$.

\item As $r\to\infty$, we have
\[\#\left\{x\in\mathbb{Z}^d:\|x\|_1\leq r\right\}\sim\lambda\left(\left\{x\in\mathbb{R}^d:\|x\|_1\leq1\right\}\right)\cdot r^d=\frac{2^d}{d!}\cdot r^d.\]
\end{itemize}

\begin{proof}[Proof of Proposition \ref{prop:detbounds}]
\emph{Lower bound.}
Set $k=b\left(\mathbb{T}_n^d\right)$ for short.
By definition, there exists $x_1,\ldots,x_k\in\mathbb{T}_n^d$ such that $\bigcup_{i=1}^k\overline{B}(x_i,k-i)=\mathbb{T}_n^d$.
We deduce that
\[n^d=\#\mathbb{T}_n^d\leq\sum_{i=1}^k\#\overline{B}(x_i,k-i)=\sum_{j=0}^{k-1}\#\overline{B}(0,j).\]
In particular, we get that
\[b\left(\mathbb{T}_n^d\right)\geq\inf\left\{k\in\mathbb{N}^*:\sum_{j=0}^{k-1}\#\overline{B}(0,j)\geq n^d\right\}=:\kappa_n.\]
Now, fix a parameter $c\in(0,\infty)$, and set
\begin{equation}\label{eq:knc}
k_n(c)=\left\lfloor\left(c\cdot n^d\right)^{1/(d+1)}\right\rfloor.
\end{equation}
Since $k_n(c)\asymp n^{d/(d+1)}\ll n/2$ as $n\to\infty$, for all sufficiently large $n\in\mathbb{N}^*$, we have
\[\#\overline{B}(0,j)=\#\left\{x\in\mathbb{Z}^d:\|x\|_1\leq j\right\}\quad\text{for all $j\in\llbracket0,k_n(c)-1\rrbracket$,}\]
hence
\[\sum_{j=0}^{k_n(c)-1}\#\overline{B}(0,j)\sim\sum_{j=0}^{k_n(c)-1}\frac{2^d}{d!}\cdot j^d\sim\frac{2^d}{d!}\cdot\frac{k_n(c)^{d+1}}{d+1}=\frac{2^d}{(d+1)!}\cdot c\cdot n^d\quad\text{as $n\to\infty$.}\]
We deduce that for every $c_1<(d+1)!\cdot2^{-d}<c_2$, we have $k_n(c_1)\leq\kappa_n\leq k_n(c_2)$ for all sufficiently large $n\in\mathbb{N}^*$.
Since $k_n(c)\sim c^{1/(d+1)}\cdot n^{d/(d+1)}$ as $n\to\infty$, this entails that
\[\kappa_n\sim\left(\frac{(d+1)!}{2^d}\right)^{1/(d+1)}\cdot n^{1/(d+1)}=\alpha_d\cdot n^{d/(d+1)}\quad\text{as $n\to\infty$,}\]
which concludes the proof of the lower bound.

\medskip

\emph{Upper bound.}
Fix a parameter $c\in(0,\infty)$ to be adjusted later, and recall the definition of $k_n(c)$ from \eqref{eq:knc}.
Then, let $\left(\overline{B}(x,k_n(c))\,;\,x\in A_n\right)$ be a covering of $\mathbb{T}_n^d$ by balls of radius $k_n(c)$, with centres $x\in\mathbb{T}_n^d$ more than $k_n(c)$ apart from each other.
Since the balls $\left(\overline{B}(x,\lfloor k_n(c)/2\rfloor)\,;\,x\in A_n\right)$ are disjoint, we have $\#A_n\leq n^d\cdot\left(\#\overline{B}(0,\lfloor k_n(c)/2\rfloor\right)^{-1}$.
Now, fix an arbitrary enumeration $x_1,\ldots,x_{\#A_n}$ of $A_n$, and fix arbitrary points ${x_{\#A_n+1},\ldots,x_{\#A_n+k_n(c)}\in\mathbb{T}_n^d}$.
We have
\[\bigcup_{i=1}^{\#A_n+k_n(c)}\overline{B}(x_i,\#A_n+k_n(c)-i)\supset\bigcup_{i=1}^{\#A_n}\overline{B}(x_i,k_n(c))=\mathbb{T}_n^d,\]
hence
\[b\left(\mathbb{T}_n^d\right)\leq\#A_n+k_n(c)\leq\frac{n^d}{\#\overline{B}(0,\lfloor k_n(c)/2\rfloor)}+k_n(c).\]
Since $k_n(c)\sim c^{1/(d+1)}\cdot n^{d/(d+1)}\ll n/2$ as $n\to\infty$, we have
\[\frac{n^d}{\#\overline{B}(0,\lfloor k_n(c)/2\rfloor)}=\frac{n^d}{\#\left\{x\in\mathbb{Z}^d:\|x\|_1\leq\lfloor k_n(c)/2\rfloor\right\}}\sim\frac{n^d}{\left.2^d\middle/d!\right.\cdot\lfloor k_n(c)/2\rfloor^d}\sim\frac{d!}{c^{d/(d+1)}}\cdot n^{d/(d+1)}.\]
Optimising $c$ in order to minimise the constant $d!\cdot c^{-d/(d+1)}+c^{1/(d+1)}$, we find that for $c=d!\cdot d$, we have
\[\frac{n^d}{\#\overline{B}(0,\lfloor k_n(c)/2\rfloor)}+k_n(c)\underset{n\to\infty}{\sim}\left(\frac{d!}{(d!\cdot d)^{d/(d+1)}}+(d!\cdot d)^{1/(d+1)}\right)\cdot n^{d/(d+1)}=\gamma_d\cdot n^{d/(d+1)}.\]
This concludes the proof of the upper bound, and of the proposition.
\end{proof}

\section{Coupon collector}\label{sec:couponcollector}

In this section, we consider the coupon collector burning process $\mathcal{B}_n^\mathsf{cc}(\cdot)$, and prove Theorem \ref{thm:couponcollector}.
Recall from the introduction that for each $n\in\mathbb{N}^*$, we let $(X_1,X_2,\ldots)$ be independent random variables with uniform distribution on $\mathbb{T}_n^d$.
Then, we set $\mathcal{B}_n^\mathsf{cc}(k)=\mathsf{Burned}_k(X_1,\ldots,X_k)$ for all $k \in \mathbb{N}$, and we let ${\tau_n^\mathsf{cc}=\inf\left\{k\in\mathbb{N}:\mathcal{B}_n^\mathsf{cc}(k)=\mathbb{T}_n^d\right\}}$.
Without further ado, let us dive into the proof of Theorem \ref{thm:couponcollector}.

\begin{proof}[Proof of Theorem \ref{thm:couponcollector}]
The proof is based on a first and second moment argument.

\medskip

\emph{Upper bound.} Let us show that for each $\varepsilon>0$, we have
\[\tau_n^\mathsf{cc}\leq(1+\varepsilon)\cdot\left(\frac{d!}{2^d}\cdot n^d\cdot\ln\left(n^d\right)\right)^{1/(d+1)}\quad\text{with high probability as $n\to\infty$.}\]
To this end, let us calculate the first moment of the random variables $\left.\mathbb{T}_n^d\middle\backslash\mathcal{B}_n^\mathsf{cc}(k)\right.$: we have
\[\mathbb{E}\left[\#\left(\mathbb{T}_n^d\middle\backslash\mathcal{B}_n^\mathsf{cc}(k)\right)\right]=\sum_{x\in\mathbb{T}_n^d}\mathbb{P}\left(x\notin\mathcal{B}_n^\mathsf{cc}(k)\right),\]
where for every $x\in\mathbb{T}_n^d$,
\[\begin{split}
\mathbb{P}\left(x\notin\mathcal{B}_n^\mathsf{cc}(k)\right)&=\mathbb{P}\left(\bigcap_{i=1}^k\left(X_i\notin\overline{B}(x,k-i)\right)\right)\\
&=\prod_{i=1}^k\left(1-\frac{\#\overline{B}(x,k-i)}{n^d}\right)=\prod_{j=0}^{k-1}\left(1-\frac{\#\overline{B}(0,j)}{n^d}\right).
\end{split}\]
Now, fix a parameter $c\in(0,\infty)$, and set
\[k_n(c)=\left\lfloor\left(c\cdot n^d\cdot\ln\left(n^d\right)\right)^{1/(d+1)}\right\rfloor.\]
We claim that as $n\to\infty$, we have
\begin{eqnarray}
\prod_{j=0}^{k_n(c)-1}\left(1-\frac{\#\overline{B}(0,j)}{n^d}\right)&=&\exp\left(-\sum_{j=0}^{k_n(c)-1}\frac{\#\overline{B}(0,j)}{n^d}+o(1)\right)\label{eq:couponcollector1stmoment1}\\
&=&\exp\left(-\frac{2^d}{(d+1)!}\cdot c\cdot\ln\left(n^d\right)\cdot(1+o(1))\right)\label{eq:couponcollector1stmoment2}
\end{eqnarray}
Taking this for granted, let us conclude the proof of the upper bound.
A naive first moment argument would consist in upper bounding the probability that $\mathbb{T}_n^d$ is not burned by time $k_n(c)$ as follows:
\[\begin{split}
\mathbb{P}\left(\tau_n^\mathsf{cc}>k_n(c)\right)&=\mathbb{P}\left(\left.\mathbb{T}_n^d\middle\backslash\mathcal{B}_n^\mathsf{cc}(k_n(c))\right.\neq\emptyset\right)\\
&\leq\mathbb{E}\left[\#\left(\mathbb{T}_n^d\middle\backslash\mathcal{B}_n^\mathsf{cc}(k_n(c))\right)\right]=n^d\cdot\prod_{j=0}^{k_n(c)-1}\left(1-\frac{\#\overline{B}(0,j)}{n^d}\right).
\end{split}\]
However, by \eqref{eq:couponcollector1stmoment2}, it would require $c>(d+1)!\cdot2^{-d}$ for the right-hand side to go to zero as $n\to\infty$, and this threshold is actually off by a factor of $(d+1)$.
To improve this first moment argument, let $\left(\overline{B}(x,r_n)\,;\,x\in A_n\right)$ be a covering of $\mathbb{T}_n^d$ by balls of radius $r_n:=\left\lfloor n^{d/(d+1)}\right\rfloor$, with centres $x\in\mathbb{T}_n^d$ more than $r_n$ apart from each other.
Since the balls $\left(\overline{B}(x,\lfloor r_n/2\rfloor)\,;\,x\in A_n\right)$ are disjoint, we have ${\#A_n\leq n^d\cdot\left(\#\overline{B}(0,\lfloor r_n/2\rfloor)\right)^{-1}}$.
In particular, we have $\#A_n\leq C\cdot n^{d/(d+1)}$ for some constant $C=C(d)\in(0,\infty)$ that does not depend on $n$.
Now, notice that we have the inclusion of events
\[\left(\tau_n^\mathsf{cc}>k_n(c)+r_n\right)=\left(\left.\mathbb{T}_n^d\middle\backslash\mathcal{B}_n^\mathsf{cc}(k_n(c)+r_n)\right.\neq\emptyset\right)\subset\bigcup_{x\in A_n}\left(x\notin\mathcal{B}_n^\mathsf{cc}(k_n(c))\right).\]
Thus, by the union bound, we obtain
\[\begin{split}
\mathbb{P}\left(\tau_n^\mathsf{cc}>k_n(c)+r_n\right)&\leq\sum_{x\in A_n}\mathbb{P}\left(x\notin\mathcal{B}_n^\mathsf{cc}(k_n(c))\right)\\
&=\#A_n\cdot\prod_{j=0}^{k_n(c)-1}\left(1-\frac{\#\overline{B}(0,j)}{n^d}\right)\leq C\cdot n^{d/(d+1)}\cdot\prod_{j=0}^{k_n(c)-1}\left(1-\frac{\#\overline{B}(0,j)}{n^d}\right).
\end{split}\]
Now, by \eqref{eq:couponcollector1stmoment2}, it only takes $c>d!\cdot2^{-d}$ to make the right-hand side go to zero as $n\to\infty$, which yields the desired upper bound since $k_n(c)+r_n\sim\left(c\cdot n^d\cdot\ln\left(n^d\right)\right)^{1/(d+1)}$ as $n\to\infty$.

Finally, it remains to prove \eqref{eq:couponcollector1stmoment1} and \eqref{eq:couponcollector1stmoment2}.
To this end, we write
\[\prod_{j=0}^{k_n(c)-1}\left(1-\frac{\#\overline{B}(0,j)}{n^d}\right)=\exp\left(-\sum_{j=0}^{k_n(c)-1}\frac{\#\overline{B}(0,j)}{n^d}+\Delta_n\right),\]
where
\[\Delta_n=\sum_{j=0}^{k_n(c)-1}\left(\frac{\#\overline{B}(0,j)}{n^d}+\ln\left(1-\frac{\#\overline{B}(0,j)}{n^d}\right)\right).\]
Then, since
\[\max_{j\in\llbracket0,k_n(c)-1\rrbracket}\frac{\#\overline{B}(0,j)}{n^d}\leq\frac{\#\overline{B}(0,k_n(c))}{n^d}\underset{n\to\infty}{=}n^{-d/(d+1)+o(1)}\longrightarrow0,\]
we may bound $\Delta_n$ by using the inequality $|u+\ln(1-u)|\leq u^2$ for all $u\in[0,1/2]$.
For all $n\in\mathbb{N}^*$ sufficiently large so that $\#\overline{B}(0,k_n(c))\cdot n^{-d}\leq1/2$, we get
\[|\Delta_n|\leq\sum_{j=0}^{k_n(c)-1}\left(\frac{\#\overline{B}(0,j)}{n^d}\right)^2\leq k_n(c)\cdot\left(\frac{\#\overline{B}(0,k_n(c))}{n^d}\right)^2\underset{n\to\infty}{=}n^{-d/(d+1)+o(1)}\longrightarrow0,\]
which proves \eqref{eq:couponcollector1stmoment1}.
Equation \eqref{eq:couponcollector1stmoment2} now readily follows: we have
\[\sum_{j=0}^{k_n(c)-1}\frac{\#\overline{B}(0,j)}{n^d}\sim\frac{1}{n^d}\cdot\frac{2^d}{d!}\cdot\frac{k_n(c)^{d+1}}{d+1}\sim\frac{2^d}{(d+1)!}\cdot c\cdot\ln\left(n^d\right)\quad\text{as $n\to\infty$.}\]

\medskip

\emph{Lower bound.}
Let us prove that for each $c<d!\cdot2^{-d}$, we have $\tau_n^\mathsf{cc}>k_n(c)$ with high probability as $n\to\infty$.
To this end, let us calculate the second moment of the random variable $\#\left(\mathbb{T}_n^d\middle\backslash\mathcal{B}_n^\mathsf{cc}(k_n(c))\right)$.
Indeed, by the Cauchy--Schwarz inequality, we have
\[\mathbb{P}\left(\tau_n^\mathsf{cc}>k_n(c)\right)=\mathbb{P}\left(\left.\mathbb{T}_n^d\middle\backslash\mathcal{B}_n^\mathsf{cc}(k_n(c))\right.\neq\emptyset\right)\geq\frac{\mathbb{E}\left[\#\left(\mathbb{T}_n^d\middle\backslash\mathcal{B}_n^\mathsf{cc}(k_n(c))\right)\right]^2}{\mathbb{E}\left[\#\left(\mathbb{T}_n^d\middle\backslash\mathcal{B}_n^\mathsf{cc}(k_n(c))\right)^2\right]}.\]
Thus, it suffices to show that for each $c<d!\cdot2^{-d}$, we have
\begin{equation}\label{eq:couponcollector2ndmomentgoal}
\frac{\mathbb{E}\left[\#\left(\mathbb{T}_n^d\middle\backslash\mathcal{B}_n^\mathsf{cc}(k_n(c))\right)^2\right]}{\mathbb{E}\left[\#\left(\mathbb{T}_n^d\middle\backslash\mathcal{B}_n^\mathsf{cc}(k_n(c))\right)\right]^2}\longrightarrow1\quad\text{as $n\to\infty$.}
\end{equation}
The second moment of the random variable $\#\left(\mathbb{T}_n^d\middle\backslash\mathcal{B}_n^\mathsf{cc}(k_n(c))\right)$ can be calculated as follows: we have
\[\mathbb{E}\left[\#\left(\mathbb{T}_n^d\middle\backslash\mathcal{B}_n^\mathsf{cc}(k_n(c))\right)^2\right]=\sum_{x_1,x_2\in\mathbb{T}_n^d}\mathbb{P}\left(x_1,x_2\notin\mathcal{B}_n^\mathsf{cc}(k_n(c))\right),\]
where for every $x_1,x_2\in\mathbb{T}_n^d$,
\[\begin{split}
\mathbb{P}\left(x_1,x_2\notin\mathcal{B}_n^\mathsf{cc}(k_n(c))\right)&=\mathbb{P}\left(\bigcap_{i=1}^{k_n(c)}\left(X_i\notin\left(\overline{B}(x_1,k_n(c)-i)\cup\overline{B}(x_2,k_n(c)-i)\right)\right)\right)\\
&=\prod_{i=1}^{k_n(c)}\left(1-\frac{\#\left(\overline{B}(x_1,k_n(c)-i)\cup\overline{B}(x_2,k_n(c)-i)\right)}{n^d}\right)\\
&=\prod_{j=0}^{k_n(c)-1}\left(1-\frac{\#\left(\overline{B}(0,j)\cup\overline{B}(x_2-x_1,j)\right)}{n^d}\right).
\end{split}\]
Thus, we get
\[\mathbb{E}\left[\#\left(\mathbb{T}_n^d\middle\backslash\mathcal{B}_n^\mathsf{cc}(k_n(c))\right)^2\right]=n^d\cdot\sum_{x\in\mathbb{T}_n^d}\prod_{j=0}^{k_n(c)-1}\left(1-\frac{\#\left(\overline{B}(0,j)\cup\overline{B}(x,j)\right)}{n^d}\right),\]
and it follows that
\begin{eqnarray*}
\lefteqn{\frac{\mathbb{E}\left[\#\left(\mathbb{T}_n^d\middle\backslash\mathcal{B}_n^\mathsf{cc}(k_n(c))\right)^2\right]}{\mathbb{E}\left[\#\left(\mathbb{T}_n^d\middle\backslash\mathcal{B}_n^\mathsf{cc}(k_n(c))\right)\right]^2}}\\
&=&\frac{1}{n^d}\cdot\sum_{x\in\mathbb{T}_n^d}\prod_{j=0}^{k_n(c)-1}\left(\left(1-\frac{\#\left(\overline{B}(0,j)\cup\overline{B}(x,j)\right)}{n^d}\right)\cdot\left(1-\frac{\#\overline{B}(0,j)}{n^d}\right)^{-2}\right).
\end{eqnarray*}
Now, we claim that as $n\to\infty$, we have
\begin{equation}\label{eq:couponcollector2ndmoment}
\prod_{j=0}^{k_n(c)-1}\left(1-\frac{\#\left(\overline{B}(0,j)\cup\overline{B}(x,j)\right)}{n^d}\right)=\exp\left(-\sum_{j=0}^{k_n(c)-1}\frac{\#\left(\overline{B}(0,j)\cup\overline{B}(x,j)\right)}{n^d}+o(1)\right)
\end{equation}
uniformly in $x\in\mathbb{T}_n^d$.
Indeed, we can write
\[\prod_{j=0}^{k_n(c)-1}\left(1-\frac{\#\left(\overline{B}(0,j)\cup\overline{B}(x,j)\right)}{n^d}\right)=\exp\left(-\sum_{j=0}^{k_n(c)-1}\frac{\#\left(\overline{B}(0,j)\cup\overline{B}(x,j)\right)}{n^d}+\Delta_n(x)\right),\]
where
\[\Delta_n(x)=\sum_{j=0}^{k_n(c)-1}\left(\frac{\#\left(\overline{B}(0,j)\cup\overline{B}(x,j)\right)}{n^d}+\ln\left(1-\frac{\#\left(\overline{B}(0,j)\cup\overline{B}(x,j)\right)}{n^d}\right)\right).\]
Then, since
\[\max_{j\in\llbracket0,k_n(c)-1\rrbracket}\frac{\#\left(\overline{B}(0,j)\cup\overline{B}(x,j)\right)}{n^d}\leq\frac{2\cdot\#\overline{B}(0,k_n(c))}{n^d}\underset{n\to\infty}{=}n^{-d/(d+1)+o(1)}\longrightarrow0,\]
we may bound $\Delta_n(x)$ by using the inequality $|u+\ln(1-u)|\leq u^2$ for all $u\in[0,1/2]$.
For all $n\in\mathbb{N}^*$ sufficiently large so that $\left(2\cdot\#\overline{B}(0,k_n(c))\right)\cdot n^{-d}\leq1/2$, we get
\[|\Delta_n(x)|\leq\sum_{j=0}^{k_n(c)-1}\left(\frac{\#\left(\overline{B}(0,j)\cup\overline{B}(x,j)\right)}{n^d}\right)^2\leq k_n(c)\cdot\left(\frac{2\cdot\#\overline{B}(0,k_n(c))}{n^d}\right)^2.\]
This proves \eqref{eq:couponcollector2ndmoment}, since the right-hand side does not depend on $x$ and goes to zero as $n\to\infty$.

Now, combining \eqref{eq:couponcollector1stmoment1} and \eqref{eq:couponcollector2ndmoment}, we get that as $n\to\infty$, we have
\begin{eqnarray*}
\lefteqn{\prod_{j=0}^{k_n(c)-1}\left(\left(1-\frac{\#\left(\overline{B}(0,j)\cup\overline{B}(x,j)\right)}{n^d}\right)\cdot\left(1-\frac{\#\overline{B}(0,j)}{n^d}\right)^{-2}\right)}\\
&=&\exp\left(\sum_{j=0}^{k_n(c)-1}\frac{\#\left(\overline{B}(0,j)\cap\overline{B}(x,j)\right)}{n^d}+o(1)\right)\quad\text{uniformly in $x\in\mathbb{T}_n^d$.}
\end{eqnarray*}
We deduce that
\begin{equation}\label{eq:couponcollector2ndmomenteq}
\frac{\mathbb{E}\left[\#\left(\mathbb{T}_n^d\middle\backslash\mathcal{B}_n^\mathsf{cc}(k_n(c))\right)^2\right]}{\mathbb{E}\left[\#\left(\mathbb{T}_n^d\middle\backslash\mathcal{B}_n^\mathsf{cc}(k_n(c))\right)\right]^2}\underset{n\to\infty}{\sim}\frac{1}{n^d}\cdot\sum_{x\in\mathbb{T}_n^d}\exp\left(\sum_{j=0}^{k_n(c)-1}\frac{\#\left(\overline{B}(0,j)\cap\overline{B}(x,j)\right)}{n^d}\right).
\end{equation}
Finally, we have everything at hand to prove that for each $c<d!\cdot2^{-d}$, the convergence \eqref{eq:couponcollector2ndmomentgoal} holds.
We upper bound the right-hand side of \eqref{eq:couponcollector2ndmomenteq} as follows: since $\overline{B}(0,j)\cap\overline{B}(x,j)=\emptyset$ as soon as $d(0,x)>2k_n(c)$, we have
\[\frac{1}{n^d}\cdot\sum_{x\in\mathbb{T}_n^d}\exp\left(\sum_{j=0}^{k_n(c)-1}\frac{\#\left(\overline{B}(0,j)\cap\overline{B}(x,j)\right)}{n^d}\right)\leq1+\frac{1}{n^d}\cdot\#\overline{B}(0,2k_n(c))\cdot\exp\left(\sum_{j=0}^{k_n(c)-1}\frac{\#\overline{B}(0,j)}{n^d}\right).\]
Now, observe that on the one hand, we have $\#\overline{B}(0,2k_n(c))\cdot n^{-d}=n^{-d/(d+1)+o(1)}$ as $n\to\infty$, and on the other hand, we have
\[\sum_{j=0}^{k_n(c)-1}\frac{\#\overline{B}(0,j)}{n^d}\sim\frac{1}{n^d}\cdot\frac{2^d}{d!}\cdot\frac{k_n(c)^{d+1}}{d+1}\sim\frac{2^d}{d!}\cdot c\cdot\ln\left(n^{d/(d+1)}\right)\quad\text{as $n\to\infty$,}\]
with $\left.2^d\middle/d!\right.\cdot c<1$ by assumption.
Therefore, we obtain that
\[\frac{1}{n^d}\cdot\#\overline{B}(0,2k_n(c))\cdot\exp\left(\sum_{j=0}^{k_n(c)-1}\frac{\#\overline{B}(0,j)}{n^d}\right)\longrightarrow0\quad\text{as $n\to\infty$,}\]
which yields \eqref{eq:couponcollector2ndmomentgoal}.
This completes the proof of the lower bound, and of the theorem.
\end{proof}

\section{Rejection sampling}\label{sec:rejectionsampling}

In this section, we consider the rejection sampling burning process $\mathcal{B}_n^\mathsf{rs}(\cdot)$, and prove Theorem \ref{thm:rejectionsampling}.
Recall that for each $n\in\mathbb{N}^*$, we consider a sequence $(Y_1,Y_2,\ldots)$ of random variables with values in $\mathbb{T}_n^d$ which satisfies the following condition: for each $k\in\mathbb{N}$, conditionally on $Y_1,\ldots,Y_k$, the random variable $Y_{k+1}$ has uniform distribution on the complement of ${\mathsf{Burned}_k(Y_1,\ldots,Y_k)}$, provided that $\mathsf{Burned}_k(Y_1,\ldots,Y_k)\varsubsetneq\mathbb{T}_n^d$.
Then, we set $\mathcal{B}_n^\mathsf{rs}(k)=\mathsf{Burned}_k(Y_1,\ldots,Y_k)$ for all  $k\in\mathbb{N}$, and we let ${\tau_n^\mathsf{rs}=\inf\left\{k\in\mathbb{N}:\mathcal{B}_n^\mathsf{rs}(k)=\mathbb{T}_n^d\right\}}$.
Recall that the distribution of the random burning process $\mathcal{B}_n^\mathsf{rs}(\cdot)$ and that of the random burning number $\tau_n^\mathsf{rs}$ are not affected by the degrees of freedom that the previous condition leaves on the distribution of $(Y_1,Y_2,\ldots)$, for as soon as ${\mathcal{B}_n^\mathsf{rs}(k)=\mathbb{T}_n^d}$, we have ${\mathcal{B}_n^\mathsf{rs}(k+1)=\mathbb{T}_n^d}$.
In Subsection~\ref{subsec:coupling}, we construct such a sequence $(Y_1,Y_2,\ldots)$ out of a collection of independent random variables with uniform distribution on $\mathbb{T}_n^d$, by using rejection sampling.
Now, recall the statement of Theorem \ref{thm:rejectionsampling}, which consists of \eqref{eq:asymptoticrandomburning} and \eqref{eq:functionalconvergence}, and involves the maximal solution ${y:{[0,T[}\rightarrow\mathbb{R}}$ of the Cauchy problem \eqref{eq:blasius}.
As explained in the introduction (see below \eqref{eq:blasius}), it follows from \cite[Theorem 1]{blasius} that this solution blows up in finite time, i.e, we have $T\in(0,\infty)$.
Moreover, we have
\begin{equation}\label{eq:blowuprate}
y^{(d)}(t)\asymp\frac{1}{(T-t)^{d+1}}\quad\text{as $t\to T^-$.}
\end{equation}
We first prove \eqref{eq:functionalconvergence} in Subsection \ref{subsec:functionalconvergence}, and then prove \eqref{eq:asymptoticrandomburning} in Subsection \ref{subsec:asymptoticrandomburning} (indeed, Equation \eqref{eq:asymptoticrandomburning} is not a direct consequence of \eqref{eq:functionalconvergence}).

\subsection{Coupling arguments}\label{subsec:coupling}

For each $n\in\mathbb{N}^*$, let $\left(X_i^h\,;\,i,h\in\mathbb{N}^*\right)$ be a collection of independent random variables with uniform distribution on $\mathbb{T}_n^d$.
In this subsection, we construct a version of the burning process $\mathcal{B}_n^\mathsf{rs}(\cdot)$ out of the random variables $\left(X_i^h\,;\,i,h\in\mathbb{N}^*\right)$, by using rejection sampling.
Then, we use these same random variables to couple the rejection sampling burning process with intermediate burning processes, which will be instrumental in the proof of Theorem \ref{thm:rejectionsampling}.
For each $k\in\mathbb{N}$, we denote by $\mathcal{F}_k$ the $\sigma$-algebra generated by the random variables $\left(X_i^h\,;\,\text{$i\in\llbracket1,k\rrbracket$ and $h\in\mathbb{N}^*$}\right)$.

\paragraph{A version of the rejection sampling burning process.}
Let $\left(\mathcal{B}_n^*(k)\,;\,k\in\mathbb{N}\right)$ be the burning process constructed out of the random variables $\left(X_i^h\,;\,i,h\in\mathbb{N}^*\right)$ as follows.
We set $\mathcal{B}_n^*(0)=\emptyset$, and by induction, for each $k\in\mathbb{N}$ such that ${\mathcal{B}_n^*(0),H^*_1,Y_1^*,\mathcal{B}_n^*(1),\ldots,H^*_k,Y_k^*,\mathcal{B}_n^*(k)}$ have been constructed, we set
\[H_{k+1}^*=\inf\left\{h\in\mathbb{N}^*:X_{k+1}^h\notin\mathcal{B}_n^*(k)\right\}\in\llbracket1,\infty\rrbracket,\]
and we let
\[Y_{k+1}^*=\begin{cases}
X_{k+1}^{H_{k+1}^*}&\text{if $H_{k+1}^*<\infty$,}\\
X_{k+1}^1&\text{otherwise.}
\end{cases}\]
Then, we set
\[\mathcal{B}_n^*(k+1)=\mathsf{Burned}_{k+1}\left(Y_1^*,\ldots,Y_{k+1}^*\right).\]
As stated in the following claim, the burning process $\mathcal{B}_n^*(\cdot)$ is a version of the rejection sampling burning process $\mathcal{B}_n^\mathsf{rs}(\cdot)$, for which the random variables $\left(Y_1^*,Y_2^*,\ldots\right)$ play the role of $(Y_1,Y_2,\ldots)$:

\begin{claim}\label{claim:rejectionsampling}
For each $k\in\mathbb{N}$, the following holds:
\begin{itemize}
\item The random subset $\mathcal{B}_n^*(k)$ and the random variables $Y_1^*,\ldots,Y_k^*$ are measurable with respect to $\mathcal{F}_k$,

\item The law of $Y_{k+1}^*$ conditionally on $\mathcal{F}_k$ is the uniform distribution on the complement of $\mathcal{B}_n^*(k)$, provided that ${\mathcal{B}_n^*(k)\varsubsetneq\mathbb{T}_n^d}$.
\end{itemize}
\end{claim}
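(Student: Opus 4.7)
The plan is to establish both items simultaneously by induction on $k \in \mathbb{N}$. The base case $k=0$ is immediate: $\mathcal{B}_n^*(0) = \emptyset$ is deterministic (hence $\mathcal{F}_0$-measurable), so $H_1^* = 1$ and $Y_1^* = X_1^1$, whose law is uniform on $\mathbb{T}_n^d = \mathbb{T}_n^d \setminus \mathcal{B}_n^*(0)$.

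For the inductive step, the measurability at step $k+1$ will follow directly from the recursive definition: assuming $\mathcal{B}_n^*(k)$ and $Y_1^*, \ldots, Y_k^*$ are $\mathcal{F}_k$-measurable, the quantity $H_{k+1}^*$ is an explicit function of $\mathcal{B}_n^*(k)$ and of the variables $(X_{k+1}^h)_{h \geq 1}$, both of which are $\mathcal{F}_{k+1}$-measurable; hence so are $Y_{k+1}^*$ and ${\mathcal{B}_n^*(k+1) = \mathsf{Burned}_{k+1}(Y_1^*,\ldots,Y_{k+1}^*)}$.

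The heart of the argument is the conditional distribution statement, which is essentially a standard rejection sampling computation. The key observation is that by construction, the family $(X_{k+1}^h)_{h \geq 1}$ is independent of $\mathcal{F}_k$ and consists of i.i.d.\ uniform variables on $\mathbb{T}_n^d$. Working on the event $\{\mathcal{B}_n^*(k) = B\} \in \mathcal{F}_k$ for a fixed $B \varsubsetneq \mathbb{T}_n^d$, I would compute, for any $y \in \mathbb{T}_n^d \setminus B$,
\[
\mathbb{P}\bigl(Y_{k+1}^* = y \,\bigl|\, \mathcal{F}_k\bigr) \cdot \mathbf{1}_{\{\mathcal{B}_n^*(k)=B\}} = \mathbf{1}_{\{\mathcal{B}_n^*(k)=B\}} \cdot \sum_{h=1}^\infty \left(\frac{\#B}{n^d}\right)^{h-1} \cdot \frac{1}{n^d} = \frac{\mathbf{1}_{\{\mathcal{B}_n^*(k)=B\}}}{n^d - \#B},
\]
where the event $\{H_{k+1}^* = h\} \cap \{X_{k+1}^h = y\}$ has conditional probability $(\#B/n^d)^{h-1} \cdot n^{-d}$ on $\{\mathcal{B}_n^*(k) = B\}$. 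Summing over the (finite) partition by the possible values of $\mathcal{B}_n^*(k)$ with $\mathcal{B}_n^*(k) \varsubsetneq \mathbb{T}_n^d$ then yields the claimed uniform distribution on $\mathbb{T}_n^d \setminus \mathcal{B}_n^*(k)$.

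This is essentially routine; there is no real obstacle. The only subtlety is the handling of the event $\{\mathcal{B}_n^*(k) = \mathbb{T}_n^d\}$, on which $H_{k+1}^* = \infty$ and $Y_{k+1}^*$ is set to $X_{k+1}^1$ purely as a placeholder. This is harmless since the claim explicitly restricts to the complementary event, and the choice of $Y_{k+1}^*$ past the burning time does not affect the law of $\mathcal{B}_n^*(\cdot)$ nor of $\tau_n^\mathsf{rs}$.
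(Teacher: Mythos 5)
Your proof is correct and is precisely the standard rejection-sampling argument that the authors had in mind when they wrote "we omit the proof, which is straightforward": the measurability statements follow by a direct induction on the recursive definitions, and the conditional law is computed by conditioning on $\{\mathcal{B}_n^*(k)=B\}\in\mathcal{F}_k$, using independence of $(X_{k+1}^h)_{h\geq 1}$ from $\mathcal{F}_k$, and summing the geometric series. Nothing to add.
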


We omit the proof, which is straightforward.
Now, this coupling is also useful to introduce the following intermediate burning processes $\left(\mathcal{B}_n^p(\cdot)\,;\,p\in\mathbb{N}^*\right)$, which will be instrumental in the proof of Theorem \ref{thm:rejectionsampling}.

\paragraph{The intermediate burning processes $\left(\mathcal{B}_n^p(\cdot)\,;\,p\in\mathbb{N}^*\right)$.}
Let $\left(\mathcal{B}_n^p(\cdot)\,;\,p\in\mathbb{N}\right)$ be the sequences of random subsets of $\mathbb{T}_n^d$ constructed out of the random variables $\left(X_i^h\,;\,i,h\in\mathbb{N}^*\right)$ as follows.
For ${p=0}$, we set $\mathcal{B}_n^0(k)=\emptyset$ for all $k\in\mathbb{N}$.
Then, by induction, for each $p\in\mathbb{N}$ such that $\mathcal{B}_n^0(\cdot),\ldots,\mathcal{B}_n^p(\cdot)$ have been constructed, we construct $\mathcal{B}_n^{p+1}(\cdot)$ as follows.
For all $k\in\mathbb{N}$, we set
\[H_{k+1}^{p+1}=\inf\left\{h\in\mathbb{N}^*:X_{k+1}^h\notin\mathcal{B}_n^p(k)\right\}\in\llbracket1,\infty\rrbracket,\]
and we let
\[Y_{k+1}^{p+1}=\begin{cases}
X_{k+1}^{H_{k+1}^{p+1}}&\text{if $H_{k+1}^{p+1}<\infty$,}\\
X_{k+1}^1&\text{otherwise.}
\end{cases}\]
Then, we set
\[\mathcal{B}_n^{p+1}(k)=\mathsf{Burned}_k\left(Y_1^{p+1},\ldots,Y_k^{p+1}\right)\quad\text{for all $k\in\mathbb{N}$.}\]
Compare with the definition of $\mathcal{B}_n^*(\cdot)$, and note that we have the following analogue of Claim \ref{claim:rejectionsampling}:

\begin{claim}\label{claim:rejectionsamplingp}
For each $p\in\mathbb{N}$, the following holds for all $k\in\mathbb{N}$:
\begin{itemize}
\item The random subset $\mathcal{B}_n^p(k)$ and the random variables $Y_1^{p+1},\ldots,Y_k^{p+1}$ are measurable with respect to $\mathcal{F}_k$,

\item The law of $Y_{k+1}^{p+1}$ conditionally on $\mathcal{F}_k$ is the uniform distribution on the complement of $\mathcal{B}_n^p(k)$, provided that ${\mathcal{B}_n^p(k)\varsubsetneq\mathbb{T}_n^d}$.
\end{itemize}
\end{claim}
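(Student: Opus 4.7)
The plan is to proceed by induction on $p \in \mathbb{N}$, simultaneously establishing both bullet points of the claim at the same inductive step. The only tool I need is the elementary \emph{rejection sampling lemma}: if $(Z_h)_{h \geq 1}$ are i.i.d.\ uniform on a finite set $S$ and independent of a random subset $B \varsubsetneq S$, then $H := \inf\{h \in \mathbb{N}^* : Z_h \notin B\}$ is almost surely finite and, conditionally on $B$, the random variable $Z_H$ has uniform distribution on $S \setminus B$, independent of $H$.

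For the base case $p = 0$, the set $\mathcal{B}_n^0(k) = \emptyset$ is trivially $\mathcal{F}_k$-measurable for every $k$. Since $X_{k+1}^1 \notin \emptyset$, we have $H_{k+1}^1 = 1$, so $Y_{k+1}^1 = X_{k+1}^1$ is uniform on $\mathbb{T}_n^d$ and independent of $\mathcal{F}_k$, as required.

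For the inductive step, I assume the claim holds for some $p \in \mathbb{N}$ and derive it for $p+1$. The first bullet for $p+1$ follows by a nested induction on $k$: the outer inductive hypothesis gives $\mathcal{B}_n^p(k) \in \mathcal{F}_k$, and since $(X_{k+1}^h)_{h \in \mathbb{N}^*}$ is $\mathcal{F}_{k+1}$-measurable, the definitions of $H_{k+1}^{p+1}$ and $Y_{k+1}^{p+1}$ show that these variables are $\mathcal{F}_{k+1}$-measurable, and hence so is $\mathcal{B}_n^{p+1}(k+1) = \mathsf{Burned}_{k+1}(Y_1^{p+1}, \ldots, Y_{k+1}^{p+1})$. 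For the second bullet, I fix $k \in \mathbb{N}$ and apply the rejection sampling lemma conditionally on $\mathcal{F}_k$: by the outer inductive hypothesis the set $B := \mathcal{B}_n^p(k)$ is $\mathcal{F}_k$-measurable, whereas $(X_{k+1}^h)_{h \in \mathbb{N}^*}$ is independent of $\mathcal{F}_k$, so on the event $\{B \varsubsetneq \mathbb{T}_n^d\}$ the conditional law of $Y_{k+1}^{p+1} = X_{k+1}^{H_{k+1}^{p+1}}$ given $\mathcal{F}_k$ is uniform on $\mathbb{T}_n^d \setminus B$, as claimed.

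There is no real obstacle here: the statement is a routine measurability and conditional-distribution exercise, and indeed the authors themselves describe the proof of the exactly analogous Claim \ref{claim:rejectionsampling} as ``straightforward''. The only minor subtlety worth flagging is the nested structure of the two inductions (the outer one on $p$ and the inner one on $k$ in the measurability bullet), but this is entirely mechanical and costs nothing.
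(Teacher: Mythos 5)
Your proof is correct and supplies exactly the ``straightforward'' argument the paper omits. The structure is the natural one: an outer induction on $p$, with the $\mathcal{F}_k$-measurability of $\mathcal{B}_n^p(\cdot)$ propagating to that of $Y_\cdot^{p+1}$ and hence of $\mathcal{B}_n^{p+1}(\cdot)$, and the conditional-law statement following from the elementary rejection-sampling lemma applied conditionally on $\mathcal{F}_k$ (using that $\mathcal{B}_n^p(k)\in\mathcal{F}_k$ while $(X_{k+1}^h)_h$ is independent of $\mathcal{F}_k$); the inner induction on $k$ you flag is in fact dispensable, since $\mathcal{B}_n^{p+1}(k)$ depends on $\mathcal{B}_n^p(\cdot)$ but not recursively on $\mathcal{B}_n^{p+1}(j)$ for $j<k$, so the measurability of $Y_1^{p+1},\ldots,Y_k^{p+1}$ follows directly from the outer hypothesis.
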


Again we omit the proof, which is straightforward.
Now, with this coupling, we have the following inclusions:

\begin{claim}\label{claim:inclusionintermediate}
For each $p\in\mathbb{N}$, we have $\mathcal{B}_n^p(k)\subset\mathcal{B}_n^{p+1}(k)\subset\mathcal{B}_n^*(k)$ for all $k\in\mathbb{N}$.
In particular, we have $H_i^{p+1}\leq H_i^{p+2}\leq H_i^*$ for all $i\in\mathbb{N}^*$.
\end{claim}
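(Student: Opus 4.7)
My plan is to prove both inclusions simultaneously by an outer induction on $p$, each step of which contains a nested induction on $k$. For each $p\in\mathbb{N}$ the statement I will prove is that $\mathcal{B}_n^p(k)\subset\mathcal{B}_n^{p+1}(k)$ and $\mathcal{B}_n^p(k)\subset\mathcal{B}_n^*(k)$ for every $k\in\mathbb{N}$; combining these for consecutive values of $p$ gives the chain $\mathcal{B}_n^p(k)\subset\mathcal{B}_n^{p+1}(k)\subset\mathcal{B}_n^*(k)$ of the claim. The $H$-ordering then follows at once: since $\mathcal{B}_n^p\subset\mathcal{B}_n^{p+1}\subset\mathcal{B}_n^*$ pointwise in $k$, the defining infima of $H_i^{p+1}$, $H_i^{p+2}$ and $H_i^*$ are taken over successively smaller sets of indices, so $H_i^{p+1}\leq H_i^{p+2}\leq H_i^*$.

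The key structural observation is that each of the three processes obeys the one-step recursion
\[\mathcal{B}(k+1)=\overline{N}_1\!\bigl(\mathcal{B}(k)\bigr)\cup\{Y_{k+1}\},\]
where $\overline{N}_1(A):=\bigcup_{x\in A}\overline{B}(x,1)$ is the closed $1$-neighbourhood. In particular, each burning process is non-decreasing in $k$, and any inclusion at time $k$ propagates through $\overline{N}_1$ to time $k+1$, reducing the inductive step to locating the newly ignited vertex in the target process.

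The base case $p=0$ is immediate since $\mathcal{B}_n^0(k)=\emptyset$. For the inductive step I assume both inclusions at the previous level, so that $\mathcal{B}_n^{p-1}(k)\subset\mathcal{B}_n^p(k)$ and $\mathcal{B}_n^{p-1}(k)\subset\mathcal{B}_n^*(k)$ for all $k$; by the same infimum argument this gives $H_i^p\leq H_i^{p+1}$ and $H_i^p\leq H_i^*$ for every $i$. I then run the nested induction on $k$: using the recursion and the monotonicity of $\overline{N}_1$, the step reduces to showing $Y_{k+1}^p\in\mathcal{B}_n^{p+1}(k+1)$ and $Y_{k+1}^p\in\mathcal{B}_n^*(k+1)$. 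I treat the first (the second being identical with $\mathcal{B}_n^*$ in place of $\mathcal{B}_n^{p+1}$). If $H_{k+1}^p=H_{k+1}^{p+1}$ then $Y_{k+1}^p=Y_{k+1}^{p+1}$ and the inclusion is trivial. Otherwise $H_{k+1}^p<H_{k+1}^{p+1}$, and by the minimality of $H_{k+1}^{p+1}$ as the first $h$ with $X_{k+1}^h\notin\mathcal{B}_n^p(k)$, the sample $X_{k+1}^{H_{k+1}^p}$ must lie in $\mathcal{B}_n^p(k)$; hence $Y_{k+1}^p\in\mathcal{B}_n^p(k)\subset\mathcal{B}_n^{p+1}(k)\subset\mathcal{B}_n^{p+1}(k+1)$ by the inner inductive hypothesis and monotonicity of $\mathcal{B}_n^{p+1}(\cdot)$ in $k$.

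There is no genuine technical obstacle: the whole argument rests on the one-line observation that any sample rejected by a larger rejection set but accepted by a smaller one must already belong to the larger burned region, and so contributes no new vertex. The degenerate case $H=\infty$, corresponding to the rejection set being all of $\mathbb{T}_n^d$, is absorbed into the same inclusion chain (the comparison $H_i^p\leq H_i^{p+1}$ forces the two infinities to occur simultaneously, so one falls into the equality sub-case).
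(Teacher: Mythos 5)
Your argument is correct, but it takes a genuinely different route from the paper's. The paper first establishes, by the same outer-induction-on-$p$ / inner-induction-on-$k$ scheme, the explicit identity
\[
\mathcal{B}_n^{p+1}(k)=\bigcup_{i=1}^k\bigcup_{h=1}^{H_i^{p+1}}\overline{B}\bigl(X_i^h,k-i\bigr),
\]
which makes visible that each rejected sample $X_i^h$ with $h<H_i^{p+1}$ is already in the burned region and so can be freely absorbed into the union; once this formula is available for two consecutive levels, the inclusions follow immediately from the monotonicity $H_i^{p+1}\leq H_i^{p+2}$. You instead bypass the identity entirely, working from the one-step recursion $\mathcal{B}(k+1)=\overline{N}_1(\mathcal{B}(k))\cup\{Y_{k+1}\}$ and reducing the inductive step to locating the single newly ignited vertex: either the two $H$'s agree and the new point is the same, or the smaller process accepts a sample that the larger one rejects, and that sample is then already burned. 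Both proofs rest on the same core observation (a sample rejected by the bigger rejection set but accepted by the smaller one is already in the bigger burned region), but yours is shorter and more elementary, whereas the paper's auxiliary identity gives a slightly more structural picture of what $\mathcal{B}_n^{p+1}$ looks like in terms of the raw samples. One small wording imprecision: the comparison $H_i^p\leq H_i^{p+1}$ does not force the two infinities to ``occur simultaneously'' — $H_i^p=\infty$ forces $H_i^{p+1}=\infty$, but not conversely. Fortunately it is precisely the implication $H_i^p=\infty\Rightarrow H_i^{p+1}=\infty$ that you need (it guarantees that in your second sub-case $H_{k+1}^p$ is finite, so that $Y_{k+1}^p=X_{k+1}^{H_{k+1}^p}$), so the proof is unaffected; just state the one-directional implication.
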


Let us give a detailed proof of this fact.

\begin{proof}
To begin with, let us prove by induction on $p\in\mathbb{N}$ that for all $k\in\mathbb{N}$, we have
\begin{equation}\label{eq:inclusion}
\mathcal{B}_n^p(k)\subset\mathcal{B}_n^{p+1}(k)=\bigcup_{i=1}^k\bigcup_{h=1}^{H_i^{p+1}}\overline{B}\left(X_i^h,k-i\right).
\end{equation}
Equation \eqref{eq:inclusion} holds for $p=0$: indeed, we have $\mathcal{B}_n^p(k)=\emptyset$ for all $k\in\mathbb{N}$, and thus $H_i^1=1$ for all $i\in\mathbb{N}^*$.
Next, fix $p\in\mathbb{N}$ such that \eqref{eq:inclusion} holds, and let us prove that for all $k\in\mathbb{N}$, we have
\begin{equation}\label{eq:inclusion'''}
\mathcal{B}_n^{p+1}(k)\subset\mathcal{B}_n^{p+2}(k)=\bigcup_{i=1}^k\bigcup_{h=1}^{H_i^{p+2}}\overline{B}\left(X_i^h,k-i\right).
\end{equation}
First, we prove by induction on $k\in\mathbb{N}$ that
\begin{equation}\label{eq:inclusion'}
\mathcal{B}_n^{p+2}(k)=\bigcup_{i=1}^k\bigcup_{h=1}^{H_i^{p+2}}\overline{B}\left(X_i^h,k-i\right).
\end{equation}
This equality is vacuous for $k=0$.
Then, fix $k\in\mathbb{N}$ such that \eqref{eq:inclusion'} holds, and let us prove the inclusion
\begin{equation}\label{eq:inclusion''}
\bigcup_{i=1}^{k+1}\bigcup_{h=1}^{H_i^{p+2}}\overline{B}\left(X_i^h,k+1-i\right)\subset\mathcal{B}_n^{p+2}(k+1),
\end{equation}
the converse inclusion being automatic.
On the one hand, by the induction hypothesis \eqref{eq:inclusion'} and since $\mathcal{B}_n^{p+2}(\cdot)$ is a burning process, we have
\[\bigcup_{i=1}^k\bigcup_{h=1}^{H_i^{p+2}}\overline{B}\left(X_i^h,k+1-i\right)\subset\bigcup_{w\in\mathcal{B}_n^{p+2}(k)}\overline{B}(w,1)\subset\mathcal{B}_n^{p+2}(k+1).\]
On the other hand, we also have
\[\bigcup_{h=1}^{H_{k+1}^{p+2}}\left\{X_{k+1}^h\right\}\subset\mathcal{B}_n^{p+2}(k+1),\]
since for each $h\in\mathbb{N}^*$ such that $h<H_{k+1}^{p+2}$, we have $X_{k+1}^h\in\mathcal{B}_n^{p+1}(k)\subset\mathcal{B}_n^{p+2}(k)$.
Altogether, we obtain the desired inclusion \eqref{eq:inclusion''}, which completes the proof of \eqref{eq:inclusion'} by induction.
Now, to complete the proof of \eqref{eq:inclusion'''}, it remains to show that $\mathcal{B}_n^{p+1}(k)\subset\mathcal{B}_n^{p+2}(k)$ for all $k\in\mathbb{N}$.
By the induction hypothesis \eqref{eq:inclusion}, we have $\mathcal{B}_n^p(k)\subset\mathcal{B}_n^{p+1}(k)$ for all $k\in\mathbb{N}$, which entails that $H_i^{p+1}\leq H_i^{p+2}$ for all $i\in\mathbb{N}^*$.
This readily yields the desired inclusion: for all $k\in\mathbb{N}$, we have
\[\mathcal{B}_n^{p+1}(k)=\bigcup_{i=1}^k\bigcup_{h=1}^{H_i^{p+1}}\overline{B}\left(X_i^h,k-i\right)\subset\bigcup_{i=1}^k\bigcup_{h=1}^{H_i^{p+2}}\overline{B}\left(X_i^h,k-i\right)=\mathcal{B}_n^{p+2}(k).\]
Finally, to complete the proof of the claim, it remains to show that for each $p\in\mathbb{N}$, we have $\mathcal{B}_n^p(k)\subset\mathcal{B}_n^*(k)$ for all $k\in\mathbb{N}$.
First, similarly as for \eqref{eq:inclusion'}, one can prove by induction on $k\in\mathbb{N}$ that
\[\mathcal{B}_n^*(k)=\bigcup_{i=1}^k\bigcup_{h=1}^{H_i^*}\overline{B}\left(X_i^h,k-i\right).\]
Let us avoid repeating these details.
From there, it is not difficult to prove the desired inclusion, namely that $\mathcal{B}_n^p(k)\subset\mathcal{B}_n^*(k)$ for all $k\in\mathbb{N}$, by induction on $p\in\mathbb{N}$.
This inclusion is vacuous for $p=0$, since $\mathcal{B}_n^0(k)=\emptyset$ for all $k\in\mathbb{N}$.
Next, fix $p\in\mathbb{N}$ such that $\mathcal{B}_n^p(k)\subset\mathcal{B}_n^*(k)$ for all $k\in\mathbb{N}$, and let us prove that $\mathcal{B}_n^{p+1}(k)\subset\mathcal{B}_n^*(k)$ for all $k\in\mathbb{N}$.
By the induction hypothesis, we have $H_i^{p+1}\leq H_i^*$ for all $i\in\mathbb{N}^*$, which readily yields the desired inclusion: for all $k\in\mathbb{N}$, we have
\[\mathcal{B}_n^{p+1}(k)=\bigcup_{i=1}^k\bigcup_{h=1}^{H_i^{p+1}}\overline{B}\left(X_i^h,k-i\right)\subset\bigcup_{i=1}^k\bigcup_{h=1}^{H_i^*}\overline{B}\left(X_i^h,k-i\right)=\mathcal{B}_n^*(k).\]
The proof of the claim is now complete.
\end{proof}

\subsection{Functional convergence}\label{subsec:functionalconvergence}

In this subsection, we prove the functional convergence \eqref{eq:functionalconvergence}.
We rely on the coupling of the rejection sampling burning process with the intermediate burning processes $\left(\mathcal{B}_n^p(\cdot)\,;\,p\in\mathbb{N}^*\right)$, introduced in the previous subsection.
The following proposition identifies the scaling limit of the processes $\#\mathcal{B}_n^p(\cdot)$ (which also turns out to be more convenient to state for the complements).
It involves the sequence of functions $\left(f_p:\mathbb{R}_+\rightarrow{[1,\infty[}\right)_{p\in\mathbb{N}}$ defined by induction as follows: set $f_0(t)=1$ for all $t\in\mathbb{R}_+$, and for each $p\in\mathbb{N}$, let
\[f_{p+1}(t)=\exp\left(\frac{2^d}{d!}\cdot\int_0^t(t-s)^d\cdot f_p(s)\mathrm{d}s\right)\quad\text{for all $t\in\mathbb{R}_+$.}\]

\begin{prop}\label{prop:limitingdensityBnp}
For each $p\in\mathbb{N}$, the following holds: for every $\tau\in(0,\infty)$, we have
\begin{equation}\label{eq:functionalconvergencep}
\sup_{t\in[0,\tau]}\left|\frac{\#\left(\mathbb{T}_n^d\middle\backslash\mathcal{B}_n^p\left(\left\lfloor t\cdot n^{d/(d+1)}\right\rfloor\right)\right)}{n^d}-\frac{1}{f_p(t)}\right|\overset{\mathbb{P}}{\longrightarrow}0\quad\text{as $n\to\infty$.}
\end{equation}
\end{prop}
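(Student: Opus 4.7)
I proceed by induction on $p\in\mathbb{N}$; the base case $p=0$ is immediate since $\mathcal{B}_n^0(k)=\emptyset$ and $f_0\equiv1$. For the inductive step, suppose the result holds for $p$ and let $k_n(t):=\lfloor tn^{d/(d+1)}\rfloor$, $Z_n(k):=\#(\mathbb{T}_n^d\setminus\mathcal{B}_n^{p+1}(k))$. The plan is a first and second moment argument. The starting point is the identity $\mathcal{B}_n^{p+1}(k)=\bigcup_{i=1}^{k}\bigcup_{h=1}^{H_i^{p+1}}\overline{B}(X_i^h,k-i)$ from Claim~\ref{claim:inclusionintermediate}: for fixed $x\in\mathbb{T}_n^d$, writing $A_i:=\overline{B}(x,k-i)$, $B_i:=\mathcal{B}_n^p(i-1)$ and $C_i:=\mathbb{T}_n^d\setminus B_i$, the event $\{x\notin\mathcal{B}_n^{p+1}(k)\}$ is equivalent to requiring that, for each $i$, none of $X_i^1,\ldots,X_i^{H_i^{p+1}}$ lies in $A_i$, which is in turn equivalent to the first $X_i^h\in A_i\cup C_i$ lying in $C_i\setminus A_i$. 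Conditionally on $\mathcal{F}_{i-1}$ (which determines $B_i$), the $X_i^h$ are i.i.d.\ uniform on $\mathbb{T}_n^d$, so by the memoryless property of the geometric sampling this has probability
\[r_i(x):=\frac{\#(C_i\setminus A_i)}{\#(A_i\cup C_i)}=\frac{\#(C_i\setminus A_i)}{\#C_i+\#(A_i\cap B_i)},\qquad\text{so that}\qquad 1-r_i(x)=\frac{\#A_i}{\#C_i+\#(A_i\cap B_i)},\]
and iterated conditioning yields $\mathbb{P}(x\notin\mathcal{B}_n^{p+1}(k))=\mathbb{E}\bigl[\prod_{i=1}^{k}r_i(x)\bigr]$.

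For the first moment, translation invariance gives $\mathbb{E}[Z_n(k)]/n^d=\mathbb{P}(0\notin\mathcal{B}_n^{p+1}(k))=\mathbb{E}[\prod_i r_i(0)]$. By the inductive hypothesis, on an event of probability tending to $1$ we have $\#C_i/n^d=(1+o(1))/f_p(s_i)$ uniformly in $i\leq k_n(\tau)$, where $s_i:=i/n^{d/(d+1)}$; since moreover $\#(A_i\cap B_i)\leq\#A_i=O(k_n(\tau)^d)=O(n^{d^2/(d+1)})\ll\#C_i\asymp n^d$, it follows that $1-r_i(0)=(1+o(1))\#A_i\cdot f_p(s_i)/n^d$ uniformly in $i$. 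A Riemann sum approximation then gives
\[\sum_{i=1}^{k_n(t)}\bigl(1-r_i(0)\bigr)\ =\ (1+o(1))\cdot\frac{2^d}{d!}\cdot\frac{1}{n^d}\sum_{i=1}^{k_n(t)}(k_n(t)-i)^d\,f_p(s_i)\ \longrightarrow\ \frac{2^d}{d!}\int_0^t(t-s)^d f_p(s)\,\mathrm{d}s\ =\ \log f_{p+1}(t),\]
and since $\max_i(1-r_i(0))=O(n^{-d/(d+1)})$, the expansion $\log r_i=-(1-r_i)+O((1-r_i)^2)$ shows that $\sum\log r_i=-\log f_{p+1}(t)+o(1)$ on this event, so $\prod_i r_i(0)\to 1/f_{p+1}(t)$ in probability. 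Bounded convergence then yields $\mathbb{E}[Z_n(k_n(t))]/n^d\to 1/f_{p+1}(t)$.

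For concentration, I compute
\[\mathrm{Var}\bigl(Z_n(k)/n^d\bigr)\ =\ \frac{1}{n^d}\sum_{z\in\mathbb{T}_n^d}\mathrm{Cov}\bigl(\mathbf{1}_{0\notin\mathcal{B}_n^{p+1}(k)},\,\mathbf{1}_{z\notin\mathcal{B}_n^{p+1}(k)}\bigr),\]
and split according to whether $\|z\|_1\leq 2k_n(t)$ or not. The near contribution is crudely bounded by $\#\{z:\|z\|_1\leq 2k_n(t)\}/n^d=O(n^{-d/(d+1)})=o(1)$. For far $z$, the balls $\overline{B}(0,k-i)$ and $\overline{B}(z,k-i)$ are disjoint for every $i\leq k_n(t)$, so the same rejection-sampling calculation applied to the union $A_i(0)\cup A_i(z)$ of size $2\#A_i$ yields $\mathbb{P}(0,z\notin\mathcal{B}_n^{p+1}(k))\to(1/f_{p+1}(t))^2$ uniformly, hence the corresponding covariances are $o(1)$ uniformly. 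Chebyshev's inequality then gives $Z_n(k_n(t))/n^d\to 1/f_{p+1}(t)$ in probability pointwise in $t$, and the uniform convergence over $t\in[0,\tau]$ follows by a standard monotonicity argument: both $t\mapsto Z_n(k_n(t))/n^d$ and $t\mapsto 1/f_{p+1}(t)$ are non-increasing and the latter is continuous, so pointwise convergence on a finite grid upgrades to uniform convergence. The main obstacle is correctly identifying $r_i$: one must account for \emph{all} i.i.d.\ samples $X_i^h$ (including those rejected during the geometric sampling at step $i$), not only the accepted sample $Y_i^{p+1}$; this is what introduces the extra factor $f_p(s_i)$ inside $1-r_i$, and is what produces the correct limit $\log f_{p+1}(t)$ (rather than, e.g., $\log f_1(t)$, which would arise from a naïve analysis based only on the accepted samples).
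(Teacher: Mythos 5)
Your overall strategy — induction on $p$, first and second moment computation for $\#(\mathbb{T}_n^d\setminus\mathcal{B}_n^{p+1}(k_n(t)))$, then a probabilistic Dini argument to upgrade pointwise to uniform — is the same as the paper's (the paper merely packages the inductive step into a separate Lemma~\ref{lem:key}). Your choice to compute the per-step probability by accounting for all the raw samples $X_i^1,\ldots,X_i^{H_i^{p+1}}$ via the representation $\mathcal{B}_n^{p+1}(k)=\bigcup_i\bigcup_{h\le H_i^{p+1}}\overline{B}(X_i^h,k-i)$ from Claim~\ref{claim:inclusionintermediate}, rather than working directly with the conditional law of the accepted sample $Y_i^{p+1}$ as the paper does, is a legitimate alternative and your formula $1-r_i(x)=\#A_i/(\#C_i+\#(A_i\cap B_i))$ is correct for the per-step event you define. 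Your closing remark, however, is a red herring: the paper's approach (accepted sample with conditional law uniform on $C_i$) produces exactly the same factor $\#C_i/n^d\to1/f_p(s_i)$ in the per-step probability, so the ``na\"ive analysis'' you warn against is not what the paper does, and is not where the difficulty lies.

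The genuine gap is the assertion ``iterated conditioning yields $\mathbb{P}(x\notin\mathcal{B}_n^{p+1}(k))=\mathbb{E}[\prod_{i=1}^kr_i(x)]$.'' This identity is false: $r_i(x)=\mathbb{P}(E_i\,|\,\mathcal{F}_{i-1})$ is a \emph{random} $\mathcal{F}_{i-1}$-measurable variable, and conditioning on $\mathcal{F}_{k-1}$ gives $\mathbb{E}[\prod_{i\le k}\mathbf{1}(E_i)]=\mathbb{E}[r_k\prod_{i<k}\mathbf{1}(E_i)]$; when you try to iterate, $r_k$ and $\mathbf{1}(E_{k-1})$ are both functions of $\{X_{k-1}^h\}$ and do not factor under $\mathbb{E}[\,\cdot\,|\,\mathcal{F}_{k-2}]$. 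Your subsequent step (``bounded convergence then yields $\mathbb{E}[Z_n(k_n(t))]/n^d\to1/f_{p+1}(t)$'') therefore rests on an identity that does not hold, and the convergence in probability of $\prod_ir_i$ that you do establish does not by itself control $\mathbb{P}(\bigcap_iE_i)=\mathbb{E}[\prod_i\mathbf{1}(E_i)]$. This is precisely the point the paper handles with the Doob-martingale normalisation \eqref{eq:mart} (which \emph{is} an exact identity, unlike yours) together with the truncation to the good events $G_i$ and the variables $W_i$ in \eqref{eq:marttrunc}, to ensure the normalising factors stay bounded away from zero. Without such an argument — or some other mechanism that converts the in-probability control of $\prod_ir_i$ into control of $\mathbb{P}(\bigcap_iE_i)$ — the first moment estimate, and hence the proof, is incomplete. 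The same issue recurs in your second-moment computation, where you again implicitly replace $\mathbb{P}(\bigcap_iE_i^{0,z})$ by an expectation of a product of conditional probabilities.
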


Before getting into the proof of Proposition \ref{prop:limitingdensityBnp}, in view of \eqref{eq:functionalconvergence}, let us highlight the link between the sequence $(f_p:\mathbb{R}_+\rightarrow[1,\infty[)_{p\in\mathbb{N}}$ and the function $y^{(d)}:{[0,T[}\rightarrow\mathbb{R}$.

\begin{claim}\label{claim:picard}
We have
\begin{equation}\label{eq:yd}
y^{(d)}(t)=\exp\left(\frac{2^d}{d!}\cdot\int_0^t(t-s)^d\cdot y^{(d)}(s)\mathrm{d}s\right)\quad\text{for all $t\in[0,T[$.}
\end{equation}
Moreover, for each $\tau\in[0,T[$, we have $\sup_{t\in[0,\tau]}\left|f_p(t)-y^{(d)}(t)\right|\rightarrow0$ as $p\to\infty$.
\end{claim}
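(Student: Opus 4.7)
The plan is to first derive the integral identity \eqref{eq:yd} by direct manipulation of the Cauchy problem \eqref{eq:blasius}, and then to recognise the recursion defining $(f_p)$ as a Picard-type iteration whose unique fixed point is $y^{(d)}$.

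For \eqref{eq:yd}, I would first show that $y^{(d)}\geq 1$ on $[0,T[$. Since $y^{(d)}(0)=1>0$, continuity gives $y^{(d)}>0$ on a small initial interval; iterated integration using the vanishing of $y,y',\ldots,y^{(d-1)}$ at $0$ yields $y^{(k)}(t)>0$ for $t>0$ and $k=0,\ldots,d-1$ there, and \eqref{eq:blasius} then gives $(y^{(d)})'=2^d\, y\cdot y^{(d)}\geq 0$, so $y^{(d)}$ is non-decreasing. A bootstrap extends this to all of $[0,T[$, giving $y^{(d)}\geq 1$. Dividing \eqref{eq:blasius} by $y^{(d)}$ and integrating from $0$ to $t$ yields $\ln y^{(d)}(t)=2^d\int_0^t y(s)\,ds$. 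Taylor's formula with integral remainder, together with the zero initial conditions for $y$, gives $y(s)=\frac{1}{(d-1)!}\int_0^s(s-u)^{d-1}y^{(d)}(u)\,du$; swapping the order of integration via Fubini then produces $\int_0^t y(s)\,ds=\frac{1}{d!}\int_0^t(t-u)^d y^{(d)}(u)\,du$, which is exactly \eqref{eq:yd}.

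Now fix $\tau\in[0,T[$ and let $\Phi$ denote the operator $(\Phi g)(t):=\exp\bigl(\frac{2^d}{d!}\int_0^t(t-s)^d g(s)\,ds\bigr)$ acting on non-negative continuous functions on $[0,\tau]$, so that $f_{p+1}=\Phi f_p$ and, by \eqref{eq:yd}, $y^{(d)}$ is a fixed point of $\Phi$. Since $\Phi$ is monotone for the pointwise order and $f_0\equiv 1\leq y^{(d)}$, an easy induction shows that $(f_p)$ is pointwise non-decreasing and uniformly bounded on $[0,\tau]$ by the continuous function $y^{(d)}$. Dominated convergence then produces a pointwise limit $f_\infty\leq y^{(d)}$ satisfying $f_\infty=\Phi f_\infty$; in particular $f_\infty$ is continuous.

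To identify $f_\infty$ with $y^{(d)}$, I would invoke uniqueness for \eqref{eq:blasius}: any continuous fixed point $g$ of $\Phi$ gives, via $\tilde y(t):=\frac{1}{(d-1)!}\int_0^t(t-s)^{d-1}g(s)\,ds$, a function with $\tilde y(0)=\cdots=\tilde y^{(d-1)}(0)=0$, $\tilde y^{(d)}=g$, and, by differentiating the identity $\ln g(t)=2^d\int_0^t\tilde y(s)\,ds$ (which is the same Fubini computation as above, read backwards), $\tilde y^{(d+1)}=2^d\,\tilde y\,\tilde y^{(d)}$; so $\tilde y$ solves \eqref{eq:blasius} on $[0,\tau]$. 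The Cauchy--Lipschitz theorem, applied to the associated first-order system whose right-hand side is polynomial and hence locally Lipschitz, forces $\tilde y=y$ and hence $g=y^{(d)}$ on $[0,\tau]$, so $f_\infty=y^{(d)}$ there. Finally, since $(f_p)$ is a monotone sequence of continuous functions converging pointwise to the continuous limit $y^{(d)}$ on the compact interval $[0,\tau]$, Dini's theorem upgrades the convergence to uniform. The main subtlety I expect is the last identification step: one has to make sure that the fixed-point equation for $\Phi$ really encodes the full Cauchy problem (all $d+1$ initial conditions together with the ODE) so that Cauchy--Lipschitz can be brought to bear and pin $f_\infty$ down.
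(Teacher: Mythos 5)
Your proof is correct and follows essentially the same route as the paper: for \eqref{eq:yd}, integrate \eqref{eq:blasius} as a first-order linear ODE in $y^{(d)}$ and convert $\int_0^t y$ into a $d$-fold weighted integral of $y^{(d)}$; for the convergence, show $f_p\uparrow y^{(d)}$ pointwise via monotonicity of the map $g\mapsto\Phi g$ and the bound $f_p\leq y^{(d)}$, pass to the limit in the fixed-point equation, identify the limit via uniqueness for the Cauchy problem, and finish with Dini's theorem. The only cosmetic difference is that the paper obtains $y^{(d)}(t)=\exp\bigl(2^d\int_0^t y\bigr)$ directly from linear ODE theory (so positivity of $y^{(d)}$ is automatic, making your preliminary bootstrap for $y^{(d)}\geq1$ unnecessary), and it constructs the $d$-fold primitive $F_d$ of the limit explicitly rather than writing out the Cauchy--Lipschitz invocation, though these amount to the same thing.
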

\begin{proof}
We start with the proof of \eqref{eq:yd}: by integrating \eqref{eq:blasius} as a first-order differential equation in $y^{(d)}$, we get
\[y^{(d)}(t)=\exp\left(2^d\cdot\int_0^ty(s)\mathrm{d}s\right)\quad\text{for all $t\in[0,T[$,}\]
which yields \eqref{eq:yd} by successive integration by parts.
Now, we turn to the proof of the second point.
By Dini's theorem, it suffices to prove that for each $t\in[0,T[$, we have $f_p(t)\uparrow y^{(d)}(t)$ as $p\uparrow\infty$.
To begin with, it is easy to check by induction on $p\in\mathbb{N}$ that $f_{p+1}(t)\geq f_p(t)$ for all $t\in\mathbb{R}_+$.
Next, let us check, again by induction on $p\in\mathbb{N}$, that $f_p(t)\leq y^{(d)}(t)$ for all $t\in[0,T[$.
First, from \eqref{eq:yd}, we infer that $y^{(d)}(t)\geq0$ for all $t\in[0,T[$, which in turn implies that $y^{(d)}(t)\geq1=f_0(t)$ for all $t\in[0,T[$.
Next, fix $p\in\mathbb{N}$ such that $f_p(t)\leq y^{(d)}(t)$ for all $t\in[0,T[$, and let us prove that $f_{p+1}(t)\leq y^{(d)}(t)$ for all $t\in[0,T[$.
By \eqref{eq:yd}, we get that for all $t\in[0,T[$, we have
\[f_{p+1}(t)=\exp\left(\frac{2^d}{d!}\cdot\int_0^t(t-s)^d\cdot f_p(s)\mathrm{d}s\right)\leq\exp\left(\frac{2^d}{d!}\cdot\int_0^t(t-s)^d\cdot y^{(d)}(s)\mathrm{d}s\right)=y^{(d)}(t),\]
which concludes the proof by induction.
Now, by monotonicity and boundedness, we deduce that the limit ${f(t):=\lim_{p\uparrow\infty}f_p(t)}$ exists for all $t\in[0,T[$.
Moreover, by the monotone convergence theorem, we obtain
\[f(t)=\exp\left(\frac{2^d}{d!}\cdot\int_0^t(t-s)^d\cdot f(s)\mathrm{d}s\right)\quad\text{for all $t\in[0,T[$.}\]
This identity entails that $f(t)=y^{(d)}(t)$ for all $t\in[0,T[$.
To prove this, set $F_0(t)=f(t)$ for all $t\in[0,T[$, and by induction, for each $i\in\mathbb{N}$ such that $F_0,\ldots,F_i$ have been constructed, let $F_{i+1}(t)=\int_0^tF_i(s)\mathrm{d}s$ for all $t\in[0,T[$.
By successive integration by parts, we get
\[f(t)=\exp\left(2^d\cdot\int_0^tF_d(s)\mathrm{d}s\right)\quad\text{for all $t\in[0,T[$.}\]
In particular, the function $f$ is differentiable, and we have
\[f'(t)=2^d\cdot F_d(t)\cdot\exp\left(2^d\cdot\int_0^tF_d(s)\mathrm{d}s\right)\quad\text{for all $t\in[0,T[$.}\]
In other words, the function $F_d$ is $(d+1)$ times differentiable, and we have
\[F_d^{(d+1)}(t)=2^d\cdot F_d(t)\cdot F_d^{(d)}(t)\quad\text{for all $t\in[0,T[$.}\]
Since moreover we have $F_d(0)=\ldots=F_d^{(d-1)}(0)=0$ and $F_d^{(d)}(0)=f(0)=1$, we conclude that $F_d(t)=y(t)$ for all $t\in[0,T[$, which finally yields that $f(t)=F_d^{(d)}(t)=y^{(d)}(t)$ for all $t\in[0,T[$, as desired.
\end{proof}

Now, let us come to the proof of Proposition \ref{prop:limitingdensityBnp}.
The key to the proof is the following lemma:

\begin{lem}\label{lem:key}
Let $(\mathcal{A}_n(k)\,;\,k\in\mathbb{N})$ be a sequence of random subsets of $\mathbb{T}_n^d$, and let $(Z_1,Z_2,\ldots)$ be a sequence of random variables with values in $\mathbb{T}_n^d$.
Suppose that $(\mathcal{A}_n(k)\,;\,k\in\mathbb{N})$ and $(Z_1,Z_2,\ldots)$ are adapted to the filtration $(\mathcal{F}_k)_{k\in\mathbb{N}}$, and that for each $k\in\mathbb{N}$, conditionally on $\mathcal{F}_k$, the random variable $Z_{k+1}$ has uniform distribution on the complement of $\mathcal{A}_n(k)$, provided that $\mathcal{A}_n(k)\varsubsetneq\mathbb{T}_n^d$.
We set
\[\mathcal{B}_n(k)=\mathsf{Burned}_k(Z_1,\ldots,Z_k)\quad\text{for all $k\in\mathbb{N}$.}\]
Now, fix $\tau\in(0,\infty)$, and assume that
\[\sup_{t\in[0,\tau]}\left|\frac{\#\left(\mathbb{T}_n^d\middle\backslash\mathcal{A}_n\left(\left\lfloor t\cdot n^{d/(d+1)}\right\rfloor\right)\right)}{n^d}-\frac{1}{f(t)}\right|\overset{\mathbb{P}}{\longrightarrow}0\quad\text{as $n\to\infty$,}\]
where $f:[0,\tau]\rightarrow[1,\infty[$ is continuous.
Then, as $n\to\infty$, we have
\[\sup_{t\in[0,\tau]}\left|\frac{\#\left(\mathbb{T}_n^d\middle\backslash\mathcal{B}_n\left(\left\lfloor t\cdot n^{d/(d+1)}\right\rfloor\right)\right)}{n^d}-\frac{1}{g(t)}\right|\longrightarrow0\quad\text{in probability,}\]
where
\[g(t)=\exp\left(\frac{2^d}{d!}\cdot\int_0^t(t-s)^d\cdot f(s)\mathrm{d}s\right)\quad\text{for all $t\in[0,\tau]$.}\]
\end{lem}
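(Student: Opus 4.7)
The plan is to exploit the rejection sampling coupling of Subsection~\ref{subsec:coupling}. First I would enlarge the probability space to carry an iid family $(X_i^h)_{i,h\in\mathbb{N}^*}$ of uniform random variables on $\mathbb{T}_n^d$, set $H_i:=\inf\{h:X_i^h\notin\mathcal{A}_n(i-1)\}$, and realise the given sequence as $Z_i=X_i^{H_i}$. Then I would introduce the augmented burning set
\[
\widetilde{\mathcal{B}}_n(k):=\bigcup_{i=1}^{k}\bigcup_{h=1}^{H_i}\overline{B}(X_i^h,k-i)\supset\mathcal{B}_n(k),
\]
and observe that in the intended applications — where $\mathcal{A}_n=\mathcal{B}_n^p$ is a burning process contained in $\mathcal{B}_n$ — fire propagation forces the equality $\widetilde{\mathcal{B}}_n(k)=\mathcal{B}_n(k)$: each rejected proposal $X_i^h\in\mathcal{A}_n(i-1)\subset\mathcal{B}_n(i-1)$ already has $\overline{B}(X_i^h,k-i)\subset\mathcal{B}_n(k)$ by propagation. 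It thus suffices to establish the functional convergence with $\mathcal{B}_n$ replaced by $\widetilde{\mathcal{B}}_n$.

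Next I would compute the first moment. For fixed $x\in\mathbb{T}_n^d$ and $B_i:=\overline{B}(x,k-i)$, the event $\{x\notin\widetilde{\mathcal{B}}_n(k)\}$ requires that at each step $i\le k$, none of the iid proposals $X_i^1,\ldots,X_i^{H_i}$ falls in $B_i$. Conditioning on $\mathcal{F}_{i-1}$, this is a competing-stopping event for the iid sequence $(X_i^h)_h$ (continuation on $\mathcal{A}_n(i-1)\setminus B_i$, success on $(\mathbb{T}_n^d\setminus\mathcal{A}_n(i-1))\setminus B_i$, failure on $B_i$), whose conditional success probability works out to
\[
\frac{\#((\mathbb{T}_n^d\setminus\mathcal{A}_n(i-1))\setminus B_i)}{\#((\mathbb{T}_n^d\setminus\mathcal{A}_n(i-1))\setminus B_i)+\#B_i}=1-\frac{\#B_i}{\#(\mathbb{T}_n^d\setminus\mathcal{A}_n(i-1))}+O\!\left(\frac{(\#B_i)^2}{\#(\mathbb{T}_n^d\setminus\mathcal{A}_n(i-1))^2}\right).
\]
Crucially the leading term depends on $\mathcal{A}_n$ only through its \emph{global} complement size; the local distribution of $\mathcal{A}_n$ inside $B_i$ drops out at first order. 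Invoking the hypothesis $\#(\mathbb{T}_n^d\setminus\mathcal{A}_n(i-1))/n^d\to 1/f(s_i)$ with $s_i:=i\,n^{-d/(d+1)}$ converts this to $1-f(s_i)\cdot\#B_i/n^d+o(\#B_i/n^d)$; multiplying over $i\in\llbracket 1,\lfloor tn^{d/(d+1)}\rfloor\rrbracket$, the resulting Riemann sum converges to $\tfrac{2^d}{d!}\int_0^t(t-s)^df(s)\,ds=\log g(t)$, yielding $\mathbb{P}(x\notin\widetilde{\mathcal{B}}_n(\lfloor tn^{d/(d+1)}\rfloor))\to 1/g(t)$ and hence, by summation over $x$, $\mathbb{E}[\#(\mathbb{T}_n^d\setminus\widetilde{\mathcal{B}}_n(\lfloor tn^{d/(d+1)}\rfloor))]/n^d\to 1/g(t)$.

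For concentration I would carry out the analogous two-point computation for $\mathbb{P}(x,y\notin\widetilde{\mathcal{B}}_n(k))$, replacing $B_i$ by $B_i^x\cup B_i^y$ in the competing-stopping analysis. Only pairs $(x,y)$ at distance $\le 2k=O(n^{d/(d+1)})$ contribute nontrivially to the correlation — a $O(n^{-d/(d+1)})$ fraction of all pairs — and the rest factorise to first order; this gives $\mathrm{Var}[\#(\mathbb{T}_n^d\setminus\widetilde{\mathcal{B}}_n(k))]/n^{2d}\to 0$, whence by Cauchy--Schwarz in-probability convergence at each fixed $t\in[0,\tau]$. To upgrade pointwise to uniform convergence on $[0,\tau]$ I would use that $t\mapsto\#(\mathbb{T}_n^d\setminus\mathcal{B}_n(\lfloor tn^{d/(d+1)}\rfloor))/n^d$ is nonincreasing and $t\mapsto 1/g(t)$ is continuous and nonincreasing: a Dini-type argument then closes the proof.

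The main technical obstacle will be the bookkeeping in the telescoping product for the first moment: the per-step error $o(\#B_i/n^d)$ coming from the density hypothesis must be controlled uniformly in $i\in\llbracket 1,\lfloor\tau n^{d/(d+1)}\rfloor\rrbracket$ so that its accumulation across $\Theta(n^{d/(d+1)})$ steps remains negligible. This should follow from the assumed \emph{uniform-in-$t$} convergence of $\#(\mathbb{T}_n^d\setminus\mathcal{A}_n)/n^d$ together with the fact that each $\#B_i/n^d=O(n^{-d/(d+1)})$, making the quadratic corrections summable — but it requires careful handling.
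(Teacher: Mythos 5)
Your overall strategy — first- and second-moment estimates at each fixed $t$, then a probabilistic Dini argument to upgrade to uniform convergence — is exactly the paper's. The augmented set $\widetilde{\mathcal{B}}_n(k)=\bigcup_{i\le k}\bigcup_{h\le H_i}\overline{B}(X_i^h,k-i)$ is a neat reformulation of the one-step computation, and your competing-stopping formula for the conditional probability that all proposals at step $i$ avoid $B_i$, namely $\#\bigl((\mathbb{T}_n^d\setminus\mathcal{A}_n(i-1))\setminus B_i\bigr)\big/\bigl(\#\bigl((\mathbb{T}_n^d\setminus\mathcal{A}_n(i-1))\setminus B_i\bigr)+\#B_i\bigr)$, is correct; but be aware that the identification $\widetilde{\mathcal{B}}_n(k)=\mathcal{B}_n(k)$ needs the side hypothesis $\mathcal{A}_n(k)\subset\mathcal{B}_n(k)$, which is not part of the lemma's assumptions (it does hold in the only application, $\mathcal{A}_n=\mathcal{B}_n^p\subset\mathcal{B}_n^{p+1}=\mathcal{B}_n$). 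The paper instead works directly with $\mathcal{B}_n$ and the events $A_i=\{Z_i\notin B_i\}$, which buys essentially the same thing.

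The genuine gap is the step ``multiplying over $i$''. The per-step conditional probabilities are $\mathcal{F}_{i-1}$-measurable \emph{random} quantities, depending on $\#(\mathbb{T}_n^d\setminus\mathcal{A}_n(i-1))$, which your hypothesis controls only in probability; so $\mathbb{P}(\bigcap_i A_i)$ is not the product of their expectations, and one cannot simply pass a deterministic Riemann-sum limit inside the telescoping product. The paper's central device, absent from your plan, is the exact martingale identity
\[\mathbb{E}\left[\frac{\prod_{i=1}^{k_n}W_i}{\prod_{i=1}^{k_n}\mathbb{E}\left[W_i\mid\mathcal{F}_{i-1}\right]}\right]=1,\]
applied not to $\mathbf{1}(A_i)$ directly but to a truncated variable $W_i$ that equals $\mathbf{1}(A_i)$ on the good event $G_i=\{\#(\mathbb{T}_n^d\setminus\mathcal{A}_n(i-1))/n^d\ge(2\sup_{[0,\tau]}f)^{-1}\}$ and equals $1$ off it, so that $\mathbb{E}[W_i\mid\mathcal{F}_{i-1}]$ is deterministically bounded away from $0$. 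One then shows $\mathbb{P}(\bigcap_i G_i)\to1$ from the hypothesis on $\mathcal{A}_n$ and compares $\exp\bigl(\sum_i\#B_i\cdot f((i-1)n^{-d/(d+1)})/n^d\bigr)$ with $\prod_i\mathbb{E}[W_i\mid\mathcal{F}_{i-1}]^{-1}$ on that event, uniformly in $x_1,x_2$. You flag this bookkeeping as the ``main technical obstacle'' that ``requires careful handling'', but it is the actual content of the lemma, and without the truncated-martingale identity and good-event argument your plan does not close. The second-moment and Dini steps you outline are correct and coincide with the paper's.
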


Before getting into the proof of this lemma, let us explain how it yields the result of Proposition \ref{prop:limitingdensityBnp}.

\begin{proof}[Proof of Proposition \ref{prop:limitingdensityBnp} assuming Lemma \ref{lem:key}]
We proceed by induction on $p\in\mathbb{N}$.
The functional convergence \eqref{eq:functionalconvergencep} is vacuous for $p=0$.
Next, if $p\in\mathbb{N}$ is such that \eqref{eq:functionalconvergencep} holds, then by Claim \ref{claim:rejectionsamplingp}, we may apply Lemma \ref{lem:key} with $\mathcal{A}_n(\cdot)=\mathcal{B}_n^p(\cdot)$ and $(Z_1,Z_2,\ldots)=\left(Y_1^{p+1},Y_2^{p+1},\ldots\right)$, hence $\mathcal{B}_n(\cdot)=\mathcal{B}_n^{p+1}(\cdot)$.
Thus, we obtain that as $n\to\infty$, we have
\[\sup_{t\in[0,\tau]}\left|\frac{\#\left(\mathbb{T}_n^d\middle\backslash\mathcal{B}_n^{p+1}(\left\lfloor t\cdot n^{d/(d+1)}\right\rfloor\right)}{n^d}-\frac{1}{f_{p+1}(t)}\right|\longrightarrow0\quad\text{in probability,}\]
which concludes the proof by induction.
\end{proof}

Now, it remains to prove the key lemma.

\begin{proof}[Proof of Lemma \ref{lem:key}]
By a probabilistic version of Dini's theorem (see, e.g, \cite[Lemma 11.6]{chef}), it suffices to show that for each $t\in\mathbb{R}_+$, we have
\begin{equation}\label{eq:convoitedconv}
\frac{\#\left(\mathbb{T}_n^d\middle\backslash\mathcal{B}_n\left(\left\lfloor t\cdot n^{d/(d+1)}\right\rfloor\right)\right)}{n^d}\overset{\mathbb{P}}{\longrightarrow}\exp\left(-\frac{2^d}{d!}\cdot\int_0^t(t-s)^d\cdot f(s)\mathrm{d}s\right)\quad\text{as $n\to\infty$.}
\end{equation}
The proof of this convergence is based on a first and second moment argument: fix $t\in\mathbb{R}_+$, set $k_n=\left\lfloor t\cdot n^{d/(d+1)}\right\rfloor$ for short, and let us estimate the first and second moments of the random variables $\#\left(\mathbb{T}_n^d\middle\backslash\mathcal{B}_n(k_n)\right)$ as $n\to\infty$.
To this end, let us estimate the probabilities $\left(\mathbb{P}\left(x_1,x_2\notin\mathcal{B}_n(k_n)\right)\,;\,x_1,x_2\in\mathbb{T}_n^d\right)$.
For each $x_1,x_2\in\mathbb{T}_n^d$, we set
\[B_i^{x_1,x_2}=\overline{B}(x_1,k_n-i)\cup\overline{B}(x_2,k_n-i)\quad\text{for all $i\in\llbracket1,k_n\rrbracket$.}\]
We claim that as $n\to\infty$, we have
\begin{equation}\label{eq:keylemestimate}
\exp\left(\sum_{i=1}^{k_n}\frac{\#B_i^{x_1,x_2}}{n^d}\cdot f\left(\frac{i-1}{n^{d/(d+1)}}\right)\right)\cdot\mathbb{P}(x_1,x_2\notin\mathcal{B}_n(k_n))\longrightarrow1\quad\text{uniformly in $x_1,x_2\in\mathbb{T}_n^d$.}
\end{equation}
Taking this for granted, let us conclude our first and second moment argument.
First, by \eqref{eq:keylemestimate}, we obtain that
\[\exp\left(\sum_{i=1}^{k_n}\frac{\#B_i^{x,x}}{n^d}\cdot f\left(\frac{i-1}{n^{d/(d+1)}}\right)\right)\cdot\mathbb{P}\left(x\notin\mathcal{B}_n(k_n)\right)\underset{n\to\infty}{\longrightarrow}1\quad\text{uniformly in $x\in\mathbb{T}_n^d$.}\]
Notice that
\[\sum_{i=1}^{k_n}\frac{\#B_i^{x,x}}{n^d}\cdot f\left(\frac{i-1}{n^{d/(d+1)}}\right)=\sum_{i=1}^{k_n}\frac{\#\overline{B}(0,k_n-i)}{n^d}\cdot f\left(\frac{i-1}{n^{d/(d+1)}}\right)\]
does not depend on $x$.
Moreover, observe that
\begin{eqnarray}
\lefteqn{\sum_{i=1}^{k_n}\frac{\#\overline{B}(0,k_n-i)}{n^d}\cdot f\left(\frac{i-1}{n^{d/(d+1)}}\right)}\notag\\
&=&\int_0^{k_n}\frac{\#\overline{B}(0,k_n-\lceil u\rceil)}{n^d}\cdot f\left(\frac{\lfloor u\rfloor}{n^{d/(d+1)}}\right)\mathrm{d}u\notag\\
&=&\int_0^{k_n\cdot n^{-d/(d+1)}}\frac{\#\overline{B}\left(0,k_n-\left\lceil s\cdot n^{d/(d+1)}\right\rceil\right)}{\left(n^{d/(d+1)}\right)^d}\cdot f\left(\frac{\left\lfloor s\cdot n^{d/(d+1)}\right\rfloor}{n^{d/(d+1)}}\right)\mathrm{d}s\notag\\
&\underset{n\to\infty}{\longrightarrow}&\frac{2^d}{d!}\cdot\int_0^t(t-s)^d\cdot f(s)\mathrm{d}s\label{eq:untag},
\end{eqnarray}
where the last equality is obtained via the change of variables $u=s\cdot n^{d/(d+1)}$, and the conclusion by the bounded convergence theorem.
Altogether, we get that
\begin{eqnarray}
\mathbb{E}\left[\#\left(\mathbb{T}_n^d\middle\backslash\mathcal{B}_n(k_n)\right)\right]&=&\sum_{x\in\mathbb{T}_n^d}\mathbb{P}(x\notin\mathcal{B}_n(k_n))\notag\\
&\underset{n\to\infty}{\sim}&\sum_{x\in\mathbb{T}_n^d}\exp\left(-\sum_{i=1}^{k_n}\frac{\#B_i^{x,x}}{n^d}\cdot f\left(\frac{i-1}{n^{d/(d+1)}}\right)\right)\notag\\
&=&n^d\cdot\exp\left(-\sum_{i=1}^{k_n}\frac{\#\overline{B}(0,k_n-i)}{n^d}\cdot f\left(\frac{i-1}{n^{d/(d+1)}}\right)\right)\label{eq:keylem1stmomentbis}\\
&\sim&n^d\cdot\exp\left(-\frac{2^d}{d!}\cdot\int_0^t(t-s)^d\cdot f(s)\mathrm{d}s\right).\label{eq:keylem1stmoment}
\end{eqnarray}
Let us now move on to the second moment.
By \eqref{eq:keylemestimate}, we obtain that
\begin{eqnarray*}
\mathbb{E}\left[\#\left(\mathbb{T}_n^d\middle\backslash\mathcal{B}_n(k_n)\right)^2\right]&=&\sum_{x_1,x_2\in\mathbb{T}_n^d}\mathbb{P}(x_1,x_2\notin\mathcal{B}_n(k_n))\\
&\underset{n\to\infty}{\sim}&\sum_{x_1,x_2\in\mathbb{T}_n^d}\exp\left(-\sum_{i=1}^{k_n}\frac{\#B_i^{x_1,x_2}}{n^d}\cdot f\left(\frac{i-1}{n^{d/(d+1)}}\right)\right)\\
&=&n^d\cdot\sum_{x\in\mathbb{T}_n^d}\exp\left(-\sum_{i=1}^{k_n}\frac{\#B_i^{0,x}}{n^d}\cdot f\left(\frac{i-1}{n^{d/(d+1)}}\right)\right).
\end{eqnarray*}
Thus, using \eqref{eq:keylem1stmomentbis}, we get
\[\frac{\mathbb{E}\left[\#\left(\mathbb{T}_n^d\middle\backslash\mathcal{B}_n(k_n)\right)^2\right]}{\mathbb{E}\left[\#\left(\mathbb{T}_n^d\middle\backslash\mathcal{B}_n(k_n)\right)\right]^2}\underset{n\to\infty}{\sim}\frac{1}{n^d}\cdot\sum_{x\in\mathbb{T}_n^d}\exp\left(\sum_{i=1}^{k_n}\frac{\#\left(\overline{B}(0,k_n-i)\cap\overline{B}(x,k_n-i)\right)}{n^d}\cdot f\left(\frac{i-1}{n^{d/(d+1)}}\right)\right).\]
Then, we upper bound the last term as follows: since $\overline{B}(0,k_n-i)\cap\overline{B}(x,k_n-i)=\emptyset$ as soon as $d(0,x)>2k_n$, we have
\begin{eqnarray*}
\lefteqn{\frac{1}{n^d}\cdot\sum_{x\in\mathbb{T}_n^d}\exp\left(\sum_{i=1}^{k_n}\frac{\#\left(\overline{B}(0,k_n-i)\cap\overline{B}(x,k_n-i)\right)}{n^d}\cdot f\left(\frac{i-1}{n^{d/(d+1)}}\right)\right)}\\
&\leq&1+\frac{1}{n^d}\cdot\#\overline{B}(0,2k_n)\cdot\exp\left(\sum_{i=1}^{k_n}\frac{\#\overline{B}(0,k_n-i)}{n^d}\cdot f\left(\frac{i-1}{n^{d/(d+1)}}\right)\right).
\end{eqnarray*}
Finally, by \eqref{eq:untag}, and since $\#\overline{B}(0,2k_n)\cdot n^{-d}\asymp n^{-d/(d+1)}\rightarrow0$ as $n\to\infty$, we obtain that
\begin{equation*}
\frac{\mathbb{E}\left[\#\left(\mathbb{T}_n^d\middle\backslash\mathcal{B}_n(k_n)\right)^2\right]}{\mathbb{E}\left[\#\left(\mathbb{T}_n^d\middle\backslash\mathcal{B}_n(k_n)\right)\right]^2}\longrightarrow1\quad\text{as $n\to\infty$,}
\end{equation*}
which together with \eqref{eq:keylem1stmoment} yields \eqref{eq:convoitedconv}, by the Bienaymé--Chebyshev inequality.

To complete the proof of the lemma, it remains to prove \eqref{eq:keylemestimate}.
For each $x_1,x_2\in\mathbb{T}_n^d$, we set ${A_i^{x_1,x_2}=\left(Z_i\notin B_i^{x_1,x_2}\right)}$ for all $i\in\llbracket1,k_n\rrbracket$, so that ${\left(x_1,x_2\notin\mathcal{B}_n(k_n)\right)=\bigcap_{i=1}^{k_n}A_i^{x_1,x_2}}$.
To begin with, notice that for each $i\in\llbracket1,k_n\rrbracket$, we have $A_i^{x_1,x_2}\in\mathcal{F}_i$.
Moreover, almost surely, on the event $\left(\mathcal{A}_n(i-1)\varsubsetneq\mathbb{T}_n^d\right)$, we have
\[\mathbb{P}\left(A_i^{x_1,x_2}\,\middle|\,\mathcal{F}_{i-1}\right)=1-\frac{\#B_i^{x_1,x_2}}{\#\left(\mathbb{T}_n^d\middle\backslash\mathcal{A}_n(i-1)\right)}=1-\frac{\#B_i^{x_1,x_2}\cdot n^{-d}}{\#\left(\mathbb{T}_n^d\middle\backslash\mathcal{A}_n(i-1)\right)\cdot n^{-d}}.\]
In particular, we get that almost surely, on the event $\bigcap_{i=1}^{k_n}\left(\mathcal{A}_n(i-1)\varsubsetneq\mathbb{T}_n^d\right)$, we have
\begin{equation}\label{eq:presque}
\prod_{i=1}^{k_n}\mathbb{P}\left(A_i^{x_1,x_2}\,\middle|\,\mathcal{F}_{i-1}\right)=\prod_{i=1}^{k_n}\left(1-\frac{\#B_i^{x_1,x_2}\cdot n^{-d}}{\#\left(\mathbb{T}_n^d\middle\backslash\mathcal{A}_n(i-1)\right)\cdot n^{-d}}\right).
\end{equation}
Now, if we knew that $\mathbb{P}\left(A_i^{x_1,x_2}\,\middle|\,\mathcal{F}_{i-1}\right)\geq1/2$ for all $i\in\llbracket1,k_n\rrbracket$, say, then we would have the following identity:
\begin{equation}\label{eq:mart}
\mathbb{E}\left[\prod_{i=1}^j\mathbb{P}\left(A_i^{x_1,x_2}\,\middle|\,\mathcal{F}_{i-1}\right)^{-1}\cdot\mathbf{1}\left(\bigcap_{i=1}^jA_i^{x_1,x_2}\right)\right]=1\quad\text{for all $j\in\llbracket0,k_n\rrbracket$.}
\end{equation}
This can be checked by a straightforward induction on $j$ (the identity is vacuous for $j=0$, and if $j\in\llbracket0,k_n-1\rrbracket$ is such that \eqref{eq:mart} holds, then by conditioning on $\mathcal{F}_j$, we derive \eqref{eq:mart} with $(j+1)$ in place of $j$).
In turn, we could use \eqref{eq:mart} and the triangle inequality to write
\begin{eqnarray}
\lefteqn{\left|\exp\left(\sum_{i=1}^{k_n}\frac{\#B_i^{x_1,x_2}}{n^d}\cdot f\left(\frac{i-1}{n^{d/(d+1)}}\right)\right)\cdot\mathbb{P}\left(\bigcap_{i=1}^{k_n}A_i^{x_1,x_2}\right)-1\right|}\notag\\
&\leq&\mathbb{E}\left[\left|\exp\left(\sum_{i=1}^{k_n}\frac{\#B_i^{x_1,x_2}}{n^d}\cdot f\left(\frac{i-1}{n^{d/(d+1)}}\right)\right)-\prod_{i=1}^{k_n}\mathbb{P}\left(A_i^{x_1,x_2}\,\middle|\,\mathcal{F}_{i-1}\right)^{-1}\right|\cdot\mathbf{1}\left(\bigcap_{i=1}^{k_n}A_i^{x_1,x_2}\right)\right]\label{eq:martuse},
\end{eqnarray}
and hope to get \eqref{eq:keylemestimate} from \eqref{eq:martuse}, together with \eqref{eq:presque}.
In practice, we need to implement the following truncation argument, which is reminiscent of the proof of the martingale central limit theorem (see, e.g, \cite[Chapter 15]{martingaleclt}).
For each $i\in\llbracket1,k_n\rrbracket$, consider the good event
\[G_i=\left(\frac{\#\left(\mathbb{T}_n^d\middle\backslash\mathcal{A}_n(i-1)\right)}{n^d}\geq\frac{1/2}{\sup_{s\in[0,t]}f(s)}\right).\]
Note that $G_i\in\mathcal{F}_{i-1}$, and that almost surely, on $G_i$, we have
\[\mathbb{P}\left(A_i^{x_1,x_2}\,\middle|\,\mathcal{F}_{i-1}\right)=1-\frac{\#B_i^{x_1,x_2}\cdot n^{-d}}{\#\left(\mathbb{T}_n^d\middle\backslash\mathcal{A}_n(i-1)\right)\cdot n^{-d}}\geq1-\frac{2\cdot\#\overline{B}(0,k_n)\cdot n^{-d}}{\left.(1/2)\middle/\left(\sup_{s\in[0,t]}f(s)\right)\right.}.\]
Then, consider the following truncated random variable:
\[W_i^{x_1,x_2}=\begin{cases}
\mathbf{1}\left(A_i^{x_1,x_2}\right)&\text{on $G_i$,}\\
1&\text{otherwise.}
\end{cases}\]
Almost surely, we have
\[\mathbb{E}\left[W_i^{x_1,x_2}\,\middle|\,\mathcal{F}_{i-1}\right]=\mathbb{P}\left(A_i^{x_1,x_2}\,\middle|\,\mathcal{F}_{i-1}\right)\cdot\mathbf{1}(G_i)+1\cdot(1-\mathbf{1}(G_i))\geq1-\frac{2\cdot\#\overline{B}(0,k_n)\cdot n^{-d}}{\left.(1/2)\middle/\left(\sup_{s\in[0,t]}f(s)\right)\right.}.\]
In particular, for all $n\in\mathbb{N}^*$ sufficiently large so that
\[\frac{2\cdot\#\overline{B}(0,k_n)\cdot n^{-d}}{\left.(1/2)\middle/\left(\sup_{s\in[0,t]}f(s)\right)\right.}\leq\frac{1}{2},\]
we have $\mathbb{E}\left[W_i^{x_1,x_2}\,\middle|\,\mathcal{F}_{i-1}\right]\geq1/2$ for all $i\in\llbracket1,k_n\rrbracket$.
Thus, we have the following identity:
\begin{equation}\label{eq:marttrunc}
\mathbb{E}\left[\frac{\prod_{i=1}^jW_i^{x_1,x_2}}{\prod_{i=1}^j\mathbb{E}\left[W_i^{x_1,x_2}\,\middle|\,\mathcal{F}_{i-1}\right]}\right]=1\quad\text{for all $j\in\llbracket0,k_n\rrbracket$.}
\end{equation}
This can be checked by a straightforward induction on $j$, as for \eqref{eq:mart}.
Now armed with \eqref{eq:marttrunc}, we are in position to conclude the proof of \eqref{eq:keylemestimate}:
instead of \eqref{eq:martuse}, we write
\begin{eqnarray}
\lefteqn{\left|\exp\left(\sum_{i=1}^{k_n}\frac{\#B_i^{x_1,x_2}}{n^d}\cdot f\left(\frac{i-1}{n^{d/(d+1)}}\right)\right)\cdot\mathbb{P}\left(\bigcap_{i=1}^{k_n}A_i^{x_1,x_2}\right)-1\right|}\notag\\
&\leq&\exp\left(\sum_{i=1}^{k_n}\frac{\#B_i^{x_1,x_2}}{n^d}\cdot f\left(\frac{i-1}{n^{d/(d+1)}}\right)\right)\cdot\left|\mathbb{P}\left(\bigcap_{i=1}^{k_n}A_i^{x_1,x_2}\right)-\mathbb{E}\left[\prod_{i=1}^{k_n}W_i^{x_1,x_2}\right]\right|\label{eq:lelem1stterm}\\
&&+\mathbb{E}\left[\left|\exp\left(\sum_{i=1}^{k_n}\frac{\#B_i^{x_1,x_2}}{n^d}\cdot f\left(\frac{i-1}{n^{d/(d+1)}}\right)\right)-\prod_{i=1}^{k_n}\mathbb{E}\left[W_i^{x_1,x_2}\,\middle|\,\mathcal{F}_{i-1}\right]^{-1}\right|\right].\label{eq:lelem2ndterm}
\end{eqnarray}
Now, let us control each term separately.
First, since $\mathbf{1}\left(\bigcap_{i=1}^{k_n}A_i^{x_1,x_2}\right)$ and $\prod_{i=1}^{k_n}W_i^{x_1,x_2}$ take values in $\{0,1\}$ and agree on the good event $G:=\bigcap_{i=1}^{k_n}G_i$, the first term (at \eqref{eq:lelem1stterm}) is at most
\begin{eqnarray*}
\exp\left(k_n\cdot\frac{2\cdot\#\overline{B}(0,k_n)}{n^d}\cdot\sup_{s\in[0,t]}f(s)\right)\cdot\left(1-\mathbb{P}(G)\right).
\end{eqnarray*}
This upper bound does not depend on $x_1,x_2$, and goes to zero as $n\to\infty$, since
\[\begin{split}
\mathbb{P}(G)&\geq\mathbb{P}\left(\bigcap_{i=1}^{k_n}\left(\frac{\#\left(\mathbb{T}_n^d\middle\backslash\mathcal{A}_n(i-1)\right)}{n^d}\geq\frac{1}{f\left((i-1)\cdot n^{-d/(d+1)}\right)}-\frac{1/2}{\sup_{s\in[0,t]}f(s)}\right)\right)\\
&\geq\mathbb{P}\left(\sup_{s\in[0,t]}\left|\frac{\#\left(\mathbb{T}_n^d\middle\backslash\mathcal{A}_n\left(\left\lfloor s\cdot n^{d/(d+1)}\right\rfloor\right)\right)}{n^d}-\frac{1}{f(s)}\right|\leq\frac{1/2}{\sup_{s\in[0,t]}f(s)}\right)\underset{n\to\infty}{\longrightarrow}1.
\end{split}\]
Next, to control the second term (at \eqref{eq:lelem2ndterm}), we split the expectation into two parts.
First, on the complement of $G$, we simply bound the integrand by
\[\exp\left(k_n\cdot\frac{2\cdot\#\overline{B}(0,k_n)}{n^d}\cdot\sup_{s\in[0,t]}f(s)\right)+\prod_{i=1}^{k_n}\left(1-\frac{2\cdot\#\overline{B}(0,k_n)\cdot n^{-d}}{\left.(1/2)\middle/\left(\sup_{s\in[0,t]}f(s)\right)\right.}\right)^{-1},\]
which does not depend on $x_1,x_2$, and remains bounded as $n\to\infty$.
Since $\mathbb{P}(G)\rightarrow1$ as $n\to\infty$, this part of the expectation vanishes uniformly in $x_1,x_2\in\mathbb{T}_n^d$, as desired.
To control the other part, on the good event $G$, let us estimate the difference
\begin{eqnarray*}
\lefteqn{\left|\exp\left(\sum_{i=1}^{k_n}\frac{\#B_i^{x_1,x_2}}{n^d}\cdot f\left(\frac{i-1}{n^{d/(d+1)}}\right)\right)-\prod_{i=1}^{k_n}\mathbb{E}\left[W_i^{x_1,x_2}\,\middle|\,\mathcal{F}_{i-1}\right]^{-1}\right|}\\
&=&\left|\exp\left(\sum_{i=1}^{k_n}\frac{\#B_i^{x_1,x_2}}{n^d}\cdot f\left(\frac{i-1}{n^{d/(d+1)}}\right)\right)-\prod_{i=1}^{k_n}\left(1-\frac{\#B_i^{x_1,x_2}\cdot n^{-d}}{\#\left(\mathbb{T}_n^d\middle\backslash\mathcal{A}_n(i-1)\right)\cdot n^{-d}}\right)^{-1}\right|.
\end{eqnarray*}
Recall that on $G$, we have
\[\frac{\#B_i^{x_1,x_2}\cdot n^{-d}}{\#\left(\mathbb{T}_n^d\middle\backslash\mathcal{A}_n(i-1)\right)\cdot n^{-d}}\leq\frac{2\cdot\#\overline{B}(0,k_n)\cdot n^{-d}}{\left.(1/2)\middle/\left(\sup_{s\in[0,t]}f(s)\right)\right.}\leq\frac{1}{2}\quad\text{for all $i\in\llbracket1,k_n\rrbracket$,}\]
where the last inequality holds by assumption on $n$.
Thus, we may write
\[\prod_{i=1}^{k_n}\left(1-\frac{\#B_i^{x_1,x_2}\cdot n^{-d}}{\#\left(\mathbb{T}_n^d\middle\backslash\mathcal{A}_n(i-1)\right)\cdot n^{-d}}\right)^{-1}=\exp\left(\sum_{i=1}^{k_n}\frac{\#B_i^{x_1,x_2}\cdot n^{-d}}{\#\left(\mathbb{T}_n^d\middle\backslash\mathcal{A}_n(i-1)\right)\cdot n^{-d}}+\Delta_n^{x_1,x_2}\right),\]
where
\[\Delta_n^{x_1,x_2}=\sum_{i=1}^{k_n}\left(-\ln\left(1-\frac{\#B_i^{x_1,x_2}\cdot n^{-d}}{\#\left(\mathbb{T}_n^d\middle\backslash\mathcal{A}_n(i-1)\right)\cdot n^{-d}}\right)-\frac{\#B_i^{x_1,x_2}\cdot n^{-d}}{\#\left(\mathbb{T}_n^d\middle\backslash\mathcal{A}_n(i-1)\right)\cdot n^{-d}}\right).\]
Then, we may bound $\Delta_n^{x_1,x_2}$ by using the inequality $|-\ln(1-u)-u|\leq u^2$ for all $u\in[0,1/2]$: we have
\[\left|\Delta_n^{x_1,x_2}\right|\leq\sum_{i=1}^{k_n}\left(\frac{\#B_i^{x_1,x_2}\cdot n^{-d}}{\#\left(\mathbb{T}_n^d\middle\backslash\mathcal{A}_n(i-1)\right)\cdot n^{-d}}\right)^2\leq k_n\cdot\left(\frac{2\cdot\#\overline{B}(0,k_n)\cdot n^{-d}}{\left.(1/2)\middle/\left(\sup_{s\in[0,t]}f(s)\right)\right.}\right)^2.\]
To complete the proof of \eqref{eq:keylemestimate}, it remains to show that as $n\to\infty$, we have
\[\max_{x_1,x_2\in\mathbb{T}_n^d}\mathbb{E}\left[\left|\exp\left(U_n^{x_1,x_2}\right)-\exp\left(V_n^{x_1,x_2}\right)\right|\,;\,G\right]\longrightarrow0,\]
where
\[U_n^{x_1,x_2}=\sum_{i=1}^{k_n}\frac{\#B_i^{x_1,x_2}}{n^d}\cdot f\left(\frac{i-1}{n^{d/(d+1)}}\right)\quad\text{and}\quad V_n^{x_1,x_2}=\sum_{i=1}^{k_n}\frac{\#B_i^{x_1,x_2}\cdot n^{-d}}{\#\left(\mathbb{T}_n^d\middle\backslash\mathcal{A}_n(i-1)\right)\cdot n^{-d}}+\Delta_n^{x_1,x_2}.\]
This is a straightforward consequence of the following two facts:
\begin{itemize}
\item  On the good event $G$, we have
\[U_n^{x_1,x_2}\leq k_n\cdot\frac{2\cdot\#\overline{B}(0,k_n)}{n^d}\cdot\sup_{s\in[0,t]}f(s),\]
and
\[V_n^{x_1,x_2}\leq k_n\cdot\frac{2\cdot\#\overline{B}(0,k_n)\cdot n^{-d}}{\left.(1/2)\middle/\left(\sup_{s\in[0,t]}f(s)\right)\right.}+k_n\cdot\left(\frac{2\cdot\#\overline{B}(0,k_n)\cdot n^{-d}}{\left.(1/2)\middle/\left(\sup_{s\in[0,t]}f(s)\right)\right.}\right)^2,\]
where both right-hand sides do not depend on $x_1,x_2$, and remain bounded as $n\to\infty$,

\item On the good event $G$, we have $\left|U_n^{x_1,x_2}-V_n^{x_1,x_2}\right|\leq\Delta_n$, where $\Delta_n$ does not depend on $x_1,x_2$, and converges to $0$ in probability as $n\to\infty$.
Indeed, we have
\[U_n^{x_1,x_2}-V_n^{x_1,x_2}=\sum_{i=1}^{k_n}\frac{\#B_i^{x_1,x_2}}{n^d}\cdot\left(f\left(\frac{i-1}{n^{d/(d+1)}}\right)-\frac{1}{\#\left(\mathbb{T}_n^d\middle\backslash\mathcal{A}_n(i-1)\right)\cdot n^{-d}}\right)-\Delta_n^{x_1,x_2},\]
hence
\begin{eqnarray*}
\left|U_n^{x_1,x_2}-V_n^{x_1,x_2}\right|&\leq&k_n\cdot\frac{2\cdot\#\overline{B}(0,k_n)}{n^d}\cdot\sup_{s\in[0,t]}\left|f(s)-\frac{1}{\#\left(\mathbb{T}_n^d\middle\backslash\mathcal{A}_n\left(\left\lfloor s\cdot n^{d/(d+1)}\right\rfloor\right)\right)\cdot n^{-d}}\right|\\
&&+k_n\cdot\left(\frac{2\cdot\#\overline{B}(0,k_n)\cdot n^{-d}}{\left.(1/2)\middle/\left(\sup_{s\in[0,t]}f(s)\right)\right.}\right)^2\\
&=:&\Delta_n\underset{n\to\infty}{\overset{\mathbb{P}}{\longrightarrow}}0.
\end{eqnarray*}
\end{itemize}
We have thus established \eqref{eq:keylemestimate}, which concludes the proof of the lemma.
\end{proof}

Now armed with Proposition \ref{prop:limitingdensityBnp}, we may prove the desired functional convergence \eqref{eq:functionalconvergence}.

\begin{proof}[Proof of \eqref{eq:functionalconvergence}]
As mentioned previously, we will use the coupling of the rejection sampling burning process with the intermediate burning processes $\left(\mathcal{B}_n^p(\cdot)\,;\,p\in\mathbb{N}^*\right)$: let us show that
\[\sup_{t\in[0,T[}\left|\frac{\#\left(\mathbb{T}_n^d\middle\backslash\mathcal{B}_n^*\left(\left\lfloor t\cdot n^{d/(d+1)}\right\rfloor\right)\right)}{n^d}-\frac{1}{y^{(d)}(t)}\right|\overset{\mathbb{P}}{\longrightarrow}0\quad\text{as $n\to\infty$.}\]
By a probabilistic version of Dini's theorem (see, e.g, \cite[Lemma 11.6]{chef}), it suffices to prove that for each $t\in[0,T[$, we have
\[\frac{\#\left(\mathbb{T}_n^d\middle\backslash\mathcal{B}_n^*\left(\left\lfloor t\cdot n^{d/(d+1)}\right\rfloor\right)\right)}{n^d}\overset{\mathbb{P}}{\longrightarrow}\frac{1}{y^{(d)}(t)}\quad\text{as $n\to\infty$.}\]
For each $n\in\mathbb{N}^*$, we set $k_n(t)=\left\lfloor t\cdot n^{d/(d+1)}\right\rfloor$ for all $t\in[0,T[$, for short.
For each $p\in\mathbb{N}$, we let
\[\zeta_p(t)=\varlimsup_{n\to\infty}\mathbb{E}\left[\frac{\#\left(\mathcal{B}_n^*(k_n(t))\middle\backslash\mathcal{B}_n^p(k_n(t))\right)}{n^d}\right]\quad\text{for all $t\in[0,T[$.}\]
We claim that it suffices to prove that for each $t\in[0,T[$, we have $\zeta_p(t)\rightarrow0$ as $p\to\infty$.
Indeed, by the triangle inequality, we have
\begin{eqnarray*}
\lefteqn{\left|\frac{\#\left(\mathbb{T}_n^d\middle\backslash\mathcal{B}_n^*(k_n(t))\right)}{n^d}-\frac{1}{y^{(d)}(t)}\right|}\\
&\leq&\frac{\#\left(\mathcal{B}_n^*(k_n(t))\middle\backslash\mathcal{B}_n^p(k_n(t))\right)}{n^d}+\left|\frac{\#\left(\mathbb{T}_n^d\middle\backslash\mathcal{B}_n^p(k_n(t))\right)}{n^d}-\frac{1}{f_p(t)}\right|+\left(\frac{1}{f_p(t)}-\frac{1}{y^{(d)}(t)}\right),
\end{eqnarray*}
hence
\[\varlimsup_{n\to\infty}\mathbb{E}\left[\left|\frac{\#\left(\mathbb{T}_n^d\middle\backslash\mathcal{B}_n^*(k_n(t))\right)}{n^d}-\frac{1}{y^{(d)}(t)}\right|\right]\leq\zeta_p(t)+\left(\frac{1}{f_p(t)}-\frac{1}{y^{(d)}(t)}\right),\]
where we use Proposition \ref{prop:limitingdensityBnp} to handle the second term.
By Claim \ref{claim:picard}, the last term goes to $0$ as $p\to\infty$, and therefore it suffices to show that $\zeta_p(t)\rightarrow0$, as claimed above.
In this direction, we will establish the following inequality: for each $p\in\mathbb{N}$, we have
\begin{equation}\label{eq:durham}
\zeta_{p+1}(t)\leq\frac{2^d}{d!}\cdot\int_0^t(t-s)^d\cdot f_p(s)\cdot\zeta_p(s)\mathrm{d}s\quad\text{for all $t\in[0,T[$.}
\end{equation}
To prove \eqref{eq:durham}, recall that for all $k\in\mathbb{N}$, we have (see Subsection \ref{subsec:coupling})
\[\mathcal{B}_n^*(k)=\bigcup_{i=1}^k\overline{B}\left(Y_i^*,k-i\right)\quad\text{and}\quad\mathcal{B}_n^{p+1}(k)=\bigcup_{i=1}^k\overline{B}\left(Y_i^{p+1},k-i\right).\]
In particular, we have
\[\left.\mathcal{B}_n^*(k_n(t))\middle\backslash\mathcal{B}_n^{p+1}(k_n(t))\right.\subset\bigcup_{\substack{i=1\\Y_i^*\neq Y_i^{p+1}}}^{k_n(t)}\overline{B}\left(Y_i^*,k_n(t)-i\right).\]
This yields
\[\begin{split}
\frac{\#\left(\mathcal{B}_n^*(k_n(t))\middle\backslash\mathcal{B}_n^{p+1}(k_n(t))\right)}{n^d}&\leq\sum_{i=1}^{k_n(t)}\frac{\#\overline{B}\left(Y_i^*,k_n(t)-i\right)}{n^d}\cdot\mathbf{1}\left(Y_i^*\neq Y_i^{p+1}\right)\\
&\leq\sum_{i=1}^{k_n(t)}\frac{\#\overline{B}(0,k_n(t)-i)}{n^d}\cdot\mathbf{1}\left(Y_i^{p+1}\in\left.\mathcal{B}_n^*(i-1)\middle\backslash\mathcal{B}_n^p(i-1)\right.\right).
\end{split}\]
Indeed, with the notation introduced in Subsection \ref{subsec:coupling}, the following holds: for each $i\in\llbracket1,k_n(t)\rrbracket$, on the event $\left(Y_i^*\neq Y_i^{p+1}\right)$, we must have $H_i^*>H_i^{p+1}$ (recall Claim \ref{claim:inclusionintermediate}), hence $H_i^{p+1}<\infty$ and
\[Y_i^{p+1}=X_i^{H_i^{p+1}}\in\left.\mathcal{B}_n^*(i-1)\middle\backslash\mathcal{B}_n^p(i-1)\right..\]
Now, recall from Claim \ref{claim:rejectionsamplingp} that the law of $Y_i^{p+1}$ conditionally on $\mathcal{F}_{i-1}$ is the uniform distribution on the complement of $\mathcal{B}_n^p(i-1)$, provided that $\mathcal{B}_n^p(i-1)\varsubsetneq\mathbb{T}_n^d$.
We deduce that
\[\mathbb{P}\left(Y_i^{p+1}\in\left.\mathcal{B}_n^*(i-1)\middle\backslash\mathcal{B}_n^p(i-1)\right.\,\middle|\,\mathcal{F}_{i-1}\right)\leq Z_n^p(i-1)\quad\text{almost surely,}\]
where
\[Z_n^p(i-1)=\begin{cases}
\frac{\#\left(\mathcal{B}_n^*(i-1)\middle\backslash\mathcal{B}_n^p(i-1)\right)\cdot n^{-d}}{\#\left(\mathbb{T}_n^d\middle\backslash\mathcal{B}_n^p(i-1)\right)\cdot n^{-d}}&\text{if $\mathcal{B}_n^p(i-1)\varsubsetneq\mathbb{T}_n^d$,}\\
1&\text{otherwise.}
\end{cases}\]
Summing up, we get that
\begin{eqnarray*}
\lefteqn{\mathbb{E}\left[\frac{\#\left(\mathcal{B}_n^*(k_n(t))\middle\backslash\mathcal{B}_n^{p+1}(k_n(t))\right)}{n^d}\right]}\\
&\leq&\sum_{i=1}^{k_n(t)}\frac{\#\overline{B}(0,k_n(t)-i)}{n^d}\cdot\mathbb{E}\left[Z_n^p(i-1)\right]\\
&=&\int_0^{k_n(t)}\frac{\#\overline{B}(0,k_n(t)-\lceil u\rceil)}{n^d}\cdot\mathbb{E}\left[Z_n^p(\lfloor u\rfloor)\right]\mathrm{d}u\\
&=&\int_0^{k_n(t)\cdot n^{-d/(d+1)}}\frac{\#\overline{B}\left(0,k_n(t)-\left\lceil s\cdot n^{d/(d+1)}\right\rceil\right)}{\left(n^{d/(d+1)}\right)^d}\cdot\mathbb{E}\left[Z_n^p(k_n(s))\right]\mathrm{d}s,
\end{eqnarray*}
where the last equality follows from the change of variables $u=s\cdot n^{d/(d+1)}$.
We deduce that
\begin{eqnarray*}
\zeta_{p+1}(t)&\leq&\varlimsup_{n\to\infty}\int_0^{k_n(t)\cdot n^{-d/(d+1)}}\frac{\#\overline{B}\left(0,k_n(t)-\left\lceil s\cdot n^{d/(d+1)}\right\rceil\right)}{\left(n^{d/(d+1)}\right)^d}\cdot\mathbb{E}\left[Z_n^p(k_n(s))\right]\mathrm{d}s\\
&\leq&\frac{2^d}{d!}\cdot\int_0^t(t-s)^d\cdot\varlimsup_{n\to\infty}\mathbb{E}\left[Z_n^p(k_n(s))\right]\mathrm{d}s,
\end{eqnarray*}
where we use Fatou's lemma for the second inequality.
This yields \eqref{eq:durham}, provided we prove that for all $t\in[0,T[$, we have
\begin{equation}\label{eq:p13}
\varlimsup_{n\to\infty}\mathbb{E}\left[Z_n^p(k_n(t))\right]=f_p(t)\cdot\zeta_p(t).
\end{equation}
To this end, fix $\varepsilon>0$, and consider the good event
\[G_n=\left(\text{$\mathcal{B}_n^p(k_n(t))\varsubsetneq\mathbb{T}_n^d$ and $\left|\frac{1}{\#\left(\mathbb{T}_n^d\middle\backslash\mathcal{B}_n^p(k_n(t))\right)\cdot n^{-d}}-f_p(t)\right|\leq\varepsilon$}\right).\]
We have
\begin{eqnarray*}
\lefteqn{\left|\mathbb{E}\left[Z_n^p(k_n(t))\right]-f_p(t)\cdot\mathbb{E}\left[\frac{\#\left(\mathcal{B}_n^*(k_n(t))\middle\backslash\mathcal{B}_n^p(k_n(t))\right)}{n^d}\right]\right|}\\
&\leq&\mathbb{E}\left[\left|Z_n^p(k_n(t))-f_p(t)\cdot\frac{\#\left(\mathcal{B}_n^*(k_n(t))\middle\backslash\mathcal{B}_n^p(k_n(t))\right)}{n^d}\right|\right]\\
&\leq&\mathbb{E}\left[\left|\frac{1}{\#\left(\mathbb{T}_n^d\middle\backslash\mathcal{B}_n^p(k_n(t))\right)\cdot n^{-d}}-f_p(t)\right|\cdot\frac{\#\left(\mathcal{B}_n^*(k_n(t))\middle\backslash\mathcal{B}_n^p(k_n(t))\right)}{n^d}\,;\,G_n\right]\\
&&+(1+f_p(t))\cdot(1-\mathbb{P}(G_n))\\
&\leq&\varepsilon+(1+f_p(t))\cdot(1-\mathbb{P}(G_n)),
\end{eqnarray*}
hence
\[\varlimsup_{n\to\infty}\left|\mathbb{E}\left[Z_n^p(k_n(t))\right]-f_p(t)\cdot\mathbb{E}\left[\frac{\#\left(\mathcal{B}_n^*(k_n(t))\middle\backslash\mathcal{B}_n^p(k_n(t))\right)}{n^d}\right]\right|\leq\varepsilon.\]
We thus obtain \eqref{eq:p13}, which concludes the proof of \eqref{eq:durham}.

Finally, it remains to explain how \eqref{eq:durham} implies that for each $t\in[0,T[$, we have $\zeta_p(t)\rightarrow0$ as $p\to\infty$.
Set $\zeta(t)=\varlimsup_{p\to\infty}\zeta_p(t)$ for all $t\in[0,T[$: we want to show that $\zeta(t)=0$ for all $t\in[0,T[$.
To this end, fix $\tau\in[0,T[$.
By Fatou's lemma, we get from \eqref{eq:durham} that
\[\text{$\zeta(t)\leq C\cdot\int_0^t\zeta(s)\mathrm{d}s$ \quad for all $t\in[0,\tau]$, \quad where \quad $C=\frac{2^d}{d!}\cdot\sup_{s\in[0,\tau]}(\tau-s)^d\cdot y^{(d)}(s)\in(0,\infty).$}\]
By the Grönwall lemma, this entails that $\zeta(t)=0$ for all $t\in[0,\tau]$ as desired, provided we prove that ${\zeta:[0,\tau]\rightarrow\mathbb{R}_+}$ is continuous, which we proceed to do now.
Let us first fix $p \in \mathbb{N}$: for all $s\leq t\in[0,\tau]$, we have
\begin{eqnarray*}
|\zeta_p(t)-\zeta_p(s)|&\leq&\varlimsup_{n\to\infty}\left|\mathbb{E}\left[\frac{\#\left(\mathcal{B}_n^*(k_n(t))\middle\backslash\mathcal{B}_n^p(k_n(t))\right)}{n^d}\right]-\mathbb{E}\left[\frac{\#\left(\mathcal{B}_n^*(k_n(s))\middle\backslash\mathcal{B}_n^p(k_n(s))\right)}{n^d}\right]\right|\\
&\leq&\varlimsup_{n\to\infty}\left(\mathbb{E}\left[\frac{\#\left(\mathcal{B}_n^*(k_n(t))\middle\backslash\mathcal{B}_n^*(k_n(s))\right)}{n^d}\right]+\mathbb{E}\left[\frac{\#\left(\mathcal{B}_n^p(k_n(t))\middle\backslash\mathcal{B}_n^p(k_n(s))\right)}{n^d}\right]\right)\\
&=&\varlimsup_{n\to\infty}\mathbb{E}\left[\frac{\#\left(\mathcal{B}_n^*(k_n(t))\middle\backslash\mathcal{B}_n^*(k_n(s))\right)}{n^d}\right]+\left(\frac{1}{f_p(s)}-\frac{1}{f_p(t)}\right),
\end{eqnarray*}
where we use Proposition \ref{prop:limitingdensityBnp} for the last equality.
Now, to handle the first term, we write
\begin{eqnarray*}
\lefteqn{\left.\mathcal{B}_n^*(k_n(t))\middle\backslash\mathcal{B}_n^*(k_n(s))\right.}\\
&\subset&\left(\bigcup_{i=1}^{k_n(s)}\left(\overline{B}\left(Y_i^*,k_n(t)-i\right)\middle\backslash\overline{B}\left(Y_i^*,k_n(s)-i\right)\right)\right)\cup\left(\bigcup_{i=k_n(s)+1}^{k_n(t)}\overline{B}\left(Y_i^*,k_n(t)-i\right)\right).
\end{eqnarray*}
This yields
\begin{eqnarray*}
\lefteqn{\frac{\#\left(\mathcal{B}_n^*(k_n(t))\middle\backslash\mathcal{B}_n^*(k_n(s))\right)}{n^d}}\\
&\leq&\sum_{i=1}^{k_n(s)}\frac{\#\left(\overline{B}\left(Y_i^*,k_n(t)-i\right)\middle\backslash\overline{B}\left(Y_i^*,k_n(s)-i\right)\right)}{n^d}+\sum_{i=k_n(s)+1}^{k_n(t)}\frac{\#\overline{B}\left(Y_i^*,k_n(t)-i\right)}{n^d}\\
&=&\sum_{i=1}^{k_n(s)}\frac{\#\left(\overline{B}\left(0,k_n(t)-i\right)\middle\backslash\overline{B}\left(0,k_n(s)-i\right)\right)}{n^d}+\sum_{i=k_n(s)+1}^{k_n(t)}\frac{\#\overline{B}\left(0,k_n(t)-i\right)}{n^d}.
\end{eqnarray*}
Then, by the same manipulations as we have already performed several times: letting $n\to\infty$, we obtain
\[\varlimsup_{n\to\infty}\mathbb{E}\left[\frac{\#\left(\mathcal{B}_n^*(k_n(t))\middle\backslash\mathcal{B}_n^*(k_n(s))\right)}{n^d}\right]\leq\frac{2^d}{d!}\cdot\int_0^s\left((t-r)^d-(s-r)^d\right)\mathrm{d}r+\frac{2^d}{d!}\cdot\int_s^t(t-r)^d\mathrm{d}r.\]
Thus, we get
\[|\zeta_p(t)-\zeta_p(s)|\leq\frac{2^d}{d!}\cdot\int_0^s\left((t-r)^d-(s-r)^d\right)\mathrm{d}r+\frac{2^d}{d!}\cdot\int_s^t(t-r)^d\mathrm{d}r+\left(\frac{1}{f_p(s)}-\frac{1}{f_p(t)}\right),\]
which entails that $ \zeta$ is continuous, since $|\zeta(t)-\zeta(s)|\leq\varlimsup_{p\to\infty}|\zeta_p(t)-\zeta_p(s)|$.
This completes the proof of \eqref{eq:functionalconvergence}.
\end{proof}

\subsection{Asymptotic behaviour of the random burning number}\label{subsec:asymptoticrandomburning}

In this subsection, we complete the proof of Theorem \ref{thm:rejectionsampling} by proving \eqref{eq:asymptoticrandomburning}.

\begin{proof}[Proof of \eqref{eq:asymptoticrandomburning}]
We start with the lower bound, which is an easy consequence of \eqref{eq:functionalconvergence}: for each $\varepsilon\in(0,T)$, we have
\begin{eqnarray*}
\lefteqn{\mathbb{P}\left(\frac{\tau_n^\mathsf{rs}}{n^{d/(d+1)}}\leq T-\varepsilon\right)}\\
&=&\mathbb{P}\left(\left.\mathbb{T}_n^d\middle\backslash\mathcal{B}_n^\mathsf{rs}\left(\left\lfloor(T-\varepsilon)\cdot n^{d/(d+1)}\right\rfloor\right)\right.=\emptyset\right)\\
&\leq&\mathbb{P}\left(\left|\frac{\#\left(\mathbb{T}_n^d\middle\backslash\mathcal{B}_n^\mathsf{rs}\left(\left\lfloor(T-\varepsilon)\cdot n^{d/(d+1)}\right\rfloor\right)\right)}{n^d}-\frac{1}{y^{(d)}(T-\varepsilon)}\right|\geq\frac{1}{y^{(d)}(T-\varepsilon)}\right)\underset{n\to\infty}{\longrightarrow}0.
\end{eqnarray*}
Let us now turn to the upper bound, which is more sophisticated.
We will use again the version $\mathcal{B}_n^*(\cdot)$ of the rejection sampling burning process, constructed out of the random variables $\left(X_i^h\,;\,i,h\in\mathbb{N}^*\right)$.
Let us denote by $\tau_n^*=\inf\left\{k\in\mathbb{N}:\mathcal{B}_n^*(k)=\mathbb{T}_n^d\right\}$ the corresponding random burning number.
We will show that there exists a constant $\gamma\in(0,\infty)$ such that for each $\varepsilon\in(0,T)$, we have
\begin{equation}\label{eq:asymptoticgoal}
\mathbb{P}\left(\frac{\tau_n^*}{n^{d/(d+1)}}>T+\gamma\cdot\varepsilon\right)=\mathbb{P}\left(\left.\mathbb{T}_n^d\middle\backslash\mathcal{B}_n^*\left(\left\lfloor(T+\gamma\cdot\varepsilon)\cdot n^{d/(d+1)}\right\rfloor\right)\right.\neq\emptyset\right)\underset{n\to\infty}{\longrightarrow}0.
\end{equation}
Fix $\varepsilon\in(0,T)$.
The idea is to start from \eqref{eq:functionalconvergence}, which tells us that at time $\left\lfloor(T-\varepsilon)\cdot n^{d/(d+1)}\right\rfloor$, with high probability as $n\to\infty$, only a small proportion of vertices are not burned.
We then implement a strategy to burn these vertices in time of order at most a constant times $\varepsilon\cdot n^{d/(d+1)}$ with high probability as $n\to\infty$.
Let us now describe this strategy.
Set ${r_k=\varepsilon\cdot n^{d/(d+1)}\cdot2^{-k}}$ for all ${k\in\mathbb{N}^*}$, and for all $n\in\mathbb{N}^*$ sufficiently large so that ${\left(\varepsilon\cdot n^{d/(d+1)}\right)^{2/3}\geq2}$, let
\[\ell=\left\lfloor\frac{2}{3}\cdot\log_2\left(\varepsilon\cdot n^{d/(d+1)}\right)\right\rfloor\]
be the largest integer $k\in\mathbb{N}^*$ such that ${r_k\geq\left(\varepsilon\cdot n^{d/(d+1)}\right)^{1/3}}$.
Then, consider a collection $\left(A_u\,;\,u\in\partial[\mathbf{T}]_k\right)_{k\in\llbracket1,\ell\rrbracket}$ of nested partitions of $\mathbb{T}_n^d$, indexed by a plane tree $\mathbf{T}$ with height $\ell$ (we denote by $\partial[\mathbf{T}]_k$ the set of nodes in generation $k$ of $\mathbf{T}$), constructed in such a way that the following holds: there exists a constant $C=C(d)\in(1,\infty)$ such that
\begin{itemize}
\item The subsets $\left(A_u\,;\,u\in\partial[\mathbf{T}]_1\right)$ partition $\mathbb{T}_n^d$, have diameter at most $C\cdot r_1$, and have cardinality between $r_1^d$ and $C\cdot r_1^d$,

\item For each $k\in\llbracket1,\ell-1\rrbracket$, and for each node $u\in\partial[\mathbf{T}]_k$, the subsets $\left(A_v\,;\,\text{$v$ child of $u$ in $\mathbf{T}$}\right)$ partition $A_u$, have diameter at most $C\cdot r_{k+1}$, and have cardinality between $r_{k+1}^d$ and $C\cdot r_{k+1}^d$.
\end{itemize}
We present a possible construction of such a collection ${(A_u\,;\,u\in\partial[\mathbf{T}]_k)_{k\in\llbracket1,\ell\rrbracket}}$ in Appendix \ref{sec:appendix}.
For future reference, note that for each ${k\in\llbracket1,\ell-1\rrbracket}$, and for each node $u\in\partial[\mathbf{T}]_k$, we have
\[\#\{\text{$v$ child of $u$ in $\mathbf{T}$}\}\cdot r_{k+1}^d\leq\sum_{\text{$v$ child of $u$ in $\mathbf{T}$}}\#A_v=\#A_u\leq C\cdot r_k^d,\]
hence
\begin{equation}\label{eq:childrenu}
\#\{\text{$v$ child of $u$ in $\mathbf{T}$}\}\leq C\cdot2^d.
\end{equation}

Next, fix a parameter $\alpha\in(0,\infty)$ to be adjusted later, and let $(\theta_2,\ldots,\theta_\ell)$ be a collection of independent random variables, independent of the random variables $\left(X_i^h\,;\,i,h\in\mathbb{N}^*\right)$, such that for each $k\in\llbracket1,\ell-1\rrbracket$, the random variable $\theta_{k+1}$ has Poisson distribution with parameter $\alpha\cdot r_{k+1}$.
We then introduce ``stopping'' times $\tau_1,\ldots,\tau_\ell$ as follows: we set
\[\tau_1=\left\lfloor(T-\varepsilon)\cdot n^{d/(d+1)}\right\rfloor+\lfloor C\cdot r_1\rfloor,\]
and by induction, for each $k\in\llbracket1,\ell-1\rrbracket$ such that $\tau_1,\ldots,\tau_k$ have been constructed, we let
\[\tau_{k+1}=\tau_k+\theta_{k+1}+\lfloor C\cdot r_{k+1}\rfloor.\]
For future reference, note that $\tau_\ell$ is of order at most $(T-\varepsilon+C\cdot\varepsilon+2\cdot\alpha\cdot\varepsilon)\cdot n^{d/(d+1)}$ with high probability as $n\to\infty$.
Indeed, we have
\[\begin{split}
\tau_\ell&=\tau_1+\sum_{k=1}^{\ell-1}(\tau_{k+1}-\tau_k)\\
&\leq(T-\varepsilon)\cdot n^{d/(d+1)}+C\cdot r_1+\sum_{k=1}^{\ell-1}(\theta_{k+1}+C\cdot r_{k+1})\\
&\leq(T-\varepsilon)\cdot n^{d/(d+1)}+C\cdot\sum_{k\geq1}r_k+\sum_{k=1}^{\ell-1}\theta_{k+1}\\
&=(T-\varepsilon)\cdot n^{d/(d+1)}+C\cdot\varepsilon\cdot n^{d/(d+1)}+\sum_{k=1}^{\ell-1}\theta_{k+1},
\end{split}\]
where $\sum_{k=1}^{\ell-1}\theta_{k+1}$ has distribution $\mathrm{Poisson}\left(\sum_{k=1}^{\ell-1}\alpha\cdot r_{k+1}\right)$.
In particular, the latter random variable is stochastically dominated by a $\mathrm{Poisson}$ random variable with parameter $\alpha\cdot\varepsilon\cdot n^{d/(d+1)}$, and an exponential Markov inequality argument yields the following Chernoff bound (see \cite[Section 2.2]{concentration}):
\[\mathbb{P}\left(\sum_{k=1}^{\ell-1}\theta_{k+1}\geq2\cdot\alpha\cdot\varepsilon\cdot n^{d/(d+1)}\right)\leq\exp\left(-\eta\cdot\alpha\cdot\varepsilon\cdot n^{d/(d+1)}\right),\]
where $\eta=2\ln(2)-1\in(0,\infty)$.
We thus find that
\begin{equation}\label{eq:asymptotictaul}
\mathbb{P}\left(\tau_\ell\geq(T-\varepsilon+C\cdot\varepsilon+2\cdot\alpha\cdot\varepsilon)\cdot n^{d/(d+1)}\right)\leq\exp\left(-\eta\cdot\alpha\cdot\varepsilon\cdot n^{d/(d+1)}\right)\underset{n\to\infty}{\longrightarrow}0,
\end{equation}
as desired.

We will now construct, out of the random variables $\left(X_i^h\,;\,i,h\in\mathbb{N}^*\right)$ and $(\theta_2,\ldots,\theta_\ell)$, a random subtree $\mathcal{T}$ of $\mathbf{T}$, in such a way that
\begin{equation}\label{eq:asymptotictree}
\left.\mathbb{T}_n^d\middle\backslash\mathcal{B}_n^*(\tau_k)\right.\subset\bigcup_{u\in\partial[\mathcal{T}]_k}A_u\quad\text{for all $k\in\llbracket1,\ell\rrbracket$.}
\end{equation}
In view of \eqref{eq:asymptoticgoal} and \eqref{eq:asymptotictaul}, our goal will then be to show that $\#\partial[\mathcal{T}]_\ell$ is small, in order to get that $\#\left(\mathbb{T}_n^d\middle\backslash\mathcal{B}_n^*(\tau_\ell)\right)$ is small (see below \eqref{eq:asymptoticgoaltree}).
To begin with, we set the first generation of $\mathcal{T}$ to be:
\[\partial[\mathcal{T}]_1=\left\{u\in\partial[\mathbf{T}]_1:A_u\cap\mathcal{B}_n^*\left(\left\lfloor(T-\varepsilon)\cdot n^{d/(d+1)}\right\rfloor\right)=\emptyset\right\}.\]
For future reference, note that we have
\[\#\partial[\mathcal{T}]_1\cdot r_1^d\leq\sum_{u\in\partial[\mathcal{T}]_1}\#A_u\leq\#\left(\mathbb{T}_n^d\middle\backslash\mathcal{B}_n^*\left(\left\lfloor(T-\varepsilon)\cdot n^{d/(d+1)}\right\rfloor\right)\right),\]
hence
\begin{equation}\label{eq:treegen1}
\#\partial[\mathcal{T}]_1\leq2^{d+1}\cdot\varepsilon^{-(d+1)}\cdot\frac{\#\left(\mathbb{T}_n^d\middle\backslash\mathcal{B}_n^*\left(\left\lfloor(T-\varepsilon)\cdot n^{d/(d+1)}\right\rfloor\right)\right)}{n^d}\cdot r_1.
\end{equation}
Next, since the subsets $(A_u\,;\,u\in\partial[\mathbf{T}]_1)$ have diameter at most $C\cdot r_1$, by the definition of $\tau_1$, we get
\[\bigcup_{u\in\partial[\mathbf{T}]_1\setminus\partial[\mathcal{T}]_1}A_u\subset\mathcal{B}_n^*(\tau_1).\]
Taking complements in the above display and recalling that the subsets $\left(A_u\,;\,u\in\partial[\mathbf{T}]_1\right)$ partition $\mathbb{T}_n^d$, we get
\[\left.\mathbb{T}_n^d\middle\backslash\mathcal{B}_n^*(\tau_1)\right.\subset\bigcup_{u\in\partial[\mathcal{T}]_1}A_u,\]
as desired.
Next, fix $k\in\llbracket1,\ell-1\rrbracket$, and by induction, assume that generations $1,\ldots,k$ of $\mathcal{T}$ have been constructed, in such a way that
\[\left.\mathbb{T}_n^d\middle\backslash\mathcal{B}_n^*(\tau_j)\right.\subset\bigcup_{u\in\partial[\mathcal{T}]_j}A_u\quad\text{for all $j\in\llbracket1,k\rrbracket$.}\]
We construct generation $(k+1)$ of $\mathcal{T}$ as follows.
For each $i\in\llbracket\tau_k+1,\tau_k+\theta_{k+1}\rrbracket$, we set
\[H_i=\inf\left\{h\in\mathbb{N}^*:X_i^h\in\bigcup_{u\in\partial[\mathcal{T}]_k}A_u\right\}\in\llbracket1,\infty\rrbracket,\]
and we let
\[Z_i=\begin{cases}
X_i^{H_i}&\text{if $H_i<\infty$,}\\
X_i^1&\text{otherwise.}
\end{cases}\]
Note that by construction, we have $Z_i=Y_i^*$ as soon as $Z_i\notin\mathcal{B}_n^*(i-1)$.
Indeed, if $Z_i\notin\mathcal{B}_n^*(i-1)$, then since
\[\left.\mathbb{T}_n^d\middle\backslash\mathcal{B}_n^*(i-1)\right.\subset\left.\mathbb{T}_n^d\middle\backslash\mathcal{B}_n^*(\tau_k)\right.\subset\bigcup_{u\in\partial[\mathcal{T}]_k}A_u,\]
we must have $H_i=H_i^*<\infty$, hence $Z_i=X_i^{H_i}=X_i^{H_i^*}=Y_i^*$.
We then let
\[\partial[\mathcal{T}]_{k+1}=\bigcup_{u\in\partial[\mathcal{T}]_k}\left\{\text{$v$ child of $u$ in $\mathbf{T}$}:A_v\cap\left\{Z_i\,;\,i\in\llbracket\tau_k+1,\tau_k+\theta_{k+1}\rrbracket\right\}=\emptyset\right\}.\]
Now, since the subsets ${(A_v\,;\,v\in\partial[\mathbf{T}]_{k+1})}$ have diameter at most $C\cdot r_{k+1}$, by the definition of $\tau_{k+1}$, we get
\begin{equation}\label{eq:genk+1}
\bigcup_{v\in\partial[\mathbf{T}]_{k+1}\setminus\partial[\mathcal{T}]_{k+1}}A_v\subset\mathcal{B}_n^*(\tau_{k+1}).
\end{equation}
Indeed, fix $v\in\partial[\mathbf{T}]_{k+1}\setminus\partial[\mathcal{T}]_{k+1}$.
By definition, there exists $i\in\llbracket\tau_k+1,\tau_k+\theta_{k+1}\rrbracket$ such that $Z_i\in A_v$.
Then, we have either $Z_i\in\mathcal{B}_n^*(i-1)$, or $Z_i\notin\mathcal{B}_n^*(i-1)$ and in that case $Z_i=Y_i^*\in\mathcal{B}_n^*(i)$.
Either way, we have $Z_i\in\mathcal{B}_n^*(\tau_k+\theta_{k+1})$, and thus $A_v\subset\overline{B}(Z_i,\lfloor C\cdot r_{k+1}\rfloor)\subset\mathcal{B}_n^*(\tau_{k+1})$.
Taking complements in \eqref{eq:genk+1} and recalling that the subsets $\left(A_v\,;\,v\in\partial[\mathbf{T}]_{k+1}\right)$ partition $\mathbb{T}_n^d$, we get
\[\left.\mathbb{T}_n^d\middle\backslash\mathcal{B}_n^*(\tau_{k+1})\right.\subset\bigcup_{v\in\partial[\mathcal{T}]_{k+1}}A_v,\]
which concludes the induction step.

Now that the random subtree $\mathcal{T}$ of $\mathbf{T}$ has been constructed, the rest of the proof will be devoted to showing that there exists a constant $\beta\in(0,\infty)$ such that
\begin{equation}\label{eq:asymptoticgoaltree}
\mathbb{P}\left(\#\partial[\mathcal{T}]_\ell\geq\beta\cdot r_\ell\right)\longrightarrow0\quad\text{as $n\to\infty$.}
\end{equation}
First, taking \eqref{eq:asymptoticgoaltree} for granted, let us prove \eqref{eq:asymptoticgoal}.
By \eqref{eq:asymptotictree}, we have
\[\#\left(\mathbb{T}_n^d\middle\backslash\mathcal{B}_n^*(\tau_\ell)\right)\leq\sum_{u\in\partial[\mathcal{T}]_\ell}\#A_u\leq\#\partial[\mathcal{T}]_\ell\cdot C\cdot r_\ell^d,\]
hence
\[\mathbb{P}\left(\#\left(\mathbb{T}_n^d\middle\backslash\mathcal{B}_n^*(\tau_\ell)\right)\geq\beta\cdot r_\ell\cdot C\cdot r_\ell^d\right)\leq\mathbb{P}\left(\#\partial[\mathcal{T}]_\ell\geq\beta\cdot r_\ell\right)\underset{n\to\infty}{\longrightarrow}0.\]
Now, by the definition of $\ell$, we have
\[\beta\cdot r_\ell\cdot C\cdot r_\ell^d=\beta C\cdot r_\ell^{d+1}\leq\beta C\cdot\left(2\cdot\left(\varepsilon\cdot n^{d/(d+1)}\right)^{1/3}\right)^{d+1}=\beta C\cdot\left(2\cdot\varepsilon^{1/3}\right)^{d+1}\cdot n^{d/3}.\]
Thus, since $n^{d/3}\ll\left(n^{d/(d+1)}\right)^d$ as $n\to\infty$, we get that $\beta C\cdot r_\ell^{d+1}\leq\#\overline{B}\left(0,\left\lfloor\varepsilon\cdot n^{d/(d+1)}\right\rfloor\right)$ for all sufficiently large $n\in\mathbb{N}^*$, and it follows that
\[\mathbb{P}\left(\#\left(\mathbb{T}_n^d\middle\backslash\mathcal{B}_n^*(\tau_\ell)\right)\geq\#\overline{B}\left(0,\left\lfloor\varepsilon\cdot n^{d/(d+1)}\right\rfloor\right)\right)\longrightarrow0\quad\text{as $n\to\infty$.}\]
In turn, this entails that with high probability as $n\to\infty$, we have $d\left(x;\mathcal{B}_n^*(\tau_\ell)\right)\leq2\cdot\varepsilon\cdot n^{d/(d+1)}$ for all $x\in\mathbb{T}_n^d$, hence $\tau_n^*\leq\tau_\ell+2\cdot\varepsilon\cdot n^{d/(d+1)}$.
Combining this bound with \eqref{eq:asymptotictaul}, we obtain \eqref{eq:asymptoticgoal}, with $\gamma=C+2\cdot\alpha+1$.

Finally, to complete the proof of the proposition, it remains to prove \eqref{eq:asymptoticgoaltree}.
Using \eqref{eq:blowuprate}, we set
\[\beta=2^{d+1}\cdot\sup_{\delta\in(0,T)}\left(\delta^{-(d+1)}\cdot\frac{2}{y^{(d)}(T-\delta)}\right)\in(0,\infty).\]
Then, we write
\begin{eqnarray}
\mathbb{P}\left(\#\partial[\mathcal{T}]_\ell\geq\beta\cdot r_\ell\right)&\leq&\mathbb{P}\left(\#\partial[\mathcal{T}]_1\geq\beta\cdot r_1\right)\notag\\
&&+\sum_{k=1}^{\ell-1}\mathbb{P}\left(0<\#\partial[\mathcal{T}]_k\leq\beta\cdot r_k\,;\,\#\partial[\mathcal{T}]_{k+1}\geq\beta\cdot r_{k+1}\right).\label{eq:sum}
\end{eqnarray}
The definition of $\beta$ allows us to control the first term: by \eqref{eq:treegen1}, we have
\begin{eqnarray*}
\mathbb{P}\left(\#\partial[\mathcal{T}]_1\geq\beta\cdot r_1\right)&\leq&\mathbb{P}\left(\#\partial[\mathcal{T}]_1\geq2^{d+1}\cdot\varepsilon^{-(d+1)}\cdot\frac{2}{y^{(d)}(T-\varepsilon)}\cdot r_1\right)\\
&\leq&\mathbb{P}\left(\frac{\#\left(\mathbb{T}_n^d\middle\backslash\mathcal{B}_n^*\left(\left\lfloor(T-\varepsilon)\cdot n^{d/(d+1)}\right\rfloor\right)\right)}{n^d}\geq\frac{2}{y^{(d)}(T-\varepsilon)}\right)\underset{n\to\infty}{\longrightarrow}0.
\end{eqnarray*}
Finally, it remains to handle the sum at \eqref{eq:sum}.
To this end, we will show that it is possible to adjust $\alpha$ so that for all $k\in\llbracket1,\ell-1\rrbracket$,
\begin{equation}\label{eq:chernoffcond}
\mathbb{P}\left(0<\#\partial[\mathcal{T}]_k\leq\beta\cdot r_k\,;\,\#\partial[\mathcal{T}]_{k+1}\geq\beta\cdot r_{k+1}\right)\leq\exp\left(-\eta\cdot\frac{\beta\cdot r_k}{4}\right),
\end{equation}
where again $\eta=2\ln(2)-1\in(0,\infty)$.
Taking \eqref{eq:chernoffcond} for granted, we obtain, by the definition of $\ell$:
\begin{eqnarray*}
\lefteqn{\sum_{k=1}^{\ell-1}\mathbb{P}\left(0<\#\partial[\mathcal{T}]_k\leq\beta\cdot r_k\,;\,\#\partial[\mathcal{T}]_{k+1}\geq\beta\cdot r_{k+1}\right)}\\
&\leq&\frac{2}{3}\cdot\log_2\left(\varepsilon\cdot n^{d/(d+1)}\right)\cdot\exp\left(-\eta\cdot\frac{\beta\cdot\left(\varepsilon\cdot n^{d/(d+1)}\right)^{1/3}}{4}\right)\underset{n\to\infty}{\longrightarrow}0.
\end{eqnarray*}
Therefore, to complete the proof, it remains to establish \eqref{eq:chernoffcond}.
For each $k\in\llbracket1,\ell-1\rrbracket$, let us introduce the ``stopping'' $\sigma$-algebra
\[\mathcal{G}_k=\left\{A\in\mathcal{F}:\text{$A\cap(\tau_k\leq i)\in\mathcal{F}_i\vee\sigma(\theta_2,\ldots,\theta_k)$ for all $i\in\mathbb{N}$}\right\}.\]
This definition is tailored so that:
\begin{itemize}
\item The random subset $\partial[\mathcal{T}]_k\subset\partial[\mathbf{T}]_k$ is $\mathcal{G}_k$-measurable,

\item Conditionally on $\mathcal{G}_k$, the random variables $\left(X_{\tau_k+i}^h\,;\,i,h\in\mathbb{N}^*\right)$ and $\theta_{k+1}$ are independent, and have uniform distribution on $\mathbb{T}_n^d$ and Poisson distribution with parameter $\alpha\cdot r_{k+1}$, respectively.
In particular, conditionally on $\mathcal{G}_k$, the process $\{Z_i\,;\,i\in\llbracket\tau_k+1,\tau_k+\theta_{k+1}\rrbracket\}$ is a Poisson process with intensity $\alpha\cdot r_{k+1}$ on $\bigcup_{u\in\partial[\mathcal{T}]_k}A_u$, provided that $\partial[\mathcal{T}]_k\neq\emptyset$.
\end{itemize}
We omit the details, which are elementary.
It follows from these two points that conditionally on $\mathcal{G}_k$, the random variable
\[\#\partial[\mathcal{T}]_{k+1}=\sum_{u\in\partial[\mathcal{T}]_k}\sum_{\text{$v$ child of $u$ in $\mathbf{T}$}}\mathbf{1}(A_v\cap\left\{Z_i\,;\,i\in\llbracket\tau_k+1,\tau_k+\theta_{k+1}\rrbracket\right\}=\emptyset)\]
has the same distribution as a sum of independent Bernoulli random variables with parameter
\[p_v:=\exp\left(-\alpha\cdot r_{k+1}\cdot\frac{\#A_v}{\sum_{u\in\partial[\mathcal{T}]_k}\#A_u}\right),\]
provided that $\partial[\mathcal{T}]_k\neq\emptyset$.
We deduce that almost surely, on the event $\left(0<\#\partial[\mathcal{T}]_k\leq\beta\cdot r_k\right)$, we have: for all $\lambda\in\mathbb{R}_+^*$,
\begin{eqnarray*}
\mathbb{E}\left[\exp\left(\lambda\cdot\#\partial[\mathcal{T}]_{k+1}\right)\,\middle|\,\mathcal{G}_k\right]&=&\prod_{u\in\partial[\mathcal{T}]_k}\prod_{\text{$v$ child of $u$ in $\mathbf{T}$}}\left(1+p_v\cdot\left(e^\lambda-1\right)\right)\\
&\leq&\exp\left(\sum_{u\in\partial[\mathcal{T}]_k}\sum_{\text{$v$ child of $u$ in $\mathbf{T}$}}p_v\cdot\left(e^\lambda-1\right)\right).
\end{eqnarray*}
Using \eqref{eq:childrenu}, we get that almost surely, on the event $\left(0<\#\partial[\mathcal{T}]_k\leq\beta\cdot r_k\right)$, we have
\[\mathbb{E}\left[\exp\left(\lambda\cdot\#\partial[\mathcal{T}]_{k+1}\right)\,\middle|\,\mathcal{G}_k\right]\leq\exp\left(\beta\cdot r_k\cdot C\cdot2^d\cdot p(\alpha)\cdot\left(e^\lambda-1\right)\right)\quad\text{for all $\lambda\in\mathbb{R}_+^*$,}\]
where
\[p(\alpha)=\exp\left(-\alpha\cdot r_{k+1}\cdot\frac{r_{k+1}^d}{\beta\cdot r_k\cdot C\cdot r_k^d}\right)=\exp\left(-\frac{\alpha}{2^{d+1}\cdot\beta\cdot C}\right).\]
In order to obtain \eqref{eq:chernoffcond}, we choose $\alpha$ large enough so that ${C\cdot2^d\cdot p(\alpha)\leq1/4}$.
By a conditional exponential Markov inequality argument, we obtain
\begin{eqnarray*}
\lefteqn{\mathbb{P}\left(0<\#\partial[\mathcal{T}]_k\leq\beta\cdot r_k\,;\,\#\partial[\mathcal{T}]_{k+1}\geq\beta\cdot r_{k+1}\right)}\\
&=&\mathbb{E}\left[\mathbb{P}\left(\#\partial[\mathcal{T}]_{k+1}\geq\beta\cdot r_{k+1}\,|\,\mathcal{G}_k\right)\,;\,0<\#\partial[\mathcal{T}]_k\leq\beta\cdot r_k\right]\\
&\leq&\mathbb{E}[\exp\left(-\lambda\cdot\beta\cdot r_{k+1}\right)\cdot\mathbb{E}[\exp(\lambda\cdot\#\partial[\mathcal{T}]_{k+1})\,|\,\mathcal{G}_k]\,;\,0<\#\partial[\mathcal{T}]_k\leq\beta\cdot r_k]\\
&\leq&\exp\left(-\lambda\cdot\beta\cdot r_k/2\right)\cdot\mathbb{E}\left[\exp\left(\frac{\beta\cdot r_k}{4}\cdot\left(e^\lambda-1\right)\right)\,;\,0<\#\partial[\mathcal{T}]_k\leq\beta\cdot r_k\right]\\
&\leq&\exp\left(-\lambda\cdot\beta\cdot r_k/2+\frac{\beta\cdot r_k}{4}\cdot\left(e^\lambda-1\right)\right).
\end{eqnarray*}
The optimal choice $\lambda=\ln(2)$ finally yields \eqref{eq:chernoffcond}, which concludes the proof.
\end{proof}

\bibliographystyle{siam}
\bibliography{biblio.bib}

\begin{thebibliography}{10}

\bibitem{aldous}
{\sc D.~J. Aldous}, {\em {Covering a compact space by fixed-radius or growing
  random balls}}, ALEA Latin American Journal of Probability and Mathematical
  Statistics, 19 (2022), pp.~755--767.

\bibitem{alon}
{\sc N.~Alon}, {\em {Transmitting in the $n$-dimensional cube}}, Discrete
  Applied Mathematics, 37--38 (1992), pp.~9--11.

\bibitem{bastidebonamybonatocharbitkamalipierronrabie}
{\sc P.~Bastide, M.~Bonamy, A.~Bonato, P.~Charbit, S.~Kamali, T.~Pierron, and
  M.~Rabie}, {\em {Improved Pyrotechnics: Closer to the Burning Number
  Conjecture}}, The Electronic Jounal of Combinatorics, 30 (2023).

\bibitem{bessybonatojanssenrautenbachroshanbin}
{\sc S.~Bessy, A.~Bonato, J.~Janssen, D.~Rautenbach, and E.~Roshanbin}, {\em
  {Bounds on the burning number}}, Discrete Applied Mathematics, 235 (2018),
  pp.~16--22.

\bibitem{martingaleclt}
{\sc R.~Bhattacharya and E.~Waymire}, {\em {Stationary Processes and Discrete
  Parameter Markov Processes}}, Springer, 2022.

\bibitem{blasius}
{\sc G.~Blanc and A.~Contat}, {\em {Blow-up rate of solution to generalised
  Blasius equation}}, arXiv preprint, 2502.15673 (2025).

\bibitem{concentration}
{\sc S.~Boucheron, G.~Lugosi, and P.~Massart}, {\em {Concentration
  Inequalities: A Nonasymptotic Theory of Independence}}, Oxford University
  Press, 2013.

\bibitem{camposteixeira}
{\sc A.~M. Campos and A.~Teixeira}, {\em {Covering Distributions}}, arXiv
  preprint, 2401.15208 (2024).

\bibitem{chef}
{\sc N.~Curien}, {\em {A random walk among random graphs}}.
\newblock Lecture notes, available at
  \url{https://www.imo.universite-paris-saclay.fr/~nicolas.curien/enseignement.html}.

\bibitem{devroyeeidepralat}
{\sc L.~Devroye, A.~Eide, and P.~Pra\l{}at}, {\em {Burning random trees}},
  Electronic Communications in Probability, 30 (2025), pp.~1--9.

\bibitem{greedylocal}
{\sc M.~Krivelevich, T.~Mészáros, P.~Michaeli, and C.~Shikhelman}, {\em
  {Greedy maximal independent sets via local limits}}, Random Structures \&
  Algorithms, 64 (2024), pp.~986--1015.

\bibitem{landlu}
{\sc M.~R. Land and L.~Lu}, {\em {An Upper Bound on the Burning Number of
  Graphs}}, in Algorithms and Models for the Web Graph, A.~Bonato, F.~C.
  Graham, and P.~Pra\l{}at, eds., Springer, 2016, pp.~1--8.

\bibitem{mitschepralatroshanbin}
{\sc D.~Mitsche, P.~Pra\l{}at, and E.~Roshanbin}, {\em {Burning Graphs: A
  Probabilistic Perspective}}, Graphs and Combinatorics, 33 (2017),
  pp.~449--471.

\bibitem{norinturcotte}
{\sc S.~Norin and J.~Turcotte}, {\em {The burning number conjecture holds
  asymptotically}}, Journal of Combinatorial Theory, Series B, 168 (2024),
  pp.~208--235.

\bibitem{bonatojanssenroshanbin}
{\sc E.~Roshanbin, J.~Janssen, and A.~Bonato}, {\em {How to Burn a Graph}},
  Internet Mathematics, 12 (2016).

\end{thebibliography}

\appendix

\section{Nested partitions}\label{sec:appendix}

In this appendix, we present a possible construction for the collection $(A_u\,;\,u\in\partial[\mathbf{T}]_k)_{k\in\llbracket1,\ell\rrbracket}$ of nested partitions of $\mathbb{T}_n^d$ used in the proof of \eqref{eq:asymptoticrandomburning}.
Let $m=\lfloor\log_2(n)\rfloor$, so that $2^m\leq n<2^{m+1}$.
We first devise a collection $(\Lambda_u\,;\,u\in\partial[\mathbf{U}]_k)_{k\in\llbracket0,m\rrbracket}$ of nested partitions of $\llbracket0,n-1\rrbracket^d\subset\mathbb{Z}^d$ by boxes $\Lambda_u$, indexed by the nodes $u$ of a $2^d$-ary tree $\mathbf{U}$ with height $m$, in such a way that:
\begin{itemize}
\item $\Lambda_\varnothing=\llbracket0,n-1\rrbracket^d$,

\item For each $k\in\llbracket0,m-1\rrbracket$, and for each node $u\in\partial[\mathbf{U}]_k$, the boxes $(\Lambda_v\,;\,\text{$v$ child of $u$ in $\mathbf{U}$})$ partition $\Lambda_u$,

\item For each $k\in\llbracket0,m\rrbracket$, and for each node $u\in\partial[\mathbf{U}]_k$, the box $\Lambda_u=\prod_{i=1}^d\llbracket a_i(u),b_i(u)\rrbracket$ has side lengths $2^{m-k}\leq b_i(u)-a_i(u)+1\leq2^{m-k+1}$.
\end{itemize}
We obtain this collection $(\Lambda_u\,;\,u\in\partial[\mathbf{U}]_k)_{k\in\llbracket0,m\rrbracket}$ by recursively splitting boxes with the following procedure: given an integer $l\in\mathbb{N}^*$ and a box $\Lambda=\prod_{i=1}^d\llbracket a_i,b_i\rrbracket$ with side lengths ${2^l\leq b_i-a_i+1\leq2^{l+1}}$, we split $\Lambda$ into $2^d$ boxes, by splitting for each $i\in\llbracket1,d\rrbracket$ the interval $\llbracket a_i,b_i\rrbracket$ into an interval $\llbracket a_i,a_i+\lceil(b_i-a_i+1)/2\rceil-1\rrbracket$ with side length $2^{l-1}\leq\lceil(b_i-a_i+1)/2\rceil\leq2^l$, and an interval $\llbracket a_i+\lceil(b_i-a_i+1)/2\rceil,b_i\rrbracket$ with side length $2^{l-1}\leq\lfloor(b_i-a_i+1)/2\rfloor\leq2^l$.

Finally, we derive the desired collection $(A_u\,;\,u\in\partial[\mathbf{T}]_k)_{k\in\llbracket1,\ell\rrbracket}$ from ${(\Lambda_u\,;\,u\in\partial[\mathbf{U}]_k)_{k\in\llbracket0,m\rrbracket}}$ as follows.
Let us set $h=\left\lfloor\log_2\left((2\varepsilon)^{-1}\cdot n^{1/(d+1)}\right)\right\rfloor$, so that
\begin{equation}\label{eq:defh}
2^{m-h}\geq\frac{n/2}{(2\varepsilon)^{-1}\cdot n^{1/(d+1)}}=\varepsilon\cdot n^{d/(d+1)}\quad\text{and}\quad2^{m-h}\leq\frac{n}{\left.\left((2\varepsilon)^{-1}\cdot n^{1/(d+1)}\right)\middle/2\right.}=4\cdot\varepsilon\cdot n^{d/(d+1)}.
\end{equation}
Then, for all $n\in\mathbb{N}^*$ sufficiently large so that $h\geq0$ and $h+\ell\leq m$, we identify, for each $k\in\llbracket1,\ell\rrbracket$, generation $k$ of $\mathbf{T}$ with generation $h+k$ of $\mathbf{U}$.
Moreover, for each $k\in\llbracket1,\ell\rrbracket$, and for each node $u\in\partial[\mathbf{T}]_k$, we set $A_u$ to be the canonical projection of $\Lambda_u\subset\mathbb{Z}^d$ in $\mathbb{T}_n^d=\left.\mathbb{Z}^d\middle/n\mathbb{Z}^d\right.$.
By construction, the subset $A_u\subset\mathbb{T}_n^d$ has diameter at most $d\cdot2^{m-(h+k)+1}$, and has cardinality between $\left(2^{m-(h+k)}\right)^d$ and $\left(2^{m-(h+k)+1}\right)^d$.
This yields the desired result, since by \eqref{eq:defh} there exists a constant $C=C(d)\in(1,\infty)$ such that $d\cdot2^{m-(h+k)+1}\leq C\cdot r_k$, and $\left(2^{m-(h+k)}\right)^d\geq r_k^d$ and $\left(2^{m-(h+k)+1}\right)^d\leq C\cdot r_k^d$.
In fact, we find that $C=8^d$ works.

\end{document}